\newcommand{\no}[1]{#1}
\renewcommand{\no}[1]{}
	\renewcommand{\Delta}{\upDelta}}
\newtheorem{Thm}{Theorem}[section]
\newtheorem{prop}{Proposition}[section]
\newtheorem{lem}{Lemma}[section]
\newtheorem{definition}{Definition}[section]
\newtheorem{corollary}{Corollary}[section]
\newtheorem{assumption}{Assumption}[section]
\newtheorem{example}{Example}[section]
\theoremstyle{remark}
\newtheorem{remark}{Remark}[section]
\newcommand{\pd}{\partial}
\newcommand{\R}{{\mathbb R}}
\renewcommand{\leq}{\leqslant}
\renewcommand{\geq}{\geqslant}
\def\beq{\begin{equation}}
	\def\eeq{\end{equation}}
\newcommand{\bea}{\begin{eqnarray}}
	\newcommand{\eea}{\end{eqnarray}}
\newcommand{\beas}{\begin{eqnarray*}}
	\newcommand{\eeas}{\end{eqnarray*}}
\providecommand{\abs}[1]{\left\lvert#1\right\rvert}
\providecommand{\norm}[1]{\left\lVert#1\right\rVert}
\newcommand{\ccdot}{\,\cdot\,}
\newcommand{\s}{\hspace{0.5pt}}
\numberwithin{equation}{section}
\title[Inverse problems for a semilinear reaction--diffusion equation]
{
	On determining and breaking the gauge class in inverse problems for reaction-diffusion equations 
}
\author[Y. Kian]{Yavar Kian}
\address{Aix Marseille Univ, Université de Toulon, CNRS, CPT, Marseille, France}
\curraddr{}
\email{yavar.kian@univ-amu.fr}
\subjclass[2010]{Primary: 35R30, Secondary: 35K10, 35K55}
\author[T. Liimatainen]{Tony Liimatainen}
\address{Department of Mathematics and Statistics, University of Helsinki, Helsinki, Finland}
\curraddr{}
\email{tony.liimatainen@helsinki.fi}
\author[Y.-H. Lin]{Yi-Hsuan Lin}
\address{Department of Applied Mathematics, National Yang Ming Chiao Tung University, Hsinchu, Taiwan}
\curraddr{}
\email{yihsuanlin3@gmail.com}
\begin{document}
	
	\begin{abstract}
		We investigate an inverse boundary value problem of determination of a nonlinear law for reaction-diffusion processes, which are modeled by general form semilinear parabolic equations. 
		We do not assume that any solutions to these equations are known a priori, in which case the problem has a well known gauge symmetry. 
		We determine, under additional assumptions, the semilinear term up to this symmetry in a time-dependent anisotropic case modeled on Riemannian manifolds, and for partial data measurements on $\R^n$.  
		
		Moreover, we present cases where it is possible to exploit  the nonlinear interaction to break the gauge symmetry. This leads to full determination results of the nonlinear term. As an application, we show that it is possible to give a full resolution to classes of inverse source problems of determining a source term and nonlinear terms simultaneously. This is in strict contrast to inverse source problems for corresponding linear equations, which always have the gauge symmetry. We also consider a Carleman estimate with boundary terms based on intrinsic properties of parabolic equations. 

		\medskip
		
		\noindent{\bf Keywords.} Inverse problems, inverse source problems, gauge invariance, reaction diffusion equations, geometrical optics solutions, Carleman estimates, higher order linearization, simultaneous determination.

	\end{abstract}
	\maketitle
	
	\tableofcontents


	\section{Introduction}\label{sec_intro}

	In this article we address the following question: "Is it possible to determine the non-linear law of a reaction-diffusion process by applying sources and measuring the corresponding flux on the boundary of the domain of diffusion?". Mathematically, 
	the question can be stated as a problem of determination of a general semilinear term appearing in a semilinear parabolic equation from boundary measurements.
	
	Let us explain the problem  precisely. Let $T>0$ and let $\Omega\subset \R^n$, with $n\geq 2$, be a bounded and connected domain with a smooth boundary. 
	Let $a:=(a_{ik})_{1 \leq i,k \leq n} \in  C^\infty([0,T]\times\overline{\Omega};\R^{n\times n})$
	be symmetric matrix field, 
	$$ 
	a_{ik}(t,x)=a_{ki}(t,x),\ x \in \Omega,\ i,k = 1,\ldots,n, \ (t,x)\in [0,T]\times\overline{\Omega},
	$$
	which fulfills the following ellipticity condition: there exists a constant
	$c>0$ such that
	\begin{align}\label{ell}
		\sum_{i,k=1}^n a_{ik}(t,x) \xi_i \xi_k \geq c |\xi|^2, \quad 
		\mbox{for each $(t,x)\in [0,T]\times\overline{\Omega},\ \xi=(\xi_1,\ldots,\xi_n) \in \R^n$}.
	\end{align}
	We define elliptic operators $\mathcal A(t)$, $t\in [0,T]$, in divergence form  by
	$$ 
	\mathcal A(t) u(t,x) :=-\sum_{i,k=1}^n \partial_{x_i} 
	\left( a_{ik}(t,x) \partial_{x_k} u(t,x) \right),\  x\in\overline{\Omega},\ t\in[0,T]. 
	$$ 
	Throughout the article, we set 
	$$
	Q:= (0,T) \times \Omega, \quad \Sigma := (0,T) \times \partial\Omega
	$$
	and we refer to $\Sigma$ as the lateral boundary. Let us also fix
	$\rho \in C^\infty([0,T]\times\overline{\Omega};\R_+)$ and and $b\in C^\infty([0,T]\times\overline{\Omega}\times\R)$. Here $\R_+=(0,+\infty)$.
	Then, we consider  the following initial boundary value problem (IBVP in short):
	\begin{align}\label{eq1}
		\begin{cases}
			\rho(t,x) \partial_t u(t,x)+\mathcal A(t)  u(t,x)+ b(t,x,u(t,x))  =  0, & (t,x)\in 
			Q,\\
			u(t,x)  =  f(t,x), & (t,x) \in \Sigma, \\  
			u(0,x)  =  0, & x \in \Omega.
		\end{cases}
	\end{align}
	The parabolic \emph{Dirichlet-to-Neumann} map (DN map in short) is formally defined by 
	$$
	\mathcal N_{b}:   f\mapsto \left.\partial_{\nu(a)} u \right|_\Sigma,
	$$
	where $u$ is the solution to \eqref{eq1}. Here the conormal derivative $\partial_{\nu(a)}$ associated to the coefficient $a$  is defined by
	$$\partial_{\nu(a)}v(t,x):=\sum_{i,k=1}^na_{ik}(t,x)\partial_{x_k}v(t,x)\nu_i(x),\quad (t,x)\in\Sigma,$$
	where $\nu=(\nu_1,\ldots,\nu_n)$ denotes the outward unit normal vector of $\partial\Omega$ with respect to the Euclidean $\R^n$ metric. The solution $u$ used in the definition of the DN map $\mathcal{N}_b$ is unique in a specific sense so that there is no ambiguity in the definition of $\mathcal{N}_b$. For this fact and a rigorous definition of the DN map we refer to  to Section \ref{sec_prelim}.  We write simply $\partial_{\nu}=\partial_{\nu(a)}$ in the case $a$ is the $n\times n$ identity matrix $\mathrm{Id}_{\R^{n\times n}}$. 
	The inverse problem we study is the following.
	\begin{itemize}
		\item \textbf{Inverse problem (IP):} Can we recover the semilinear term $b$ from the knowledge of the parabolic Dirichlet-to-Neumann map $\mathcal N_{b}$?
	\end{itemize}
	
	Physically,  reaction diffusion equations of the form \eqref{eq1} describe several classes of diffusion processes with applications in chemistry, biology, geology, physics  and ecology. This includes the spreading of biological populations  \cite{Fi}, the Rayleigh-B\'enard convection  \cite{NW} or models appearing in  combustion theory  \cite{volpert2014elliptic, ZF}. The inverse problem (IP) is equivalent to  the determination of an underlying physical law of a diffusion process, described by the nonlinear expression  $b$ in \eqref{eq1}, by applying different  sources (e.g. heat sources) and measuring the corresponding flux at the lateral  boundary $\Sigma$. The information extracted from this way is encoded into the  DN map  $\mathcal N_{b}$.
	
	These last decades, problems of parameter identification in nonlinear partial differential equations  have received   a large interest in the mathematical community. Among the different formulation of these inverse problems, the determination of a nonlinear law  is one of the most challenging from the sever ill-posedness and nonlinearity of the problem. For diffusion equations, one of the first results in that direction can be found in \cite{Is1}. Later on this result was improved by \cite{CK}, where the stability issue was also considered. To the best of our knowledge, the most general and complete 
	result known so far about the determination of a semilinear term of the form $b(t,x,u)$ depending simultaneously on the time variable $t$, the space variable $x$ and the solution of the equation $u$ from knowledge of the parabolic DN map $\mathcal N_{b}$ can be found in \cite{KiUh}.
	Without being exhaustive, we mention  the works of \cite{IS3,CY,COY} devoted to the determination of semilinear terms depending only on the solution and the determination of quasilinear terms addressed in \cite{CK1,EPS,FKU}. Finally, we mention    the works of 
	\cite{FO20,KLU,LLLS,LLLS2019partial, LLST2022inverse,KU,KU2019partial,FLL2021inverse,harrach2022simultaneous,KKU,CFKKU,FKU,LL2020inverse,lin2020monotonicity,KiUh,LL22inverse,lai2019global}
	devoted to similar problems for elliptic and hyperbolic equations. Moreover, in the recent works \cite{LLLZ2021simultaneous,LLL2021determining}, the authors investigated simultaneous determination problems of coefficients and initial data for both parabolic and hyperbolic equations.

	Most of the above mentioned results concern the inverse problem (IP) under the assumption that the semilinear term $b$ in \eqref{eq1} satisfies the  condition 
	\begin{equation}\label{con}
		b(t,x,0)=0,\quad (t,x)\in Q.
	\end{equation}
	This condition implies that \eqref{eq1} has at least one known solution, the trivial solution. 
	In the same spirit, for any constant $\lambda\in \R$, the condition $
	b(t,x,\lambda)=0$ for $(t,x)\in Q$
	implies that the constant function $(t,x)\mapsto\lambda$ is a solution of the equation $\rho(t,x) \partial_t u+\mathcal A(t)  u+ b(t,x,u)  =  0$ in $Q$. In the present article, we treat the determination of general class of semilinear terms for which the condition \eqref{con} may not be fulfilled.
	In this case, the problem (IP) is even more challenging since no solutions  of \eqref{eq1} may not be known a priori. In fact, as observed in \cite{Sun2} for elliptic equations, there is an obstruction, a \emph{gauge symmetry}, to the determination of $b$ from the knowledge of $\mathcal N_b$. We demonstrate the gauge symmetry first in the form of an example. 
	\begin{example}\label{rmk: example}
		Let us consider the inverse problem (IP) for the simplest linear case 
		\begin{align}\label{linear u ex}
			\begin{cases}
				\partial_t u -\Delta u =b_0  &\text{ in }Q, \\
				u=f& \text{ on }\Sigma,\\
				u(0,x)=0 &\text{ in }\Omega,
			\end{cases}
		\end{align}
		where the aim is to recover an unknown source term $b_0=b_0(x,t)$. Here $\Delta$ is the Laplacian, but it could also be replaced by a more general second order elliptic operator whose coefficients are known. 
		Let us consider a function $\varphi\in  C^{\infty}([0,T]\times\overline{\Omega})$, which satisfies  $\varphi\not\equiv0$,  $\varphi(0,x)=0$ for $x\in\Omega$ and $\varphi=\partial_{\nu} \varphi=0$ on $\Sigma$, where $\partial_\nu \varphi$ denotes the Neumann derivative of $\varphi$ on $\Sigma$.
		
		
		Then the function $\tilde u:=u+\varphi$ satisfies 
		\begin{align}\label{linear tilde u ex}
			\begin{cases}
				\partial_t \tilde u -\Delta \tilde u =b_0+(\partial_t -\Delta)\varphi  &\text{ in }Q, \\
				\tilde u=f& \text{ on }\Sigma,\\
				\tilde u(0,x)=0 &\text{ in }\Omega.
			\end{cases}
		\end{align}
		Since $u$ and $\tilde u$ have the same initial data at $t=0$ and Cauchy data on $\Sigma$, we see that the DN maps of \eqref{linear u ex} and \eqref{linear tilde u ex} are the same.
		However, by unique continuation properties for parabolic equations \emph{(}see e.g. \cite[Theorem 1.1]{SaSc}\emph{)}, the conditions  $\varphi=\partial_{\nu} \varphi=0$ on $\Sigma$ and $\varphi\not\equiv0$ imply that  $(\partial_t -\Delta)\varphi\not\equiv0$, and it follows that $b_0+(\partial_t -\Delta)\varphi\neq b_0$.   Consequently, the inverse problem (IP) can not be uniquely solved.
	\end{example}

	In what follows, we assume that $\alpha>0$ and refer to the definitions of various function spaces that will show up to Section \ref{sec_prelim}. Let us  describe the gauge symmetry, or gauge invariance, of the inverse problem (IP) in detail.  For this, let a function $\varphi\in   C^{1+\frac{\alpha}{2},2+\alpha}([0,T]\times\overline{\Omega})$ again satisfy 
	\begin{align}\label{gauge1}
		\varphi(0,x)=0,\quad x\in\Omega,\quad \varphi(t,x)=\partial_{\nu(a)} \varphi(t,x)=0,\quad (t,x)\in\Sigma,
	\end{align}
	and consider the mapping $S_\varphi$ from $ C^\infty(\R;C^{\frac{\alpha}{2},\alpha}([0,T]\times\overline{\Omega}))$ into itself 
	defined by 
	\begin{align}\label{gauge2}
		S_\varphi b(t,x,\mu)=b(t,x,\mu+\varphi(t,x))+\rho(t,x) \partial_t \varphi(t,x)+\mathcal A(t)  \varphi(t,x),\quad (t,x,\mu)\in Q\times\R.
	\end{align}
	As in the example above, one can easily check that $\mathcal N_{b}=\mathcal N_{S_\varphi b}$.

	In view of this obstruction, the inverse problem (IP) should be reformulated as a problem about determining the semilinear term up to the gauge symmetry described by \eqref{gauge2}. We note that \eqref{gauge2} implies  an equivalence relation for functions in $ C^\infty(\R;C^{\frac{\alpha}{2},\alpha}([0,T]\times\overline{\Omega}))$. Corresponding equivalence classes will be called gauge classes:
	\begin{definition}[Gauge class]
		We say that two nonlinearities $b_1,b_2\in C^\infty(\R;C^{\frac{\alpha}{2},\alpha}([0,T]\times\overline{\Omega})$ are in the same gauge class, or equivalent up to a gauge, if there is $\varphi\in   C^{1+\frac{\alpha}{2},2+\alpha}([0,T]\times\overline{\Omega})$ satisfying \eqref{gauge1} 
	such that
	\begin{equation}\label{eq:gauge_equivalence}
		b_1=S_\varphi b_2.
	\end{equation} 
	Here $S_\varphi$ is as in \eqref{gauge2}. 
	In the case we consider a partial data inverse problem, where the normal derivative of solutions is assumed to be known only on $(0,T)\times \tilde \Gamma$, with $\tilde \Gamma$ open in $\partial\Omega$, we assume  $\partial_{\nu(a)} \varphi=0$ only on $(0,T)\times \tilde \Gamma$ in \eqref{gauge1}.
	
\end{definition}
Using the above definition of gauge the invariance and taking into account the obstruction described above, we reformulate the inverse problem (IP) as follows.
\begin{itemize}
	\item \textbf{Inverse problem (IP1):} Can we determine the gauge class of the semilinear term $b$ from the full or a partial knowledge  of the parabolic DN map $\mathcal N_{b}$?
\end{itemize}

There is a natural additional question raised by (IP1), namely
\begin{itemize}
	\item \textbf{Inverse problem (IP2):}  When does the gauge invariance break leading to the full resolution of problem (IP)?
\end{itemize}
One can easily check  that it is possible to give a positive answer to problem (IP2)  when \eqref{con} is fulfilled. Nevertheless, as observed in the recent work \cite{liimatainen2022uniqueness}, the resolution of problem (IP2) is not restricted to such a situation. The work \cite{liimatainen2022uniqueness} provided the first examples (in an elliptic setting) how to use nonlinearity as a tool to break the gauge invariance of (IP). 

Let us also remark that, following Example \ref{rmk: example}, when $u\mapsto b(\ccdot,u)$ is affine, corresponding to the case where the equation \eqref{eq1} is linear and has a source term, there is no hope to break the gauge invariance \eqref{gauge2}  and the response to (IP2) is in general negative. As will be observed in this article this is no longer the case for various classes of nonlinear terms $b$. We will present cases where we will be able to solve (IP) uniquely. These cases present new instances where nonlinear interaction can be helpful in inverse problems. Nonlinearity has earlier been observed to be a helpful tool by many authors in different situations such as in partial data inverse problems and in anisotropic inverse problems on manifolds, see e.g. \cite{KLU,FO20,LLLS,LLLS2019partial, LLST2022inverse,KU2019partial,KU,FLL2021inverse}. 

In the present article we will address both problems (IP1) and (IP2). We will start by considering the problem (IP1) for nonlinear terms, which are quite general. Then, we will exhibit several general situations where the gauge invariance breaks and give an answer to (IP2).

We mainly restrict our analysis to semilinear terms $b$ subjected to the condition that the map $u\mapsto b(\ccdot,u)$ is analytic (the $(t,x)$ dependence is of $b(t,x,\mu)$ will not be assumed to be analytic). The restriction to this class of nonlinear terms is motivated by the study of the challenging problems (IP1) and (IP2) in this article. Indeed, even when condition \eqref{con} is fulfilled, the problem (IP) is still open in general for semilinear terms that are not subjected to our analyticity condition (see \cite{KiUh} for the most complete results known so far for this problem). Results for semilinear elliptic equations have also been proven for cases when the nonlinear terms are roughly speaking globally Lipschitz (see e.g.  \cite{Is4,Is5}).  For this reason, our assumptions seem reasonable for tackling problems (IP1) and (IP2) and giving the first answers to these challenging problems. Note also that the linear part of \eqref{eq1} will be associated with a general class of linear parabolic equations with variable time-dependent coefficients.  Consequently, we will present results also for linear equations with full and partial data measurements.

\section{Main results}\label{sec_results}	
In this section, we will first introduce some preliminary definitions and results required for the rigorous formulation of our problem (IP). Then, we will state our main results for problems (IP1) and (IP2). 

\subsection{Preliminary properties}\label{sec_prelim}

From now on, we fix $\alpha\in(0,1)$ and we denote   by
$C^{\frac{\alpha}{2},\alpha}([0,T]\times X)$, with $X=\overline{\Omega}$ or $X=\partial\Omega$,  the set of functions $h$ lying in  $ C([0,T]\times  X)$ satisfying
$$[h]_{\frac{\alpha}{2},\alpha}=\sup\left\{\frac{|h(t,x)-h(s,y)|}{(|x-y|^2+|t-s|)^{\frac{\alpha}{2}}}:\, (t,x),(s,y)\in [0,T]\times X,\ (t,x)\neq(s,y)\right\}<\infty.$$
Then we define the space $ C^{1+\frac{\alpha}{2},2+\alpha}([0,T]\times X)$ as the set of functions $h$ lying in 
$$C([0,T];C^2(X))\cap C^1([0,T];C(X))$$ 
such that $$\partial_th,\partial_x^\beta h\in C^{\frac{\alpha}{2},\alpha}([0,T]\times X),\quad \beta\in(\mathbb N\cup\{0\})^n,\ |\beta|=2.$$
We consider these spaces with their usual norms and we refer to \cite[pp. 4]{Ch} for more details. 	Let us also introduce the space
$$\mathcal K_0:=\left\{h\in C^{1+\frac{\alpha}{2},2+\alpha}([0,T]\times\partial\Omega):\, h(0,\ccdot)=\partial_th(0,\ccdot)= 0\right\}.$$
If $r>0$ and  $h\in \mathcal K_0$, we denote by 
\begin{align}\label{D-data ball}
	\mathbb B(h,r):= \left\{ g\in \mathcal{K}_0:\, \norm{g-h}_{C^{1+\frac{\alpha}{2},2+\alpha}([0,T]\times\partial\Omega)}<r \right\},
\end{align}
the ball centered at $h$ and of radius $r$ in the space $\mathcal K_0$. We assume also that $b$ fulfills the following condition
\begin{align}\label{b}
	b(0,x,0)=0,\quad x\in\partial\Omega.
\end{align}

In this article we assume that there exists $f_0\in \mathcal K_0$ such that \eqref{eq1} admits a unique solution when $f=f_0$. According to \cite[Theorem 6.1, pp. 452]{LSU}, \cite[Theorem 2.2, pp. 429]{LSU}, \cite[Theorem 4.1, pp. 443]{LSU}, \cite[Lemma 3.1, pp. 535]{LSU} and \cite[Theorem 5.4, pp. 448]{LSU}, the problem \eqref{eq1} is well-posed  for any $f=f_0\in \mathcal K_0$ if for instance  there exist $c_1,c_2\geq0$ such that the semilinear term $b$ satisfies the following sign condition 
$$ b(t,x,\mu)\mu\geq -c_1\mu^2-c_2,\quad t\in[0,T],\ x\in\overline{\Omega},\ \mu\in\R.$$

The unique existence of solutions  of \eqref{eq1} for some $f\in \mathcal K_0$ is not restricted to such situation. Indeed, assume that there exists $\psi\in  C^{1+\frac{\alpha}{2},2+\alpha}([0,T]\times\overline{\Omega})$ satisfying 
\begin{align*}
	\begin{cases}
		\rho(t,x) \partial_t \psi(t,x)+\mathcal A(t)  \psi(t,x)  =  0 & \text{ in } 
		Q,\\ 
		\psi(0,x)  =  0 & \text{ for }x \in \Omega,
	\end{cases}
\end{align*}
such that $b(t,x,\psi(t,x))=0$ for $(t,x)\in[0,T]\times\overline{\Omega}$.  Then, one can easily check that  \eqref{eq1} admits a unique solution when $f=\psi|_{\Sigma}$. Moreover, applying Proposition \ref{p1}, we deduce that  \eqref{eq1} will be well-posed
when $f\in \mathcal K_0$ is sufficiently close to $\psi|_{\Sigma}$ in the sense of $C^{1+\frac{\alpha}{2},2+\alpha}([0,T]\times\partial\Omega)$.

As  will be shown in Proposition \ref{p1}, the existence of $f_0\in \mathcal K_0$ such that \eqref{eq1} admits a solution  when $f=f_0$  implies that there exists $\epsilon>0$, depending on  $a$, $\rho$, $b$, $f_0$, $\Omega$, $T$, such that, for all $f\in \mathbb B(f_0,\epsilon)$,  the problem \eqref{eq1} admits a solution $u_f\in  C^{1+\frac{\alpha}{2},2+\alpha}([0,T]\times\overline{\Omega})$, which is unique in a sufficiently small neighborhood of the solution of \eqref{eq1}  with boundary value $f=f_0$.
Using these  properties, we can define the parabolic DN map 
\begin{align}\label{DN map}
	\mathcal N_{b}:\mathbb B(f_0,\epsilon)\ni f\mapsto \partial_{\nu(a)} u_f(t,x),\quad (t,x)\in\Sigma.
\end{align}

\subsection{Resolution of (IP1)}\label{sec 2.2}
We present our first main results about recovering the gauge class of a semilinear term from the corresponding DN map.

We consider $\Omega_1$ to be an open bounded, smooth and connected subset of $\R^n$ such that $\overline{\Omega}\subset\Omega_1$. We extend $a$ and $\rho$ into functions defined smoothly on $[0,T]\times\overline{\Omega}_1$ satisfying $\rho>0$ and condition \eqref{ell} with $\Omega$ replaced by $\Omega_1$. For all $t\in[0,T]$, we set also 
$$
g(t):=\rho(t,\ccdot)a(t,\ccdot)^{-1},
$$ 
and we consider the compact  Riemannian manifold with boundary $(\overline{\Omega}_1,g(t))$. 

\begin{assumption}\label{assump simple}
	Throughout this article, we assume that $\left(\overline{\Omega}_1,g(t)\right)$ is a simple Riemannian manifold  for all $t\in[0,T]$. That is, we assume that for any point $x\in \overline{\Omega}_1$ the exponential map $\exp_x$ is a diffeomorphism from some closed neighborhood of $0$ in $T_x\s \overline{\Omega}_1$ onto $\overline{\Omega}_1$ and $\partial\Omega_1$ is strictly convex. 
\end{assumption}

From now on, for any Banach space $X$,  we denote by $\mathbb A(\R;X)$ the set of analytic functions on $\R$ as maps taking values in $X$. That is, for any $b\in \mathbb A(\R;X)$ and $\mu\in \R$, $b$ has convergent $X$-valued Taylor series on a neighborhood of $\mu$ . 

\begin{Thm}\label{t1} Let $a:=(a_{ik})_{1 \leq i,k \leq n} \in C^\infty([0,T]\times\overline{\Omega};\R^{n\times n})$ satisfy \eqref{ell} and 
	$\rho \in C^\infty([0,T]\times\overline{\Omega};\R_+)$. Let $b_j\in\mathbb A(\R;C^{\frac{\alpha}{2},\alpha}([0,T]\times\overline{\Omega}))$, which satisfies \eqref{b} as $b=b_j$, for $j=1,2$. We also assume that there exists $f_0\in \mathcal K_0$ such that problem \eqref{eq1}, with $f=f_0$ and $b=b_j$, admits a unique solution for $j=1,2$. 
	Then,  the condition
	\begin{align}\label{t1c}
		\mathcal N_{b_1}(f)=\mathcal N_{b_2}(f),\quad f\in \mathbb B(f_0,\epsilon), 
	\end{align}
	implies that there exists  $\varphi\in   C^{1+\frac{\alpha}{2},2+\alpha}([0,T]\times\overline{\Omega})$ satisfying \eqref{gauge1} such that
	\begin{align}\label{t1d}
		b_1=S_\varphi b_2
	\end{align}
	with $S_\varphi$ the map defined by \eqref{gauge2}.
\end{Thm}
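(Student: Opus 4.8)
The plan is to fix the gauge by means of the two reference solutions, thereby reducing the problem to the situation of a \emph{common} solution, and then to recover the Taylor coefficients of the two nonlinearities along this solution by a higher order linearization whose linear steps are handled with geometric optics solutions and the geodesic ray transform on the manifolds $(\overline\Omega_1,g(t))$.

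First I would set up the gauge fixing. Let $u_j\in C^{1+\frac\alpha2,2+\alpha}([0,T]\times\overline\Omega)$ be the unique solution of \eqref{eq1} with $b=b_j$ and $f=f_0$, for $j=1,2$. Since $u_1,u_2$ share the Dirichlet data $f_0$ on $\Sigma$ and the zero initial data, and since \eqref{t1c} applied to $f=f_0$ gives $\partial_{\nu(a)}u_1=\partial_{\nu(a)}u_2$ on $\Sigma$, the function $\varphi:=u_2-u_1$ lies in $C^{1+\frac\alpha2,2+\alpha}([0,T]\times\overline\Omega)$ and satisfies \eqref{gauge1}. Set $\hat b_2:=S_\varphi b_2$, with $S_\varphi$ as in \eqref{gauge2}. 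A direct computation using \eqref{gauge2} shows that $u_2-\varphi=u_1$ solves \eqref{eq1} with $b=\hat b_2$ and $f=f_0$, so $u_1$ is a common solution of the equations associated with $b_1$ and $\hat b_2$; in particular
\[
b_1(t,x,u_1(t,x))=\hat b_2(t,x,u_1(t,x)),\qquad (t,x)\in Q.
\]
Moreover the gauge invariance $\mathcal N_{\hat b_2}=\mathcal N_{b_2}$ together with \eqref{t1c} yields $\mathcal N_{b_1}=\mathcal N_{\hat b_2}$ on $\mathbb B(f_0,\epsilon)$. Since $\hat b_2$ is analytic in $\mu$, it therefore suffices to prove $b_1=\hat b_2$, which is exactly \eqref{t1d}.

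Next, I would run a higher order linearization. Because $u\mapsto b_j(\ccdot,u)$ is analytic and \eqref{eq1} is well posed on $\mathbb B(f_0,\epsilon)$ by Proposition~\ref{p1}, the solution map $f\mapsto u_f$ is smooth; I take boundary data $f=f_0+\sum_{\ell=1}^N\varepsilon_\ell h_\ell$ with $h_\ell\in\mathcal K_0$ and differentiate \eqref{eq1} and the identity $\mathcal N_{b_1}(f)=\mathcal N_{\hat b_2}(f)$ in $\varepsilon$ at $\varepsilon=0$. The first order profiles solve the linearized parabolic equation $\rho\partial_t v+\mathcal A(t)v+\partial_\mu b_j(\ccdot,u_1)\,v=0$ with lateral data $h_\ell$ and zero initial data, and equality of the first order Neumann data is precisely the statement that the two \emph{linear} parabolic equations with potentials $\partial_\mu b_1(\ccdot,u_1)$ and $\partial_\mu\hat b_2(\ccdot,u_1)$ have the same DN map. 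The $N$-th order profiles solve the same type of linear equation with a source polynomial in the lower order profiles and carrying the coefficient $\partial_\mu^N b_j(\ccdot,u_1)$; testing the difference of these profiles against a solution $w$ of the adjoint (backward) equation $-\rho\partial_t w+\mathcal A(t)w+\partial_\mu b_1(\ccdot,u_1)\,w=0$ and using the vanishing Cauchy data of the difference produces, by induction on $N$, the integral identities
\[
\int_Q \big(\partial_\mu^N b_1-\partial_\mu^N \hat b_2\big)(t,x,u_1(t,x))\,v_1\cdots v_N\,w\,\dd x\,\dd t=0
\]
over forward linearized solutions $v_1,\dots,v_N$ and adjoint solutions $w$.

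The hard part will be the density step: deducing from these identities that the coefficients agree along the graph of $u_1$, i.e. that $\partial_\mu^N b_1(t,x,u_1(t,x))=\partial_\mu^N \hat b_2(t,x,u_1(t,x))$ for all $(t,x)\in Q$ and all $N\ge0$. The case $N=0$ is the common-solution identity above. The case $N=1$ is the genuine linear inverse problem of recovering a \emph{time-dependent} potential of a linear parabolic equation from its DN map; I would construct geometric optics solutions for the forward and adjoint linearized operators concentrating near the geodesics of $g(t)$, and use the simplicity of $(\overline\Omega_1,g(t))$ from Assumption~\ref{assump simple} together with injectivity of the geodesic ray transform to show that the products $v\,w$ are complete enough to force $\partial_\mu b_1(\ccdot,u_1)=\partial_\mu\hat b_2(\ccdot,u_1)$; this is where the Carleman estimate with boundary terms enters, both to build the concentrating solutions and to control their remainders. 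Once a given order is matched the corresponding linearized operators coincide, so the forward and adjoint solutions for $b_1$ and $\hat b_2$ may be taken equal and the inductive step recovers the next coefficient by the same density argument. Finally, fixing $(t,x)\in Q$, the real-analytic maps $\mu\mapsto b_1(t,x,\mu)$ and $\mu\mapsto\hat b_2(t,x,\mu)$ have identical derivatives of all orders at $\mu=u_1(t,x)$, hence the same Taylor series there, so they coincide near $u_1(t,x)$ and, by analytic continuation on $\R$, for every $\mu\in\R$. Thus $b_1=\hat b_2=S_\varphi b_2$, which establishes \eqref{t1d} and completes the proof.
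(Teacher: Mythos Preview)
Your proposal is correct and follows essentially the same strategy as the paper: higher order linearization, a density argument via geometric optics solutions and the geodesic ray transform on $(\overline\Omega_1,g(t))$ (the content of Proposition~\ref{p2}), and analytic continuation in $\mu$. The only organizational difference is that you fix the gauge first by replacing $b_2$ with $\hat b_2=S_\varphi b_2$ so that both equations share the reference solution $u_1$, whereas the paper works directly with $b_1,b_2$ and their distinct reference solutions $u_{1,0},u_{2,0}$, proves $\partial_\mu^k b_1(\ccdot,u_{1,0})=\partial_\mu^k b_2(\ccdot,u_{2,0})$ for all $k$, and extracts the gauge relation afterwards; the two are equivalent since $\partial_\mu^k\hat b_2(\ccdot,u_1)=\partial_\mu^k b_2(\ccdot,u_{2,0})$.

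One small inaccuracy: for Theorem~\ref{t1} (full data) no Carleman estimate with boundary terms is needed. The remainder control for the GO solutions comes from the energy estimate of Proposition~\ref{p3}, and the density step is purely Proposition~\ref{p2}; the Carleman estimate with boundary terms (Lemma~\ref{l7}) enters only in the partial data Theorem~\ref{t5}.
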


\begin{remark}\label{rmk_taylor expansion}
	Let us remark what can be recovered by our methods without the assumption of analyticity  of $b$ in the $\mu$-variable. In this case we can only recover the Taylor series of $b$ in $\mu$-variable at shifted points. Indeed, assume as in Theorem \ref{t1} with the exception that the nonlinearities $b_1$ and $b_2$ are not analytic in the $\mu$-variable, and fix $(t,x)$. In this case, by inspecting the proof of Theorem \ref{t1}(see Section \ref{sec_proof_thm}), we can show that 
	\[
	\partial_\mu^k\s b_1\left(t,x,u_{1,0}(t,x)\right)=\partial_\mu^k\s b_2\left(t,x,u_{2,0}(t,x)\right),\quad \text{ for any } k\in \mathbb N,
	\]
	where $u_{j,0}(t,x)$, for $j=1,2$, is the solution to \eqref{eq1} with coefficient $b=b_j$ and boundary value $f=f_0\in \mathcal{K}_0$. Thus we see that the formal Taylor series of $b_1(t,x,\ccdot)$ at $u_{1,0}(t,x)$ is that of $b_2(t,x,\ccdot)$ shifted by $u_{2,0}(t,x)-u_{1,0}(t,x)$, which is typically non-zero as we do not assume that we know any solutions to \eqref{eq1} a priori. 
	
	By assuming analyticity in Theorem \ref{t1} we are able to connect the Taylor series of $b_1$ and $b_2$ in the $\mu$-variable at different points, which leads to \eqref{t1d} in the end. This is one motivation for the analyticity assumption. Note that we do not assume analyticity in the other variables.
\end{remark}

For our second result, let us consider a \emph{partial data result} when $\mathcal A(t)=-\Delta$ (that is, $a=\text{Id}_{\s \R^{n\times n}}$ is an $n\times n$ identity matrix) and $\rho\equiv1$. More precisely, consider  the front and back sets of $\partial \Omega$
$$\Gamma_\pm(x_0):=\left\{x\in\partial\Omega:\, \pm(x-x_0)\cdot\nu(x)\geq0 \right\}$$
with respect to a source $x_0\in\R^n\setminus\overline{\Omega}$.
Then, our second main result is stated as follows: 

\begin{Thm}\label{t5} For $n\geq3$ and $\Omega$ simply connected, let $a=(a_{ik})_{1 \leq i,k \leq n}= \mathrm{Id}_{\,\R^{n\times n}}$ and
	$\rho \equiv1$. Let $b_j\in\mathbb A(\R;C^{\frac{\alpha}{2},\alpha}([0,T]\times\overline{\Omega}))$, which satisfies \eqref{b} as $b=b_j$, for $j=1,2$. We also assume that there exists $f_0\in \mathcal K_0$ such that problem \eqref{eq1}, with $f=f_0$ and $b=b_j$, admits a unique solution, for $j=1,2$. Fix $x_0\in\R^n\setminus\overline{\Omega}$ and consider $\tilde{\Gamma}$ a neighborhood of $\Gamma_-(x_0)$ on $\partial\Omega$.
	Then,  the condition
	\begin{align}\label{t5a}
		\mathcal N_{b_1}f(t,x)=\mathcal N_{b_2}f(t,x),\quad (t,x)\in (0,T)\times  \tilde{\Gamma}, \text{ for any } f\in\mathbb B(f_0,\epsilon),
	\end{align} 
	implies that there exists $\varphi\in   C^{1+\frac{\alpha}{2},2+\alpha}([0,T]\times\overline{\Omega})$ satisfying 
	\begin{align}\label{t5b}
		\varphi(0,x)=0,\quad x\in\Omega,\quad \varphi(t,x)=0,\quad (t,x)\in\Sigma,\\
		\label{t5bb}\partial_{\nu} \varphi(t,x)=0,\quad (t,x)\in (0,T)\times \tilde{\Gamma}
	\end{align}
	such that 
	\begin{align}\label{t1d_new}
		b_1=S_\varphi b_2.
	\end{align}
\end{Thm}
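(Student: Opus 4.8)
The plan is to reduce the nonlinear problem to a sequence of linear parabolic inverse problems with partial boundary data by higher order linearization, to solve these by a Kenig--Sj\"ostrand--Uhlmann type construction of complex geometric optics (CGO) solutions for the heat operator, and finally to use the analyticity of $b_1,b_2$ in $\mu$ to upgrade the recovered Taylor coefficients into the gauge relation \eqref{t1d_new}. I begin by fixing $f_0$ and letting $u_{j,0}$ be the solution of \eqref{eq1} with $b=b_j$, $f=f_0$, as in Remark \ref{rmk_taylor expansion}. Setting $\varphi:=u_{2,0}-u_{1,0}$, the functions $u_{1,0},u_{2,0}$ share the initial value $0$ and the lateral value $f_0$, while evaluating \eqref{t5a} at $f=f_0$ gives $\partial_\nu u_{1,0}=\partial_\nu u_{2,0}$ on $(0,T)\times\tilde\Gamma$; hence $\varphi$ satisfies exactly \eqref{t5b}--\eqref{t5bb} and is an admissible gauge function. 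It then suffices to match the nonlinearities along $\varphi$.

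For the linearization I would feed in $f=f_0+\sum_{i=1}^{k}\epsilon_i h_i$ with $h_i\in\mathcal K_0$ and the $\epsilon_i$ small, and differentiate \eqref{t5a} in $\epsilon_1\cdots\epsilon_k$ at $\epsilon=0$. At first order this yields that the partial DN maps on $(0,T)\times\tilde\Gamma$ of the linear heat equations $\partial_t v-\Delta v+q_j v=0$ agree, where $q_j:=\partial_\mu b_j(\ccdot,u_{j,0})$. Recovering $q_1=q_2$ from this partial data is the heart of the matter. I would construct CGO solutions for $\partial_t-\Delta$ using the limiting Carleman weight $\log|x-x_0|$ attached to the source $x_0$, and a Carleman estimate with boundary terms (the parabolic estimate announced in the abstract) to discard the contribution of the unmeasured back face $\Gamma_+(x_0)$, where $\partial_\nu\log|x-x_0|$ has a favourable sign. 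The hypotheses $n\ge3$ and $\Omega$ simply connected enter precisely here, to build such solutions and to localize the reduction to the measured face $\tilde\Gamma\supset\Gamma_-(x_0)$. The resulting identity, together with the injectivity of an associated (attenuated) ray transform, gives $q_1=q_2$, i.e. $\partial_\mu b_1(\ccdot,u_{1,0})=\partial_\mu b_2(\ccdot,u_{2,0})$.

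With $q_1=q_2$ the first order linearized operators coincide, so for $k\ge2$ I may use common linearized solutions $v^{(1)},\dots,v^{(k)}$ together with a solution $v^{(0)}$ of the adjoint equation, integrate by parts in $Q$, cancel the lower order contributions inductively, and again drop the unmeasured boundary term by the Carleman sign condition. This produces the integral identity
\[
\int_Q \Bigl(\partial_\mu^k b_1(\ccdot,u_{1,0})-\partial_\mu^k b_2(\ccdot,u_{2,0})\Bigr)\,v^{(0)}\prod_{i=1}^{k}v^{(i)}\,\dd t\,\dd x=0 .
\]
Density of products of such localized CGO solutions then forces the pointwise identities $\partial_\mu^k b_1(\ccdot,u_{1,0})=\partial_\mu^k b_2(\ccdot,u_{2,0})$ for every $k\ge1$.

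Finally I would pass to the gauge relation using analyticity. The equalities for $k\ge1$ give $\partial_\mu b_1(t,x,u_{1,0}+s)=\partial_\mu b_2(t,x,u_{2,0}+s)$ for all $s$; integrating in $s$ and writing $\mu=u_{1,0}+s$ yields
\[
b_1(t,x,\mu)=b_2\bigl(t,x,\mu+\varphi(t,x)\bigr)+\bigl(b_1(t,x,u_{1,0})-b_2(t,x,u_{2,0})\bigr).
\]
Subtracting the two base equations (recall $\rho\equiv1$, $\mathcal A=-\Delta$) shows that the last bracket equals $\partial_t\varphi+\mathcal A\varphi$, which is exactly the inhomogeneous term in \eqref{gauge2}; hence $b_1=S_\varphi b_2$, which is \eqref{t1d_new}. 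The main obstacle is the first step: the partial data linear parabolic inverse problem, for which one needs both a supply of CGO solutions concentrated away from $\Gamma_+(x_0)$ and a parabolic Carleman estimate with boundary terms strong enough to discard the unmeasured boundary data.
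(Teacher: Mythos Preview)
Your overall architecture matches the paper's proof closely: define $\varphi=u_{2,0}-u_{1,0}$, linearize to all orders, use special solutions together with a boundary Carleman estimate to control the unmeasured portion of $\partial\Omega$, invoke a ray-transform type injectivity, and then run the analyticity argument. The final step (passing from the equalities $\partial_\mu^k b_1(\ccdot,u_{1,0})=\partial_\mu^k b_2(\ccdot,u_{2,0})$ to $b_1=S_\varphi b_2$) is exactly what the paper does.

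There is however a genuine technical discrepancy in your description of the key step. You propose CGO solutions built from the \emph{limiting Carleman weight} $\log|x-x_0|$ \`a la Kenig--Sj\"ostrand--Uhlmann. The paper does \emph{not} do this. Its Carleman estimate (Lemma~\ref{l7}) uses the weight $e^{-2(\tau^2 t+\tau\psi(x))}$ with $\psi(x)=|x-x_0|$, and the GO solutions are of the real-phase form $e^{\pm(\tau^2 t+\tau|x-y|)}(c_\pm+R_\pm)$, for $y$ ranging over a small ball $B(x_0,\epsilon)$ (this perturbation of the source is possible precisely because $\tilde\Gamma$ is an \emph{open} neighborhood of $\Gamma_-(x_0)$). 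The paper explicitly notes that this weight is \emph{not} a limiting Carleman weight for the parabolic operator; the estimate is derived directly and requires $n\geq 3$ (the term controlling $|\nabla w|^2$ picks up a factor $n-3$). Your proposed logarithmic weight would require a different Carleman estimate and a different CGO construction, neither of which is supplied or obviously available for $\partial_t-\Delta$; in particular you do not say how the time variable enters the phase or the estimate. So either switch to the paper's weight or justify yours.

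Two smaller points. First, after the Carleman argument the paper does not quite use ``injectivity of an attenuated ray transform''; it obtains, for each $t$, the vanishing of line integrals of the unknown along all rays emanating from every $y\in B(x_0,\epsilon)$, and then applies a local support theorem (\cite[Theorem~1.2]{IM}) for the $X$-ray transform, using that the unknown vanishes outside $\Omega$ and hence on $B(x_0,\epsilon)$. Second, at the higher-order steps the paper again uses the Carleman estimate to show the boundary integral over $\partial\Omega\setminus\Gamma_-(y,\epsilon)$ is $O(\tau^{-1/2})$; only \emph{two} of the $m{+}2$ solutions ($v_{m+1}$ and the adjoint $w$) are taken as GO solutions, and once the limit $\tau\to\infty$ kills the boundary term one multiplies the resulting pointwise identity by a generic adjoint solution and invokes the full-data density Proposition~\ref{p2}. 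Your sketch suggests using density of products of ``localized CGO solutions'' directly, which is a slightly different (and more delicate) route.
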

We will be able to be break the gauge condition $b_1=S_\varphi b_2$ in Theorems \ref{t1} and \ref{t5} in various cases. We present these results separately in the next section.

\subsection{Breaking the gauge in the sense of (IP2)} 
In several situation, the gauge class \eqref{t1d_new} could be broken  and one can fully determine the semilinear term $b$ in \eqref{eq1} from its parabolic DN map. We will present below classes of nonlinearities when such phenomenon occurs. We start by considering general elements of $b\in\mathbb A(\R;C^{\frac{\alpha}{2},\alpha}([0,T]\times\overline{\Omega}))$  for which the gauge invariance \eqref{t1d_new} breaks.

\begin{corollary}\label{c1} Let the conditions of Theorem \ref{t1} be fulfilled and assume that 
	there exists $\kappa\in C^{\frac{\alpha}{2},\alpha}([0,T]\times\overline{\Omega})$ such that 
	\begin{align}\label{c1a}
		b_1(t,x,\kappa(t,x))=b_2(t,x,\kappa(t,x)),\quad (t,x)\in [0,T]\times\overline{\Omega}.
	\end{align}
	Then,  the condition \eqref{t1c} implies that $b_1=b_2$.
	In the same way, assuming that the conditions of Theorem \ref{t5} are fulfilled, the condition \eqref{t5a} implies that $b_1=b_2$.

\end{corollary}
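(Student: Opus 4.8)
The plan is to reduce the statement to showing that the gauge function is trivial. First I would invoke Theorem \ref{t1} in the full data setting (respectively Theorem \ref{t5} in the partial data setting): the hypothesis \eqref{t1c} (resp.\ \eqref{t5a}) together with the cited theorem produces a function $\varphi\in C^{1+\frac{\alpha}{2},2+\alpha}([0,T]\times\overline{\Omega})$ satisfying the gauge conditions \eqref{gauge1} (resp.\ \eqref{t5b}--\eqref{t5bb}) with $b_1=S_\varphi b_2$. Since $S_0b_2=b_2$, the whole corollary amounts to proving that $\varphi\equiv0$, and this is exactly what the extra hypothesis \eqref{c1a} should deliver.

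Next I would write out the gauge relation $b_1=S_\varphi b_2$ from \eqref{gauge2} and evaluate it at the point $\mu=\kappa(t,x)$, obtaining
\begin{equation*}
	b_1(t,x,\kappa(t,x))=b_2\bigl(t,x,\kappa(t,x)+\varphi(t,x)\bigr)+\rho(t,x)\partial_t\varphi(t,x)+\mathcal A(t)\varphi(t,x).
\end{equation*}
The assumption \eqref{c1a} lets me cancel the left-hand side against $b_2(t,x,\kappa(t,x))$, which leaves
\begin{equation*}
	\rho(t,x)\partial_t\varphi(t,x)+\mathcal A(t)\varphi(t,x)=b_2(t,x,\kappa(t,x))-b_2\bigl(t,x,\kappa(t,x)+\varphi(t,x)\bigr).
\end{equation*}
The key observation is that the right-hand side is \emph{linear} in $\varphi$ up to a bounded factor: by the fundamental theorem of calculus (here the analyticity, or even just the $C^1$ dependence in $\mu$, of $b_2$ is used),
\begin{equation*}
	b_2(t,x,\kappa)-b_2(t,x,\kappa+\varphi)=-c(t,x)\varphi(t,x),\qquad c(t,x):=\int_0^1\partial_\mu b_2\bigl(t,x,\kappa(t,x)+s\varphi(t,x)\bigr)\,\dd s.
\end{equation*}
Since $b_2$ is analytic in $\mu$ and $\kappa,\varphi\in C^{\frac{\alpha}{2},\alpha}([0,T]\times\overline{\Omega})$, the potential $c$ lies in $C^{\frac{\alpha}{2},\alpha}([0,T]\times\overline{\Omega})$, and in particular is bounded. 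Thus $\varphi$ solves the homogeneous linear parabolic initial boundary value problem
\begin{equation*}
	\rho\partial_t\varphi+\mathcal A(t)\varphi+c\varphi=0\ \text{ in }Q,\qquad \varphi(0,\ccdot)=0\ \text{ in }\Omega,\qquad \varphi=0\ \text{ on }\Sigma.
\end{equation*}

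Finally I would conclude by uniqueness for this homogeneous problem. The cleanest route is an energy estimate: multiplying the equation by $\varphi$, integrating over $\Omega$, using the ellipticity \eqref{ell}, noting that the boundary term vanishes because $\varphi=0$ on $\Sigma$, and applying Gronwall's inequality with the bound on $c$ forces $\int_\Omega\rho(t,\ccdot)\varphi(t,\ccdot)^2\,\dd x=0$ for every $t$, hence $\varphi\equiv0$; substituting this into $b_1=S_\varphi b_2$ yields $b_1=b_2$. I would emphasize that this uniqueness step uses only the two conditions $\varphi(0,\ccdot)=0$ and $\varphi|_\Sigma=0$, and never the Neumann conditions in \eqref{gauge1} or \eqref{t5bb}; consequently the same argument settles both the full data case and the partial data case, since in the latter $\varphi$ still vanishes on all of $\Sigma$ by \eqref{t5b}. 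The only points requiring care are the $C^{\frac{\alpha}{2},\alpha}$ regularity of the potential $c$ and the applicability of the parabolic uniqueness estimate, but both are immediate from the analyticity of $b_2$ and the classical regularity of $\varphi$. There is thus no genuine analytic obstacle here: the entire content of the corollary is the linearization at $\kappa$ enabled by \eqref{c1a}.
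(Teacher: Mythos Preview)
Your proof is correct and follows essentially the same route as the paper: invoke the gauge relation from Theorem~\ref{t1} (or~\ref{t5}), evaluate at $\mu=\kappa$, linearize the $b_2$-difference via the fundamental theorem of calculus to obtain a homogeneous linear parabolic IBVP for $\varphi$, and conclude $\varphi\equiv0$ by uniqueness. Your explicit remark that only the Dirichlet condition $\varphi|_\Sigma=0$ is needed for uniqueness (so the partial data case follows identically) is a nice clarification that the paper leaves implicit.
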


\begin{corollary}\label{c2} Let the conditions of Theorem \ref{t1} be fulfilled and assume that there exists $h\in C^{\alpha}(\overline{\Omega})$, $G\in C^{\frac{\alpha}{2},\alpha}([0,T]\times\overline{\Omega})$ and $\theta\in (0,T]$ satisfying the condition
	\begin{align}\label{c2a}
		\inf_{x\in\Omega}|G(\theta,x)|>0,
	\end{align}
	such that
	\begin{align}\label{c2b}
		b_1(t,x,0)-b_2(t,x,0)=h(x)G(t,x),\quad (t,x)\in [0,T]\times\overline{\Omega}.
	\end{align}
	Assume also that the solutions $u_{j,0}$ of \eqref{eq1}, $j=1,2$, with $f=f_0$ and $b=b_j$ satisfy the condition
	\begin{align}\label{c2c}
		u_{1,0}(\theta,x)=u_{2,0}(\theta,x),\quad x\in\Omega.
	\end{align}
	Then,  the condition \eqref{t1c} implies that $b_1=b_2$. Moreover, by assuming that the conditions of Theorem \ref{t5} are fulfilled, the condition \eqref{t5a} implies that $b_1=b_2$.
\end{corollary}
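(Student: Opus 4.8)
The plan is to use Theorem \ref{t1} (resp. Theorem \ref{t5}) to produce the gauge $\varphi$, to recognize that $\varphi$ is itself the difference of the two background solutions, and then to observe that the structural hypothesis \eqref{c2b} together with \eqref{c2c} turns the determination of $\varphi$ into a parabolic \emph{inverse source problem} with overdetermined lateral Cauchy data, which I would resolve by a Bukhgeim--Klibanov / Carleman argument.

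First I would invoke Theorem \ref{t1}: the equality \eqref{t1c} of DN maps yields $\varphi\in C^{1+\frac{\alpha}{2},2+\alpha}([0,T]\times\overline{\Omega})$ with $b_1=S_\varphi b_2$ and $\varphi$ satisfying \eqref{gauge1}. Next I would identify the gauge with the difference of the background solutions. Inserting $b_1=S_\varphi b_2$ into the equation \eqref{eq1} solved by $u_{1,0}$ and using \eqref{gauge2} shows that $u_{1,0}+\varphi$ solves \eqref{eq1} with $b=b_2$ and the same data $f_0$ (here the conditions $\varphi|_\Sigma=0$ and $\varphi(0,\cdot)=0$ from \eqref{gauge1} are used). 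By the uniqueness of Proposition \ref{p1} this gives $\varphi=u_{2,0}-u_{1,0}$. In particular \eqref{c2c} forces $\varphi(\theta,\cdot)=0$ in $\Omega$, and combined with $\varphi|_\Sigma=0$ we get $\varphi(\theta,\cdot)\equiv 0$ on $\overline{\Omega}$.

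The heart of the argument is to derive a closed equation for $\varphi$. Evaluating the gauge relation \eqref{gauge2} at $\mu=0$ gives
\[
b_1(t,x,0)=b_2(t,x,\varphi(t,x))+\rho(t,x)\partial_t\varphi(t,x)+\mathcal A(t)\varphi(t,x).
\]
Subtracting $b_2(t,x,0)$, using the structural hypothesis \eqref{c2b}, and writing $b_2(t,x,\varphi)-b_2(t,x,0)=c(t,x)\varphi$ with $c(t,x):=\int_0^1\partial_\mu b_2(t,x,s\varphi(t,x))\,ds\in C^{\frac{\alpha}{2},\alpha}([0,T]\times\overline\Omega)$, I obtain the linear parabolic equation
\[
\rho(t,x)\partial_t\varphi+\mathcal A(t)\varphi+c(t,x)\varphi=h(x)G(t,x),\qquad (t,x)\in Q,
\]
supplemented by $\varphi(0,\cdot)=0$, the overdetermined lateral data $\varphi=\partial_{\nu(a)}\varphi=0$ on $\Sigma$, and $\varphi(\theta,\cdot)=0$. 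This is exactly an inverse source problem: the source has the separated form $h(x)G(t,x)$ with $G$ non-degenerate at the time $\theta$ thanks to \eqref{c2a}, while the lateral Cauchy data and the slice $\varphi(\theta,\cdot)$ all vanish.

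I would then run the Bukhgeim--Klibanov method. Evaluating the displayed equation at $t=\theta$, where $\varphi(\theta,\cdot)\equiv 0$ makes $\mathcal A(\theta)\varphi(\theta,\cdot)=0$ and $c(\theta,\cdot)\varphi(\theta,\cdot)=0$, yields $\rho(\theta,\cdot)\partial_t\varphi(\theta,\cdot)=h\,G(\theta,\cdot)$, so by \eqref{c2a} it suffices to control $\partial_t\varphi(\theta,\cdot)$. Differentiating the equation in time and applying the Carleman estimate with boundary terms announced in the introduction (with a weight peaked at $t=\theta$), the lateral boundary integrals drop precisely because $\varphi=\partial_{\nu(a)}\varphi=0$ on $\Sigma$, and the resulting estimate forces $h\equiv 0$. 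For the partial-data case of Theorem \ref{t5}, where $\mathcal A=-\Delta$, $\rho\equiv 1$ and only $\partial_\nu\varphi=0$ on $(0,T)\times\tilde\Gamma$ by \eqref{t5b}--\eqref{t5bb}, the same scheme applies once the Carleman weight is built from $|x-x_0|$, so that the uncontrolled boundary terms on $\partial\Omega\setminus\tilde\Gamma\subset\Gamma_+(x_0)$ carry the favorable sign. In either case $h\equiv 0$ gives $b_1(t,x,0)=b_2(t,x,0)$, that is \eqref{c1a} with $\kappa\equiv 0$, and Corollary \ref{c1} then yields $b_1=b_2$ (equivalently, with zero source and zero data, uniqueness for the parabolic problem solved by $\varphi$ gives $\varphi\equiv 0$, hence $b_1=S_0 b_2=b_2$). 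The main obstacle is this Carleman/Bukhgeim--Klibanov step: proving and applying a parabolic Carleman estimate that both absorbs the boundary terms generated by the lateral Cauchy data and tolerates the first-order coefficient $c$, which itself depends on the unknown $\varphi$ but only through uniform $C^{\frac{\alpha}{2},\alpha}$ bounds; this is exactly where the non-degeneracy \eqref{c2a} and the vanishing of $\varphi(\theta,\cdot)$ are indispensable.
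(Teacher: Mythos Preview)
Your proposal is correct and follows essentially the same route as the paper: apply Theorem \ref{t1} (resp.\ \ref{t5}) to obtain $\varphi=u_{2,0}-u_{1,0}$, evaluate the gauge relation at $\mu=0$ to derive the linear parabolic equation $\rho\partial_t\varphi+\mathcal A(t)\varphi+q\varphi=hG$ with vanishing lateral Cauchy data and $\varphi(\theta,\cdot)=0$, and then conclude $h\equiv0$ via a Carleman-based inverse source result, whence $\varphi\equiv0$ and $b_1=b_2$. The only cosmetic difference is that the paper cites \cite[Theorem~3.4]{IY} directly for the inverse source step, whereas you sketch the Bukhgeim--Klibanov mechanism behind it.
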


Now let us consider elements  $b\in\mathbb A(\R;C^{\frac{\alpha}{2},\alpha}([0,T]\times\overline{\Omega}))$ which are polynomials of the form
\begin{align}\label{pol}
	b(t,x,\mu)=\sum_{k=0}^N b_k(t,x)\mu^k,\quad (t,x,\mu)\in[0,T]\times\overline{\Omega}\times\R.
\end{align}
For this class of nonlinear terms we can prove the following.

\begin{Thm}\label{c3} Let the condition of Theorem \ref{t1} be fulfilled and assume that, for $j=1,2$, there exists $N_j\geq2$ such that
	\begin{align}\label{c3a}
		b_j(t,x,\mu)=\sum_{k=0}^{N_j} b_{j,k}(t,x)\mu^k,\quad (t,x,\mu)\in[0,T]\times\overline{\Omega}\times\R.
	\end{align}
	Let $\omega$  be an open subset   of $\R^n$ such that $\omega\subset \Omega$ and $J$ a dense subset of $(0,T)\times\omega$.
	We assume also that, for $N=\min(N_1,N_2)$, the following conditions 
	\begin{align}\label{c3b}
		\min \left(\left|(b_{1,N-1}-b_{2,N-1})(t,x)\right|, \, \sum_{j=1}^2\left|(b_{j,N}-b_{j,N-1})(t,x)\right|\right)=0,\ (t,x)\in  J,
	\end{align}
	\begin{align}\label{c3c}
		\left|b_{1,N}(t,x)\right|>0,\quad (t,x)\in J
	\end{align}
	\begin{align}\label{c3d}
		b_{1,0}(t,x)=b_{2,0}(t,x),\quad (t,x)\in(0,T)\times(\Omega\setminus{\overline{\omega}}),
	\end{align}
	hold true.
	Then the condition \eqref{t1c} implies that $b_1=b_2$. In addition, assuming that the conditions of Theorem \ref{t5} are fulfilled, condition \eqref{t5a} implies that $b_1=b_2$.
\end{Thm}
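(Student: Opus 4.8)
The plan is to reduce Theorem~\ref{c3} to the gauge-class identity \eqref{t1d_new} already provided by Theorem~\ref{t1} (resp. Theorem~\ref{t5}) and then to show that the hypotheses \eqref{c3b}--\eqref{c3d} force the gauge function $\varphi$ to vanish identically. First I would invoke Theorem~\ref{t1}: condition \eqref{t1c} gives a $\varphi\in C^{1+\frac{\alpha}{2},2+\alpha}([0,T]\times\overline{\Omega})$ satisfying \eqref{gauge1} with $b_1=S_\varphi b_2$, i.e.
\begin{align*}
	\sum_{k=0}^{N_1} b_{1,k}(t,x)\mu^k=\sum_{k=0}^{N_2} b_{2,k}(t,x)\bigl(\mu+\varphi(t,x)\bigr)^k+\rho(t,x)\partial_t\varphi(t,x)+\mathcal A(t)\varphi(t,x)
\end{align*}
for all $(t,x,\mu)\in Q\times\R$. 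Since both sides are polynomials in $\mu$ with $C^{\frac{\alpha}{2},\alpha}$ coefficients, I would expand the right-hand side via the binomial theorem and equate coefficients of each power $\mu^k$. This is the routine but essential algebraic step that converts the functional identity into a finite system of pointwise relations among the $b_{j,k}$ and $\varphi$.

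Next I would exploit the top-degree coefficients to pin down $\varphi$ on $J$. Comparing the coefficient of $\mu^{N}$ (with $N=\min(N_1,N_2)$) and of $\mu^{N-1}$ yields, on $J\subset(0,T)\times\omega$, relations of the schematic form $b_{1,N}=b_{2,N}$ and $b_{1,N-1}=b_{2,N-1}+N\,b_{2,N}\,\varphi$ (after accounting for the cases $N_1\ne N_2$, where the leading coefficient of the lower-degree polynomial is forced to zero on $J$). Combining these gives
\begin{align*}
	(b_{1,N-1}-b_{2,N-1})(t,x)=N\,b_{1,N}(t,x)\,\varphi(t,x),\quad (t,x)\in J,
\end{align*}
so that $\varphi$ is expressed through the measured coefficients. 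The structural condition \eqref{c3b} is designed precisely so that in each of its two alternatives the left-hand side vanishes or the multiplier degenerates in a controlled way; together with the nonvanishing \eqref{c3c} of $b_{1,N}$ on $J$, I would conclude that $\varphi=0$ on $J$. By density of $J$ in $(0,T)\times\omega$ and continuity of $\varphi$, this gives $\varphi\equiv 0$ on $(0,T)\times\omega$.

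To upgrade $\varphi\equiv0$ from $\omega$ to all of $\Omega$ I would use the equation satisfied by $\varphi$ together with \eqref{c3d}. The constant coefficient of $\mu$ in the expanded identity reads, on the complement $(0,T)\times(\Omega\setminus\overline{\omega})$ where $b_{1,0}=b_{2,0}$,
\begin{align*}
	\rho(t,x)\partial_t\varphi(t,x)+\mathcal A(t)\varphi(t,x)+\bigl(\text{terms vanishing where the }b_{2,k}\text{ match and }\varphi=0\bigr)=0,
\end{align*}
so that $\varphi$ solves a linear homogeneous parabolic equation $\rho\,\partial_t\varphi+\mathcal A(t)\varphi=0$ on $(0,T)\times(\Omega\setminus\overline{\omega})$ with zero initial data and with $\varphi$ together with its normal derivative vanishing on an interior interface $\partial\omega$ (since $\varphi\equiv0$ on $\omega$). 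Unique continuation for parabolic equations (as in the reference \cite{SaSc} used in Example~\ref{rmk: example}) then propagates the vanishing of $\varphi$ throughout $\Omega$, giving $\varphi\equiv0$ on $Q$. With $\varphi\equiv0$ the gauge map $S_\varphi$ is the identity and \eqref{t1d_new} collapses to $b_1=b_2$. The case of Theorem~\ref{t5} is identical, replacing the full normal-derivative condition on $\Sigma$ by the partial one \eqref{t5bb} and invoking Theorem~\ref{t5} in place of Theorem~\ref{t1}.

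I expect the main obstacle to be the careful bookkeeping when $N_1\ne N_2$ and the verification that the degenerate alternatives encoded in \eqref{c3b} genuinely force $\varphi=0$ rather than merely constraining it; in particular one must check that the second alternative $\sum_j|(b_{j,N}-b_{j,N-1})|=0$ interacts correctly with the coefficient comparison so that no spurious nonzero $\varphi$ survives on $J$. The parabolic unique continuation step propagating $\varphi$ from $\omega$ to $\Omega$ is conceptually the crux but is available off the shelf.
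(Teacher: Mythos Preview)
Your outline matches the paper's proof up to and including the step where you conclude $\varphi\equiv0$ on $(0,T)\times\omega$: invoke Theorem~\ref{t1}, expand the gauge identity $b_1=S_\varphi b_2$ via the binomial theorem, match the coefficients of $\mu^N$ and $\mu^{N-1}$ to obtain $b_{1,N}=b_{2,N}$ and $b_{1,N-1}-b_{2,N-1}=N\,b_{1,N}\,\varphi$, and then use \eqref{c3b}--\eqref{c3c} and density of $J$ to force $\varphi=0$ on $(0,T)\times\omega$. (One remark: in the second alternative of \eqref{c3b} the multiplier $b_{1,N}$ does \emph{not} degenerate --- rather, $b_{j,N}=b_{j,N-1}$ for $j=1,2$ together with $b_{1,N}=b_{2,N}$ forces $b_{1,N-1}=b_{2,N-1}$ as well, so again the left-hand side vanishes and \eqref{c3c} gives $\varphi=0$.)

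The divergence from the paper is in your ``upgrade'' step. The paper does not use unique continuation on the complement $(0,T)\times(\Omega\setminus\overline\omega)$. Instead, it first observes that the $\mu^0$-identity
\[
b_{1,0}-b_{2,0}=\rho\,\partial_t\varphi+\mathcal A(t)\varphi+b_2(\cdot,\varphi)-b_2(\cdot,0)
\]
evaluated on $(0,T)\times\omega$ (where $\varphi=0$) gives $b_{1,0}=b_{2,0}$ there; combined with \eqref{c3d} this yields $b_{1,0}=b_{2,0}$ on all of $Q$. Then the same $\mu^0$-identity becomes, \emph{globally} on $Q$,
\[
\rho\,\partial_t\varphi+\mathcal A(t)\varphi+q\,\varphi=0,\qquad q(t,x)=\int_0^1\partial_\mu b_2(t,x,s\varphi(t,x))\,ds,
\]
with $\varphi|_\Sigma=0$ and $\varphi(0,\cdot)=0$, and ordinary uniqueness for this linear IBVP on the original (smooth) domain $\Omega$ gives $\varphi\equiv0$. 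This avoids unique continuation altogether and sidesteps any regularity issues with $\partial\omega$. Your route via parabolic unique continuation across $\partial\omega$ can be made to work, but note that the equation you wrote on the complement is missing the zeroth-order term $q\varphi$ (the ``terms vanishing where $\varphi=0$'' are $O(\varphi)$, not identically zero on the complement), and you would need to justify the UCP on a domain whose interior interface $\partial\omega$ need not be smooth. The paper's argument is both shorter and more elementary.
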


We make the following remark. 

\begin{remark}
	\phantom{} 
	
	\begin{itemize}
		\item[(i)] The preceding theorem in particularly says that the inverse source problem of recovering a source function $F$ from the DN map of
		\[
		\partial_t u-\Delta u + u^2=F
		\]
		is uniquely solvable. This is in strict contrast to the inverse source problem for the linear equation $\partial_t u-\Delta u =F$ that always a gauge invariance explained in Example \ref{rmk: example}: the sources $F$ and $\widetilde{F}:=F+\partial_t\varphi -\Delta \varphi$ in $Q$ have the same DN map. Here the only restrictions for $\varphi$ are given in \eqref{gauge1} and thus typically $F\neq \widetilde F$.
		
		
		\item[(ii)]  Inverse source problems for semilinear elliptic equations were studied in \cite{liimatainen2022uniqueness}. There it was shown that if in the  notation of the corollary $b_{1,N-1}=b_{2,N-1}$  and $b_{1,N}\neq 0$ in $\Omega$ (so that \eqref{c3b} and \eqref{c3c} hold), then the gauge breaks. With natural replacements, the corollary generalizes \cite[Corollary 1.3]{liimatainen2022uniqueness} in the elliptic setting. This can be seen by inspecting its proof.
		
	\end{itemize}

\end{remark}


Let us then  consider nonlinear terms $b\in\mathbb A(\R;C^{\frac{\alpha}{2},\alpha}([0,T]\times\overline{\Omega}))$  of the form
\begin{align}\label{sep}
	b(t,x,\mu)=b_1(t,x)\s h(t,b_2(t,x)\mu)+b_0(t,x),\quad (t,x,\mu)\in[0,T]\times\overline{\Omega}\times\R.
\end{align}
We start by considering nonlinear terms of the form \eqref{sep} with $b_2\equiv1$. 

\begin{Thm}\label{c4} Let the conditions of Theorem \ref{t1} be fulfilled and assume that, for $j=1,2$, there exists $h_j\in \mathbb A(\R;C^{\frac{\alpha}{2}}([0,T]))$ such that
	\begin{align}\label{c4a}
		b_j(t,x,\mu)=b_{j,1}(t,x)h_j(t,\mu)+b_{j,0}(t,x),\quad (t,x,\mu)\in[0,T]\times\overline{\Omega}\times\R.
	\end{align}
	Assume also that,  for all $t\in(0,T)$, there exist $\mu_t\in\R$ and $n_t\in\mathbb N$ such that
	\begin{align}\label{c4b}
		\partial_\mu^{n_t} h_1(t,\ccdot)\not\equiv0 \text{ and } \partial_\mu^{n_t} h_1(t,\mu_t)=0,\quad t\in(0,T).
	\end{align}
	Moreover, we assume that 
	\begin{align}\label{c4c}
		b_{1,1}(t,x)\neq0,\quad  (t,x)\in Q,
	\end{align}
	and that for all $t\in(0,T)$ there exists $x_t\in\partial\Omega$ such that
	\begin{align}\label{c4d}
		b_{1,1}(t,x_t)=b_{2,1}(t,x_t)\neq0,\quad  t\in(0,T).
	\end{align}
	Then,  the condition \eqref{t1c} implies that $b_1=b_2$. Moreover, assuming that the conditions of Theorem \ref{t5} are fulfilled, the condition \eqref{t5a} implies that $b_1=b_2$.
\end{Thm}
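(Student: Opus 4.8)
The plan is to use Theorem \ref{t1} to reduce everything to showing that the gauge function $\varphi$ vanishes identically. Indeed, condition \eqref{t1c} produces, via \eqref{t1d}, a function $\varphi\in C^{1+\frac{\alpha}{2},2+\alpha}([0,T]\times\overline{\Omega})$ satisfying \eqref{gauge1} with $b_1=S_\varphi b_2$; once $\varphi\equiv0$ is established, the formula \eqref{gauge2} gives $S_0 b_2=b_2$ and hence $b_1=b_2$. For the partial--data statement I would argue identically, invoking Theorem \ref{t5} in place of Theorem \ref{t1} and noting that \eqref{t5b} still forces $\varphi=0$ on all of $\Sigma$, which is the only boundary information the argument below consumes (the normal--derivative condition \eqref{t5bb} is not needed for breaking the gauge).

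Next I would substitute the special form \eqref{c4a} into $b_1=S_\varphi b_2$ and differentiate in $\mu$. Since $\rho\s\partial_t\varphi+\mathcal A(t)\varphi$ and $b_{j,0}$ do not depend on $\mu$, for every $k\geq1$ one obtains, after extending to $\overline{\Omega}$ by continuity,
\[
b_{1,1}(t,x)\,\partial_\mu^k h_1(t,\mu)=b_{2,1}(t,x)\,\partial_\mu^k h_2\bigl(t,\mu+\varphi(t,x)\bigr),\quad (t,x)\in[0,T]\times\overline{\Omega},\ \mu\in\R.
\]
Evaluating at the boundary point $x_t$ furnished by \eqref{c4d}, where $\varphi(t,x_t)=0$ and $b_{1,1}(t,x_t)=b_{2,1}(t,x_t)\neq0$, yields $\partial_\mu^k h_1(t,\ccdot)=\partial_\mu^k h_2(t,\ccdot)$ for all $k\geq1$, so $h_2(t,\ccdot)=h_1(t,\ccdot)-c(t)$ for some function $c$. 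Feeding this back and taking $k=n_t$, condition \eqref{c4c} together with $\partial_\mu^{n_t}h_1(t,\ccdot)\not\equiv0$ (from \eqref{c4b}) shows $b_{2,1}$ cannot vanish on $Q$; hence, fixing $t$ and writing $\psi:=\partial_\mu h_1(t,\ccdot)$, for all $x\in\Omega$ and $\mu\in\R$,
\[
\psi\bigl(\mu+\varphi(t,x)\bigr)=\lambda(t,x)\,\psi(\mu),\qquad \lambda(t,x):=\frac{b_{1,1}(t,x)}{b_{2,1}(t,x)}.
\]

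The heart of the proof, and the step I expect to be the main obstacle, is to convert this functional equation into a rigidity statement. Fix $t$ and suppose for contradiction that $\varphi(t,\ccdot)\not\equiv0$. Since $\varphi(t,\ccdot)$ is continuous on the connected set $\overline{\Omega}$, vanishes on $\partial\Omega$, and is nonzero at some interior point, by the intermediate value theorem there is a sequence $y_n\in\Omega$ with $s_n:=\varphi(t,y_n)\to0$ and $s_n\neq0$. Fixing $\mu_0$ with $\psi(\mu_0)\neq0$ gives $\lambda(t,y_n)=\psi(\mu_0+s_n)/\psi(\mu_0)\to1$, and then for every $\mu$ with $\psi(\mu)\neq0$ the difference quotient $\bigl(\psi(\mu+s_n)-\psi(\mu)\bigr)/s_n=\bigl((\lambda(t,y_n)-1)/s_n\bigr)\psi(\mu)$ forces $(\lambda(t,y_n)-1)/s_n$ to converge to the $\mu$--independent value $\psi'(\mu)/\psi(\mu)$. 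Thus $\psi'/\psi$ is a constant $c_0$ and $\psi(\mu)=Ce^{c_0\mu}$, whence every $\partial_\mu^k h_1(t,\ccdot)=Cc_0^{\,k-1}e^{c_0\mu}$ with $k\geq1$ is either nowhere vanishing or identically zero. This contradicts \eqref{c4b}, which demands some $\partial_\mu^{n_t}h_1(t,\ccdot)$ that is not identically zero yet has a zero at $\mu_t$. Hence $\varphi(t,\ccdot)\equiv0$ for every $t\in(0,T)$, so $\varphi\equiv0$ and $b_1=b_2$, as claimed.
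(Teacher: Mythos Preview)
Your proof is correct, but it takes a genuinely different route from the paper at the key step. Both arguments begin identically: apply Theorem~\ref{t1} (or Theorem~\ref{t5}) to obtain the gauge $\varphi$, differentiate the identity $b_1=S_\varphi b_2$ in $\mu$, and evaluate at the boundary point $x_t$ to get $\partial_\mu^k h_1(t,\ccdot)=\partial_\mu^k h_2(t,\ccdot)$ for $k\geq1$. The divergence occurs in how one exploits hypothesis~\eqref{c4b}. The paper simply substitutes $\mu=\mu_t-\varphi(t,x)$ into the $n_t$-th derivative identity, obtaining $b_{1,1}(t,x)\,\partial_\mu^{n_t}h_1(t,\mu_t-\varphi(t,x))=0$; since $b_{1,1}\neq0$ and the zeros of the analytic function $\partial_\mu^{n_t}h_1(t,\ccdot)\not\equiv0$ are isolated, the continuous map $x\mapsto\varphi(t,x)$ on the connected set $\overline{\Omega}$ must be constant, hence zero by the boundary condition. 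You instead pass to the first-order functional equation $\psi(\mu+\varphi(t,x))=\lambda(t,x)\psi(\mu)$ and argue that if $\varphi(t,\ccdot)$ were nontrivial, the existence of shifts $s_n\to0$ with $s_n\neq0$ would force $\psi=\partial_\mu h_1(t,\ccdot)$ to satisfy $\psi'/\psi=\mathrm{const}$, hence be exponential, which is incompatible with \eqref{c4b}. The paper's argument is shorter and uses only the standard isolated-zeros fact; your argument is more elaborate but has the merit of revealing \emph{why} the gauge breaks: the only analytic functions admitting a shift--scaling symmetry are exponentials, and those are precisely the ones excluded by \eqref{c4b}.
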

Under stronger assumption imposed on the expression $h$ we can also consider  nonlinear terms of the form \eqref{sep} with $b_2\not\equiv1$.
\begin{Thm}\label{c5} Let the conditions of Theorem \ref{t1} be fulfilled and assume that, for $j=1,2$, there exists $h_j\in \mathbb A(\R;C^{\frac{\alpha}{2}}([0,T]))$ such that
	\begin{align}\label{c5a}
		b_j(t,x,\mu)=b_{j,1}(t,x)h_j(t,b_{j,2}(t,x)\mu)+b_{j,0}(t,x),\quad (t,x,\mu)\in[0,T]\times\overline{\Omega}\times\R.
	\end{align}
	Assume also that,  for all $t\in(0,T)$,  there exists $n_t\in\mathbb N$ such that
	\begin{align}\label{c5b}
		\partial_\mu^{n_t} h_1(t,\ccdot)\not\equiv0 \text{ and } \partial_\mu^{n_t} h_1(t,0)=0,\quad t\in(0,T).\
	\end{align}
	Moreover, we assume that 
	\begin{align}\label{c5c}
		b_{1,1}(t,x)\neq0 \text{ and } b_{1,2}(t,x)\neq0,\quad  (t,x)\in Q,
	\end{align}
	and that for all $t\in(0,T)$ there exists $x_t\in\partial\Omega$ such that
	\begin{align}\label{c5d}
		b_{1,1}(t,x_t)=b_{2,1}(t,x_t)\neq0 \text{ and }  b_{1,2}(t,x_t)=b_{2,2}(t,x_t)\neq0\quad  t\in(0,T).
	\end{align}
	Then,  the condition \eqref{t1c} implies that $b_1=b_2$. Moreover, assuming that the conditions of Theorem \ref{t5} are fulfilled, the condition \eqref{t5a} implies that $b_1=b_2$.
\end{Thm}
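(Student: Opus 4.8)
The plan is to feed the hypothesis into Theorem~\ref{t1}, which reduces matters to a gauge relation, and then to show that the only admissible gauge is trivial, so that $b_1=S_0b_2=b_2$. First I would apply Theorem~\ref{t1} to \eqref{t1c}: it furnishes $\varphi\in C^{1+\frac{\alpha}{2},2+\alpha}([0,T]\times\overline{\Omega})$ obeying \eqref{gauge1} and $b_1=S_\varphi b_2$, which by \eqref{c5a} reads
\begin{align*}
b_{1,1}(t,x)\s h_1(t,b_{1,2}(t,x)\mu)+b_{1,0}(t,x)=b_{2,1}(t,x)\s h_2\big(t,b_{2,2}(t,x)(\mu+\varphi(t,x))\big)+b_{2,0}(t,x)+\rho\s\partial_t\varphi+\mathcal A(t)\varphi,
\end{align*}
for all $(t,x,\mu)\in Q\times\R$. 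Since $b_{1,0}$, $b_{2,0}$ and the gauge term $\rho\s\partial_t\varphi+\mathcal A(t)\varphi$ are independent of $\mu$, differentiating $k\geq1$ times in $\mu$ removes them and yields
\begin{align}\label{plan-deriv}
b_{1,1}b_{1,2}^{\,k}(\partial_\mu^kh_1)(t,b_{1,2}\mu)=b_{2,1}b_{2,2}^{\,k}(\partial_\mu^kh_2)\big(t,b_{2,2}(\mu+\varphi)\big),\qquad k\geq1.
\end{align}

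Next I would extract the equality of all $\mu$-derivatives of $h_1$ and $h_2$, and the non-vanishing of $b_{2,1},b_{2,2}$. Restricting \eqref{plan-deriv} to $\Sigma$, where $\varphi=0$ by \eqref{gauge1}, and evaluating at the point $x_t\in\partial\Omega$ from \eqref{c5d} — where $b_{1,1}(t,x_t)=b_{2,1}(t,x_t)\neq0$ and $b_{1,2}(t,x_t)=b_{2,2}(t,x_t)=:B_t\neq0$ — the prefactors cancel and give $(\partial_\mu^kh_1)(t,B_t\mu)=(\partial_\mu^kh_2)(t,B_t\mu)$; letting $\mu$ range over $\R$ and using $B_t\neq0$, I conclude $\partial_\mu^kh_1(t,\ccdot)=\partial_\mu^kh_2(t,\ccdot)$ for all $k\geq1$ and all $t\in(0,T)$. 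I would also note that $b_{2,1}\neq0$ and $b_{2,2}\neq0$ throughout $Q$: if either vanished at some $(t,x)$ the right-hand side of \eqref{plan-deriv} would vanish for every $k\geq1$, and since $b_{1,1},b_{1,2}\neq0$ by \eqref{c5c} this would force $\partial_\mu^kh_1(t,\ccdot)\equiv0$ for all $k\geq1$, contradicting \eqref{c5b}.

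Then I would invoke the vanishing condition \eqref{c5b}. Substituting $\partial_\mu^{n_t}h_2=\partial_\mu^{n_t}h_1$ into \eqref{plan-deriv} with $k=n_t$ and setting $\mu=0$, the left-hand side equals $b_{1,1}b_{1,2}^{\,n_t}\partial_\mu^{n_t}h_1(t,0)=0$, so, as $b_{2,1},b_{2,2}\neq0$ in $Q$,
\begin{align*}
\partial_\mu^{n_t}h_1\big(t,b_{2,2}(t,x)\varphi(t,x)\big)=0,\qquad (t,x)\in Q.
\end{align*}
Fix $t\in(0,T)$. Because $h_1\in\mathbb A(\R;C^{\frac{\alpha}{2}}([0,T]))$, the map $s\mapsto\partial_\mu^{n_t}h_1(t,s)$ is real-analytic and, not identically zero by \eqref{c5b}, has a discrete zero set $Z_t\subset\R$. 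The continuous function $x\mapsto b_{2,2}(t,x)\varphi(t,x)$ maps the connected set $\overline{\Omega}$ into $Z_t$, hence is constant; since it vanishes on $\partial\Omega$ (where $\varphi=0$), the constant is $0$, and $b_{2,2}\neq0$ in $Q$ then gives $\varphi(t,\ccdot)\equiv0$. As this holds for every $t\in(0,T)$ and $\varphi(0,\ccdot)=0$ by \eqref{gauge1}, I obtain $\varphi\equiv0$, whence $b_1=S_0b_2=b_2$. The partial-data statement is handled identically: Theorem~\ref{t5} still yields $\varphi=0$ on all of $\Sigma$ by \eqref{t5b}, which is the only boundary information the argument uses, while \eqref{t5bb} plays no further role here.

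I expect the main obstacle to be the final step, converting the pointwise identity $\partial_\mu^{n_t}h_1(t,b_{2,2}\varphi)=0$ into $\varphi\equiv0$: it is precisely here that analyticity of $h_1$ in $\mu$ is indispensable, since without the discreteness of $Z_t$ and the connectedness of $\overline{\Omega}$ one could only conclude that $b_{2,2}\varphi$ takes values in the zero set of $\partial_\mu^{n_t}h_1(t,\ccdot)$, not that it is identically zero. A secondary technical point is establishing that $b_{2,1}$ and $b_{2,2}$ cannot vanish in $Q$, which is what licenses cancelling these factors; this is precisely where \eqref{c5b}–\eqref{c5c} enter, and it is also the feature that distinguishes the present argument from the case $b_{2,2}\equiv1$ of Theorem~\ref{c4}.
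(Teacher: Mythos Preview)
Your proof is correct and follows essentially the same route as the paper: apply Theorem~\ref{t1} to obtain the gauge $\varphi$, differentiate the gauge relation in $\mu$, identify $\partial_\mu^k h_1=\partial_\mu^k h_2$ via the boundary point $x_t$, then use \eqref{c5b} together with analyticity, connectedness of $\overline{\Omega}$, and $\varphi|_\Sigma=0$ to force $\varphi\equiv0$. The one minor variation is that the paper sets $\mu=-\varphi(t,x)$ in \eqref{plan-deriv}, which puts the vanishing factor $\partial_\mu^{n_t}h_1(t,0)$ on the right and lets one divide by $b_{1,1}b_{1,2}^{\,n_t}$ (nonzero by \eqref{c5c}) directly, yielding $\partial_\mu^{n_t}h_1(t,-b_{1,2}\varphi)=0$ without ever needing to argue that $b_{2,1},b_{2,2}$ are nonvanishing; your choice $\mu=0$ requires that extra (correct) step, but the payoff is the same.
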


Finally,  we consider nonlinear terms $b\in\mathbb A(\R;C^{\frac{\alpha}{2},\alpha}([0,T]\times\overline{\Omega}))$ satisfying
\begin{align}\label{sep2}
	b(t,x,\mu)=b_1(t,x)G(x,b_2(t,x)\mu)+b_0(t,x),\quad (t,x,\mu)\in[0,T]\times\Omega\times\R.
\end{align}

\begin{corollary}\label{c7} Let the conditions of Theorem \ref{t1} be fulfilled and assume that, for $j=1,2$, there exists  $G\in \mathbb A(\R;C^{\alpha}(\overline{\Omega}))$ such that
	\begin{align}\label{c7a}
		b_j(t,x,\mu)=b_{j,1}(t,x)G(x,b_{j,2}(t,x)\mu)+b_{j,0}(t,x),\quad (t,x,\mu)\in[0,T]\times\Omega\times\R.
	\end{align}
	Assume also that one of the following condition is fulfilled: 
	\begin{itemize}
		\item[(i)] We have $b_{1,2}=b_{2,2}$, and  for all $x\in\Omega$,  there exists $n_x\in\mathbb N$ and $\mu_x\in\R$ such that
		\begin{align}\label{c7b}
			\partial_\mu^{n_x} G(\ccdot,\mu_x)\not\equiv0 \text{ and }  \partial_\mu^{n_x} G(x,\mu_x)=0,\quad x\in\Omega.
		\end{align}
		
		\item[(ii)] For all $x\in\Omega$,  there exists $n_x\in\mathbb N$  such that
		\begin{align}\label{c7c}
			\partial_\mu^{n_x} G(\ccdot,\mu_x)\not\equiv0\text{ and }  \partial_\mu^{n_x} G(x,0)=0,\quad x\in\Omega.
		\end{align}
	\end{itemize} 
	Moreover, we assume that condition \eqref{c5c} is fulfilled.
	Then,  the condition \eqref{t1c} implies that $b_1=b_2$. In addition, by assuming that the conditions of Theorem \ref{t5} are fulfilled, condition \eqref{t5a} implies that $b_1=b_2$.
\end{corollary}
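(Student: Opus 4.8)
The plan is to start from the gauge relation supplied by Theorem~\ref{t1} (respectively Theorem~\ref{t5} in the partial data case) and to show that the resulting gauge function vanishes identically. Concretely, \eqref{t1c} (resp. \eqref{t5a}) yields, via \eqref{t1d}, a function $\varphi\in C^{1+\frac{\alpha}{2},2+\alpha}([0,T]\times\overline\Omega)$ satisfying \eqref{gauge1} (resp. \eqref{t5b}--\eqref{t5bb}) together with the pointwise identity
\[
b_1(t,x,\mu)=b_2(t,x,\mu+\varphi(t,x))+\rho(t,x)\partial_t\varphi(t,x)+\mathcal A(t)\varphi(t,x),\qquad (t,x,\mu)\in Q\times\R .
\]
Once I show $\varphi\equiv0$ on $Q$, the $\mu$-independent correction $\rho\partial_t\varphi+\mathcal A(t)\varphi$ vanishes and the shift becomes trivial, so the identity collapses to $b_1=b_2$. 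The whole problem is therefore reduced to proving $\varphi\equiv0$, and the only feature of $\varphi$ I will use from \eqref{gauge1}/\eqref{t5b} is the initial condition $\varphi(0,\cdot)=0$; in particular no boundary matching of the coefficients will be needed.

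Next I would differentiate the identity $k$ times in $\mu$, annihilating the $\mu$-independent correction, and insert the structural form \eqref{c7a}. Writing $\partial_\mu^k G$ for the $k$-th derivative of $G$ in its second slot, this gives
\[
b_{1,1}(t,x)\,b_{1,2}(t,x)^k\,\partial_\mu^k G\!\left(x,b_{1,2}(t,x)\mu\right)=b_{2,1}(t,x)\,b_{2,2}(t,x)^k\,\partial_\mu^k G\!\left(x,b_{2,2}(t,x)(\mu+\varphi(t,x))\right)
\]
for all $k\geq1$, $\mu\in\R$ and $(t,x)\in Q$. The decisive point is that the \emph{same}, $t$-independent profile $G$ occurs on both sides. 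In case (ii) I would evaluate at $k=n_x$ and $\mu=-\varphi(t,x)$: the right-hand side then carries the factor $\partial_\mu^{n_x}G(x,0)=0$ coming from \eqref{c7c}, so it vanishes, and dividing by $b_{1,1}(t,x)b_{1,2}(t,x)^{n_x}\neq0$ (guaranteed by \eqref{c5c}) I obtain $\partial_\mu^{n_x}G(x,-b_{1,2}(t,x)\varphi(t,x))=0$. In case (i), where $b_{1,2}=b_{2,2}$, I would instead choose $\mu$ so that the inner argument on the right equals $\mu_x$; by \eqref{c7b} the right-hand side again vanishes and \eqref{c5c} yields $\partial_\mu^{n_x}G(x,\mu_x-b_{1,2}(t,x)\varphi(t,x))=0$. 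In both cases the shifted evaluation point reduces, at $t=0$, to a genuine zero of $\partial_\mu^{n_x}G(x,\cdot)$ (namely $0$ or $\mu_x$), precisely because $\varphi(0,\cdot)=0$.

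The concluding step is a rigidity/continuation argument carried out at each fixed $x\in\Omega$. Since $G\in\mathbb A(\R;C^\alpha(\overline\Omega))$, the map $\mu\mapsto\partial_\mu^{n_x}G(x,\mu)$ is real-analytic, and the non-degeneracy in \eqref{c7b}/\eqref{c7c} ensures it is not the zero function of $\mu$; hence the relevant zero (either $0$ or $\mu_x$) is isolated. Consider $Z_x:=\{t\in[0,T]:\varphi(t,x)=0\}$. It is nonempty since $0\in Z_x$, and closed by continuity of $\varphi$. To see it is open, fix $t_0\in Z_x$; then $b_{1,2}(t,x)\varphi(t,x)\to0$ as $t\to t_0$, so for $t$ near $t_0$ the shifted point above lies in an isolating neighborhood of the zero while still being a zero of $\partial_\mu^{n_x}G(x,\cdot)$, which forces $b_{1,2}(t,x)\varphi(t,x)=0$ and hence $\varphi(t,x)=0$. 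By connectedness of $[0,T]$ we get $Z_x=[0,T]$, and as $x$ is arbitrary, $\varphi\equiv0$ on $Q$. The full and partial data conclusions follow by the identical argument, since only $\varphi(0,\cdot)=0$ was invoked.

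I expect the rigidity step to be the main obstacle: upgrading the vanishing of $\partial_\mu^{n_x}G$ at the shifted point to the vanishing of the shift itself. This is exactly where the analyticity of $G$ in $\mu$ (which isolates the relevant zero) and the initial anchoring $\varphi(0,\cdot)=0$ are essential, and it explains why one can dispense with any boundary matching such as \eqref{c5d}. A secondary bookkeeping point is that $n_x$ and $\mu_x$ depend on $x$, so the continuation is run pointwise in $x$ and one must read \eqref{c7b}/\eqref{c7c} as providing, for the chosen $x$, an order $n_x$ at which $\partial_\mu^{n_x}G(x,\cdot)$ is not identically zero in $\mu$; this is harmless because, for fixed $x$, these data are fixed and only the continuous dependence of $\varphi(\cdot,x)$ on $t$ is exploited.
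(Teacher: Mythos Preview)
Your proposal is correct and follows essentially the same route as the paper: invoke Theorem~\ref{t1} (resp.~\ref{t5}) to obtain the gauge $\varphi$, differentiate $n_x$ times in $\mu$, evaluate at a point that forces one side to hit the prescribed zero of $\partial_\mu^{n_x}G(x,\cdot)$, and then use analyticity (isolated zeros) together with continuity in $t$ and the anchor $\varphi(0,\cdot)=0$ to conclude $\varphi\equiv0$. Your open--closed connectedness argument for $Z_x$ is just a repackaging of the paper's ``a continuous map into a discrete set is constant'' step, and your reading of \eqref{c7b}/\eqref{c7c} as non-triviality of $\mu\mapsto\partial_\mu^{n_x}G(x,\mu)$ is exactly what the paper's proof uses.
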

\begin{remark}\label{r1}
	Let us observe that the results of Theorems \ref{c4} and \ref{c5} and Corollary \ref{c7} are mostly based on the conditions \eqref{c4b} and \eqref{c5c} imposed to  nonlinear terms of the form \eqref{sep}, and on the conditions \eqref{c7b} and \eqref{c7c} imposed to  nonlinear terms of the form \eqref{sep2}. These conditions are rather general and they  will be fulfilled in various situations for different class of functions. For instance, assuming that the function $h_1$ takes the form  
	$$h_1(t,\mu)=P(t,\mu)\exp(Q(t,\mu)),\quad (t,\mu)\in[0,T]\times\R$$
	with $P,Q\in \mathbb A(\R;C^{\frac{\alpha}{2}}([0,T]))$,  condition \eqref{c4b} will be fulfilled if we assume that there exists $\sigma\in C^{\alpha/2}([0,T])$ such that
	$$\left. \left[\partial_\mu P(t,\mu)+P(t,\mu)\partial_\mu Q(t,\mu)\right]\right|_{\mu=\sigma(t)}=0,\quad t\in[0,T].$$
	Such a condition will of course be fulfilled when $h_1(t,\mu)=\mu e^\mu$, $(t,\mu)\in[0,T]\times\R$.
	
	More generally, let $\sigma\in C^{\frac{\alpha}{2}}([0,T])$ be arbitrary chosen, and for each $t\in[0,T]$, consider $N_t\in\mathbb N$. Assuming that  the function $h_1$ satisfies the following property
	\begin{equation}\label{bb1}h_1(t,\mu)=\sum_{k=0}^{N_t-1}a_k(t)(\mu-\sigma(t))^k+ \underset{\mu\to\sigma(t)}{\mathcal O}\left((\mu-\sigma(t))^{N_t+1}\right),\quad t\in[0,T],\end{equation}
	with $a_0,\ldots,a_{N-1}\in C^{\frac{\alpha}{2}}([0,T])$ arbitrary chosen, one can easily check that condition \eqref{c4b} will be fulfilled since we have
	$$\partial_\mu^{N_t} h_1(t,\sigma(t))=0,\quad t\in[0,T].$$
	Condition \eqref{c5c} will be fulfilled under the same condition provided that $h_1$ satisfies \eqref{bb1} with $\sigma\equiv0$. Moreover, let $g\in C^{\alpha}(\overline{\Omega})$, and for each $x\in\Omega$, consider $N_x\in\mathbb N$. Assuming that  the function $G$ in \eqref{c7b} satisfies the following property
	\begin{equation}\label{bb2}G(x,\mu)=\sum_{k=0}^{N_x-1}\beta_k(x)(\mu-g(x))^k+ \underset{\mu\to g(x)}{\mathcal O}\left((\mu-g(x))^{N_x+1}\right),\quad x\in \Omega,\end{equation}
	with  the arbitrary chosen functions  $\beta_0,\ldots,\beta_{N-1}\in C^{\alpha}(\overline{\Omega})$, it is clear that condition \eqref{c7b} will be fulfilled since we have
	$$\partial_\mu^{N_x} G(x,g(x))=0,\quad x\in \Omega.$$ The same is true for  condition \eqref{c7c} when $G$ satisfies \eqref{bb2} with $g\equiv0$.
\end{remark}

Finally, via previous observations, we can also determine the order coefficients for \emph{linear} parabolic equations. 

\begin{corollary}[Global uniqueness with partial data]\label{cor: unique_poten_partial}
	Adopting all notations in Theorem \ref{t5}, let $q_j=q_j(t,x)\in C^\infty([0,T]\times \overline{\Omega})$ and $b_j(t,x,\mu)=q_j(t,x)\mu$ for $j=1,2$. Then \eqref{t1c} implies $q_1=q_2$ in $Q$.
\end{corollary}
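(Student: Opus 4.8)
The plan is to view the linear potential term $b_j(t,x,\mu)=q_j(t,x)\mu$ as a degree-one, hence analytic, instance of the nonlinearities already handled by Theorem \ref{t5}, and then to break the associated gauge by a direct application of Corollary \ref{c1} with the trivial shift $\kappa\equiv 0$. First I would check that all hypotheses of Theorem \ref{t5} (and therefore of Corollary \ref{c1}) are in force. The map $\mu\mapsto b_j(t,x,\mu)=q_j(t,x)\mu$ is a polynomial of degree one, hence analytic, and its coefficient satisfies $q_j\in C^\infty([0,T]\times\overline{\Omega})\subset C^{\frac{\alpha}{2},\alpha}([0,T]\times\overline{\Omega})$, so $b_j\in\mathbb A(\R;C^{\frac{\alpha}{2},\alpha}([0,T]\times\overline{\Omega}))$. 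Condition \eqref{b} holds automatically, since $b_j(0,x,0)=q_j(0,x)\cdot 0=0$. Moreover, because $b_j(t,x,0)=0$ identically, the function $u\equiv 0$ solves \eqref{eq1} with $f=0$; as the equation $\partial_t u-\Delta u+q_j u=0$ is linear, standard parabolic theory gives its unique solvability, so one may take $f_0=0\in\mathcal K_0$ (with $u_{j,0}\equiv 0$), and the partial DN maps $\mathcal N_{b_j}$ are well defined on $\mathbb B(f_0,\epsilon)$.

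Next I would invoke Corollary \ref{c1} with $\kappa\equiv 0\in C^{\frac{\alpha}{2},\alpha}([0,T]\times\overline{\Omega})$. Its hypothesis \eqref{c1a} reads $b_1(t,x,0)=b_2(t,x,0)$ for $(t,x)\in[0,T]\times\overline{\Omega}$, which holds since both sides vanish. The partial-data assertion of Corollary \ref{c1} then states that \eqref{t5a} forces $b_1=b_2$ as elements of $\mathbb A(\R;C^{\frac{\alpha}{2},\alpha}([0,T]\times\overline{\Omega}))$. Evaluating at $\mu=1$ gives $q_1(t,x)=q_2(t,x)$, i.e. $q_1=q_2$ in $Q$, as claimed.

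It is instructive to record the underlying mechanism, which also makes transparent where the (modest) work lies. By Theorem \ref{t5}, the hypothesis \eqref{t5a} produces $\varphi$ satisfying \eqref{t5b}--\eqref{t5bb} with $b_1=S_\varphi b_2$, that is $q_1(t,x)\mu=q_2(t,x)(\mu+\varphi(t,x))+\partial_t\varphi-\Delta\varphi$ for all $\mu\in\R$. Matching the coefficient of $\mu$ already yields $q_1=q_2$, while the $\mu$-independent part shows that $\varphi$ solves the homogeneous problem $\partial_t\varphi-\Delta\varphi+q_2\varphi=0$ with $\varphi(0,\ccdot)=0$ and $\varphi|_\Sigma=0$, whence $\varphi\equiv 0$ by uniqueness for the linear forward problem (so no Neumann overdetermination is even needed here). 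The entire nontrivial content is thus front-loaded into Theorem \ref{t5} and Corollary \ref{c1}; the only point requiring care—the main, if slight, obstacle—is confirming that the higher-order linearization scheme behind those results degenerates correctly to a single linear profile when the nonlinearity has degree one. This is guaranteed because the analyticity hypothesis is satisfied trivially and the first linearization already detects the full potential $q_j$, so the recovery reduces to the elementary coefficient comparison above.
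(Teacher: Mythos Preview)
Your proposal is correct and is essentially the paper's own argument: both routes start from the gauge relation supplied by Theorem~\ref{t5}, then specialize at $\mu=0$ (your invocation of Corollary~\ref{c1} with $\kappa\equiv 0$ is exactly this step, repackaged) to force $\varphi\equiv 0$ via uniqueness for the linear homogeneous IBVP, after which $q_1=q_2$ follows. Your side remark that comparing the coefficients of $\mu$ in $q_1\mu=q_2(\mu+\varphi)+\partial_t\varphi-\Delta\varphi$ already yields $q_1=q_2$ without first proving $\varphi\equiv 0$ is a nice minor streamlining, but the underlying mechanism is the same.
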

We mention that we could also prove that the assumptions of Theorem \ref{t1} and $b_j(t,x,\mu)=q_j(t,x)\mu$, $j=1,2$, implies $q_1=q_2$. 

\subsection{Comments about our results}

To the best of our knowledge, Theorems \ref{t1} and \ref{t5} give the first positive answer to the inverse problem (IP1) for semilinear parabolic equations. In addition, the results of Theorem \ref{t1} and \ref{t5} extend the analysis of \cite{liimatainen2022uniqueness} that considered a problem similar to (IP1) for elliptic equations,  but which did not fully answered to the question raised by (IP1). In that sense, Theorems \ref{t1} and \ref{t5} give the first positive answer to  (IP1) for a class of elliptic PDEs as well. While Theorem \ref{t1} is stated for general class of parabolic equations, Theorem \ref{t5} gives a result with measurement restricted to a neighborhood of the back set with respect to a source $x_0\in\R^n\setminus\overline{\Omega}$ in the spirit of the most precise  partial data results stated for linear elliptic equations such as  \cite{KSU}. Note that in contrast to \cite{KSU} the source $x_0$ is not necessary outside the convex hull of $\overline{\Omega}$ and, as observed in \cite{KSU}, when $\Omega$ is convex the measurements of Theorem \ref{t5} can be restricted to any open set of $\partial\Omega$. Even for linear equations, Theorems \ref{t1} and \ref{t5} improve in precision and generality the earlier works of \cite{CK,Is0} dealing with determination of time dependent coefficients appearing in linear parabolic equations. 

We gave a positive answer to the problem (IP2) and show that the gauge breaks for three different classes of semilinear terms: 
\begin{itemize}
	\item[1)] Semilinear terms with prescribed information in Corollaries \ref{c1} and \ref{c2},
	
	\item[2)] Polynomial semilinear terms in Theorem \ref{c3},
	
	\item[3)] Semilinear terms with separated variables of the form \eqref{sep} or \eqref{sep2} in Theorems \ref{c4} and \ref{c5} and in Corollary \ref{c7}.
\end{itemize}  
This seems to be the most complete overview of situations where one can give a positive answer to problem (IP2). While \cite{liimatainen2022uniqueness} considered also such phenomenon for polynomial nonlinear terms and some specific examples, the conditions of Corollary \ref{c1} and \ref{c2}, Theorems \ref{c4} and \ref{c5} and Corollary \ref{c7}, leading to a positive answer for (IP2), seems to be  new. In Remark \ref{r1} we gave several concrete and general examples of semilinear terms satisfying the conditions of Theorems \ref{c4}, \ref{c5} and Corollary \ref{c7}.

The proof of our results are based on a combination of the higher order linearization technique, application of suitable class of geometric optics solutions for parabolic equations, Carleman estimates, properties of holomorphic functions and different properties of parabolic equations. Theorem \ref{t1} is deduced from the linearized result of Proposition \ref{p2} that we prove by using geometric optics solutions for parabolic equations. These solutions are built by using  the energy estimate  approach introduced in the recent work of \cite{Fe}. This allows us to consider problem (IP1) for general class of semilinear parabolic equations with variable coefficients. In Theorem \ref{t5}, we combine this class of geometric optics solutions with a  Carleman estimate with boundary terms stated in Lemma \ref{l7} in order  to restrict the boundary measurements to a part of the boundary. Note that the weight under consideration in Lemma \ref{l7} is not a limiting Carleman weight for parabolic equations. This is one reason why we can not apply such Carleman estimates for making also a restriction on the support of the Dirichlet input in Theorem \ref{t5}.

It is worth mentioning that our results for problem (IP1) and (IP2) can be applied to \emph{inverse source problems} for nonlinear parabolic equations. This important application is discussed in Section \ref{sec_simul}. There we show how the nonlinear interaction allows to solve this problem for general classes of source terms, depending simultaneously on the time and space variables. Corresponding problem problem for linear equations can not be solved uniquely (see Example \ref{rmk: example} or  e.g. \cite[Appendix A]{KSXY}). In that sense our analysis exhibits a new consequences of the nonlinear interaction, already considered for example in \cite{FO20,KLU,LLLS,LLLS2019partial, LLST2022inverse,KU,KU2019partial,FLL2021inverse}), by showing how  nonlinearity can help for the resolution of inverse source problems for parabolic equations.

We remark that while Theorem \ref{t1} is true for $n\geq2$,
we can only prove Theorem \ref{t5} for dimension $n\geq3$. The fact that we can not prove Theorem \ref{t5} for $n=2$ is related to the Carleman estimate of Lemma \ref{l7} that we can only derive for $n\geq3$. Since this Carleman estimate is a key ingredient in the proof of Theorem \ref{t5}, we need to exclude the case $n=2$ in the statement of this result.

Finally, let us observed that, under the suitable assumption of simplicity stated in Assumption \ref{assump simple}, the result of Theorem \ref{t1} can be applied to the determination of a semilinear term for reaction diffusion equations on a Riemannian manifold with boundary equipped with a time-dependent metric.

\subsection{Outline of the paper}
This article is organized as follows. In Section \ref{sec_prel}, we consider the forward problem by proving the well-posedness of \eqref{eq1} under suitable condition, and we recall some properties of the higher order linearization method for parabolic equations. Section \ref{sec_proof_thm} is devoted to the proof of Theorem \ref{t1} while in Section \ref{sec_smoothgo} we prove Proposition \ref{p2}. In Section \ref{sec_proof thm 1.2} we prove Theorem \ref{t5}, and in Section \ref{sec_gau_breaking} we consider our results related to problem (IP2). Finally, in Section \ref{sec_simul} we discuss the applications of our results to inverse source problems for parabolic equations. In the Appendix \ref{sec_app} the proof of the Carleman estimates of Lemma \ref{l7}.

\section{The forward problem and higher order linearization}\label{sec_prel}
Recall that in this article we assume that there is a solution $u_0$ to \eqref{eq1} corresponding to a lateral boundary data $f_0$.

\subsection{Well-posedness for Dirichlet data close to $f_0$}

In this subsection, we consider the well-posedness for the problem \eqref{eq1}, whenever the boundary datum $f$ is sufficiently close to $f_0$ with respect to $C^{1+\frac{\alpha}{2},2+\alpha}([0,T]\times \partial\Omega)$. For this purpose, we consider the Banach space $\mathcal{K}_0$ with the norm of the space $C^{1+\frac{\alpha}{2},2+\alpha}([0,T]\times \partial\Omega)$. Our local well-posedness result is stated as follows.

\begin{prop}\label{p1} Let $a:=(a_{ik})_{1 \leq i,k \leq n} \in C^\infty([0,T]\times\overline{\Omega};\R^{n\times n})$ satisfy \eqref{ell},  
	$\rho \in C^\infty([0,T]\times\overline{\Omega};\R_+)$ and $b\in C^\infty(\R;C^{\frac{\alpha}{2},\alpha}([0,T]\times\overline{\Omega}))$. We assume also that there exists a boundary value $f_0\in \mathcal K_0$ such that the problem \eqref{eq1} with $f=f_0$  admits a unique solution  $u_0\in C^{1+\frac{\alpha}{2},2+\alpha}([0,T]\times\overline{\Omega}))$. Then  there exists $\epsilon>0$ depending on  $a$, $\rho$, $b$, $f_0$, $\Omega$, $T$, such that, for all $f\in \mathbb B(f_0,\epsilon)$, the problem \eqref{eq1} admits a unique solution $u_f\in C^{1+\frac{\alpha}{2},2+\alpha}([0,T]\times\overline{\Omega}))$ satisfying
	\begin{align}\label{p1a}
		\norm{u_f-u_0}_{C^{1+\frac{\alpha}{2},2+\alpha}([0,T]\times\overline{\Omega}))}\leq C\norm{f-f_0}_{ C^{1+\frac{\alpha}{2},2+\alpha}([0,T]\times\partial\Omega)}.
	\end{align}
	Moreover, the map $\mathbb B(f_0,\epsilon)\ni f\mapsto u_f\in C^{1+\frac{\alpha}{2},2+\alpha}([0,T]\times\overline{\Omega}))$ is $C^\infty$ in the Fr\'echet sense.
	
\end{prop}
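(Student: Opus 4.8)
The plan is to view \eqref{eq1} as a perturbation of the known solution $u_0$ and to invoke the implicit function theorem in H\"older spaces, which produces at once existence, local uniqueness, the estimate \eqref{p1a} and the $C^\infty$ Fr\'echet dependence. First I would substitute $u=u_0+w$ and $f=f_0+g$ with $g\in\mathcal K_0$. Since $u_0$ solves \eqref{eq1} with $f=f_0$, the function $w$ must solve
\begin{align*}
	\begin{cases}
		\rho\,\partial_t w+\mathcal A(t)\,w+\big(b(t,x,u_0+w)-b(t,x,u_0)\big)=0 & \text{in }Q,\\
		w=g & \text{on }\Sigma,\\
		w(0,\cdot)=0 & \text{in }\Omega.
	\end{cases}
\end{align*}
Setting $\mathcal X:=\{w\in C^{1+\frac\alpha2,2+\alpha}([0,T]\times\overline\Omega):\, w(0,\cdot)=0\text{ in }\Omega\}$, I define
\[
	\Psi(w,g):=\big(\rho\,\partial_t w+\mathcal A w+b(\cdot,\cdot,u_0+w)-b(\cdot,\cdot,u_0),\ w|_\Sigma-g\big),
\]
so that $\Psi(0,0)=0$, and solving \eqref{eq1} for $u$ near $u_0$ is equivalent to solving $\Psi(w,g)=0$ for $w$ near $0$ with $g$ near $0$ in $\mathcal K_0$.

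The target of $\Psi$ is a compatibility-constrained Banach space
\[
	\mathcal Y:=\Big\{(F,h)\in C^{\frac\alpha2,\alpha}(\overline Q)\times\mathcal B:\ F(0,x)=\rho(0,x)\,\partial_t h(0,x),\ x\in\partial\Omega\Big\},
\]
where $\mathcal B:=\{h\in C^{1+\frac\alpha2,2+\alpha}(\Sigma):\,h(0,\cdot)=0\}$. I would check that $\Psi$ indeed lands in $\mathcal Y$: because $w(0,\cdot)=0$ forces $\mathcal A w(0,\cdot)=0$ and makes the difference of the $b$-terms vanish at $t=0$, the trace of the first component at $(0,x)$ reduces to $\rho(0,x)\partial_t w(0,x)$, which matches $\rho(0,x)\partial_t(w|_\Sigma-g)(0,x)$ since $\partial_t g(0,\cdot)=0$ for $g\in\mathcal K_0$. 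Next I need $\Psi$ to be $C^\infty$ between Banach spaces; the only nonroutine point is the substitution operator $w\mapsto b(\cdot,\cdot,u_0+w)$, whose smoothness from $\mathcal X$ into $C^{\frac\alpha2,\alpha}(\overline Q)$ follows from the hypothesis $b\in C^\infty(\R;C^{\frac\alpha2,\alpha}([0,T]\times\overline\Omega))$, the Banach-algebra property of $C^{\frac\alpha2,\alpha}(\overline Q)$, and Taylor expansion of $b$ in $\mu$. Differentiating at $(0,0)$ yields the linear parabolic operator
\[
	D_w\Psi(0,0)v=\big(\rho\,\partial_t v+\mathcal A v+q\,v,\ v|_\Sigma\big),\qquad q:=\partial_\mu b(\cdot,\cdot,u_0)\in C^{\frac\alpha2,\alpha}(\overline Q).
\]

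The \emph{crux} is that $D_w\Psi(0,0):\mathcal X\to\mathcal Y$ is a Banach space isomorphism. This is precisely the assertion that the linear parabolic Dirichlet problem $\rho\partial_t v+\mathcal A v+qv=F$, $v|_\Sigma=h$, $v(0,\cdot)=0$ has a unique $C^{1+\frac\alpha2,2+\alpha}$ solution depending boundedly on $(F,h)$, under the zeroth and first order compatibility relations $h(0,\cdot)=0$ and $F(0,\cdot)=\rho(0,\cdot)\partial_t h(0,\cdot)$ on $\partial\Omega$ --- exactly the constraints built into $\mathcal Y$. Existence together with the Schauder estimate is granted by the classical linear parabolic theory \cite{LSU} already invoked above, and uniqueness follows from the energy estimate for the linear equation. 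The implicit function theorem then provides $\epsilon>0$ and a $C^\infty$ map $g\mapsto w(g)\in\mathcal X$ with $w(0)=0$ and $\Psi(w(g),g)=0$ for $\|g\|<\epsilon$, unique near $0$; putting $u_f:=u_0+w(f-f_0)$ gives the sought solution, whose local uniqueness is the uniqueness part of the theorem. Finally, $C^1$-ness of $g\mapsto w(g)$ with $w(0)=0$ yields $\|w(g)\|_{\mathcal X}\le C\|g\|$ by the mean value inequality, which is \eqref{p1a}, while the $C^\infty$ regularity transfers directly to $f\mapsto u_f$.

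I expect the main obstacle to be the compatibility bookkeeping: choosing the constrained target $\mathcal Y$ so that the linearization is \emph{surjective} (not merely injective), and verifying that $\Psi$ maps into it. Both hinge on the first order parabolic compatibility relation at $t=0$ on $\partial\Omega$. By comparison, the invertibility of the linear operator and the smoothness of the superposition operator are standard once the functional setting is fixed. As a consistency check, the standing hypothesis \eqref{b} is in fact forced by the mere existence of $u_0$ in this regularity through the same first order relation, so no additional assumption on $b$ is needed for the argument.
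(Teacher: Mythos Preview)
Your proposal is correct and follows essentially the same strategy as the paper: shift by the known solution $u_0$, set up a nonlinear map whose zeros correspond to solutions of \eqref{eq1}, check smoothness of the superposition operator, and invoke the implicit function theorem after showing the linearization is an isomorphism via classical linear parabolic Schauder theory. The only cosmetic difference is in the bookkeeping of the compatibility conditions: the paper restricts the \emph{domain} by working in $\mathcal H_0:=\{w\in C^{1+\frac\alpha2,2+\alpha}:\,w|_{t=0}=0,\ \partial_t w|_{\{0\}\times\partial\Omega}=0\}$ and takes the decoupled target $\mathcal L_0\times\mathcal K_0$ with $\mathcal L_0:=\{F:\,F|_{\{0\}\times\partial\Omega}=0\}$, whereas you keep the larger domain $\mathcal X$ and encode the first-order compatibility as a coupling $F(0,x)=\rho(0,x)\partial_t h(0,x)$ in the target $\mathcal Y$. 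Both choices make the linearized map a Banach isomorphism by \cite[Theorem 5.2, Chapter IV]{LSU}, and your observation that \eqref{b} is already forced by the existence of $u_0\in C^{1+\frac\alpha2,2+\alpha}$ with $f_0\in\mathcal K_0$ is correct.
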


\begin{proof} 
	Let us first observe that we may look for a solution $u_f$ by splitting it into two terms by $u_f=u_0+v$, where $v$ solves
	\begin{align}\label{eq2}
		\begin{cases}
			\rho\s \partial_t v+\mathcal A(t)  v+  b(t,x,v+u_0)-b(t,x,u_0)=0  & \mbox{in}\ (0,T)\times\Omega ,
			\\
			v=h &\mbox{on}\ (0,T)\times\partial\Omega,\\
			v(0,x)=0 &\text{for } x\in\Omega,
		\end{cases}
	\end{align}
	with $h:=f-f_0$.
	Therefore, it is enough for our purpose to show that there exists $\epsilon>0$ depending on $a$, $\rho$, $b$, $f_0$, $\Omega$, $T$, such that for $h\in \mathbb B(0,\epsilon)$ the problem \eqref{eq2} admits a unique solution $v_h\in C^{1+\frac{\alpha}{2},2+\alpha}([0,T]\times\overline{\Omega})$ satisfying
	\begin{align}\label{p1b}
		\norm{v_h}_{C^{1+\frac{\alpha}{2},2+\alpha}([0,T]\times\overline{\Omega})}\leq C\norm{h}_{C^{1+\frac{\alpha}{2},2+\alpha}([0,T]\times\partial\Omega)}.
	\end{align}
	We introduce the spaces
	\begin{align*}
		\begin{split}
			\mathcal H_0:=&\left\{u\in C^{1+\frac{\alpha}{2},2+\alpha}([0,T]\times\overline{\Omega}):\ u|_{\{0\}\times\overline{\Omega}}\equiv 0,\ \left. \partial_tu\right|_{\{0\}\times\partial\Omega}\equiv 0\right\}, \\
			\mathcal L_0:=&\left\{F\in C^{\frac{\alpha}{2},\alpha}([0,T]\times\overline{\Omega}):\ F|_{\{0\}\times\partial\Omega}\equiv 0\right\}.
		\end{split}
	\end{align*}
	Then, let us introduce the map $\mathcal G$ defined by 
	\begin{align*}
		\mathcal G:\mathcal K_0\times\mathcal H_0 &\to \mathcal L_0\times\mathcal K_0 ,\\
		(h,v) &\mapsto\left(\rho \s\partial_t v+\mathcal A(t)  v+ b(t,x,v+u_0)-b(t,x,u_0), \, v|_{\Sigma}-h\right).
	\end{align*}
	
	We will find a solution to \eqref{eq1} by applying the implicit function theorem to the map $\mathcal G$. Using the fact that $b\in C^\infty(\R;C^{\frac{\alpha}{2},\alpha}([0,T]\times\overline{\Omega}))$, it follows that the map $\mathcal G$ is $C^\infty$ on $\mathcal K_0\times\mathcal H_0$ in the Fr\'echet sense. Moreover, we have $\mathcal G(0,0)=(0,0)$ and
	$$\partial_v\mathcal G(0,0)w=\left(\rho\s \partial_t w+\mathcal A(t)  w+ \partial_\mu b(t,x,u_0)w, \,  w|_{\Sigma}\right).$$
	In order to apply the implicit function theorem, we will prove that the map $\partial_v\mathcal G(0,0)$ is an isomorphism from $\mathcal H_0$ to $\mathcal L_0\times\mathcal K_0$. For this purpose, let us fix $(F,h)\in \mathcal L_0\times\mathcal K_0$ and let us consider the linear parabolic problem
	\begin{align}\label{eq101}
		\begin{cases}
			\rho \partial_t w+\mathcal A(t)  w+ \partial_\mu b(t,x,u_0)w=F(t,x)  & \mbox{in}\ Q ,
			\\
			w=h &\mbox{on}\ \Sigma,\\
			w(0,x)=0 &\ x\in\Omega.
		\end{cases}
	\end{align}
	Applying \cite[Theorem 5.2, Chapter IV, page 320]{LSU}, 
	we deduce that problem \eqref{eq101} admits a unique solution $w\in \mathcal H_0$ satisfying
	$$\norm{w}_{C^{1+\frac{\alpha}{2},2+\alpha}([0,T]\times\overline{\Omega})}\leq C\left(\norm{F}_{C^{\frac{\alpha}{2},\alpha}([0,T]\times\overline{\Omega})}+\norm{h}_{C^{1+\frac{\alpha}{2},2+\alpha}([0,T]\times\partial\Omega)}\right),$$
	for some constant $C>0$ independent of $w$, $F$ and $h$.
	From this result we deduce that $\partial_v\mathcal G(0,0)$ is an isomorphism from $\mathcal H_0$ to $\mathcal L_0\times\mathcal K_0$.

	Therefore, applying the implicit function theorem (see e.g. \cite[Theorem 10.6]{renardy2006introduction}), we deduce that there exists $\epsilon>0$ depending on  $a$, $b$, $\rho$, $f_0$, $\Omega$, $T$, and a smooth map $\psi$ from $\mathbb B(0,\epsilon)$ to $\mathcal H_0$, such that, for all $h\in \mathbb B(0,\epsilon)$, we have $\mathcal G(h,\psi(h))=(0,0)$.
	This proves that $v=\psi(h)$ is a solution of \eqref{eq2} for all $h\in \mathbb B(0,\epsilon)$. 
	
	For the uniqueness of the solution of \eqref{eq2}, let us consider $v_1 \in C^{1+\frac{\alpha}{2},2+\alpha}([0,T]\times\overline{\Omega})$ to be a solution of \eqref{eq2}, and let us show that $v_1=v$. For this purpose, we fix $w=v_1-v$ and notice that $w$ solves the following initial boundary value problem
	\begin{align}\label{eq22}
		\begin{cases}
			\rho\s \partial_t w+\mathcal A(t)  w+  q(t,x)w=0  & \mbox{in}\ Q ,
			\\
			w=0 &\mbox{on}\ \Sigma,\\
			w(0,x)=0 &\text{for } x\in\Omega,
		\end{cases}
	\end{align}
	with
	$$q(t,x)=\int_0^1\partial_\mu b(t,x,sv_1(t,x)+(1-s)v(t,x)+u_0(t,x))\, ds,\quad (t,x)\in Q.$$
	Then the uniqueness of the solutions of \eqref{eq22} implies that $w\equiv0$, and by the same way that $v=v_1$. Therefore, $v=\psi(h)$ is the unique solution of \eqref{eq2}.  Combining this with the fact that $\psi$ is smooth from $\mathbb B(0,\epsilon)$ to $\mathcal H_0$ and $\psi(0)=0$, we obtain \eqref{p1a}.  Finally, recalling that, for all $f\in \mathbb B(f_0,\epsilon)$, $u_f=u_0+\psi(f-f_0)$ with $\psi$ a $C^\infty$ map from $\mathbb B(0,\epsilon)$ to $C^{1+\frac{\alpha}{2},2+\alpha}([0,T]\times\overline{\Omega}))$, we deduce that the map $\mathbb B(f_0,\epsilon)\ni f\mapsto u_f\in C^{1+\frac{\alpha}{2},2+\alpha}([0,T]\times\overline{\Omega}))$ is $C^\infty$.
	This completes the proof of the proposition.
\end{proof}

\subsection{Linearizations of the problem}
In this subsection we assume that the conditions of Proposition \ref{p1} are fulfilled. Let us introduce  $m\in\mathbb N\cup\{0\}$ and consider the parameter $s=(s_1,\ldots,s_{m+1})\in(-1,1)^{m+1}$. 
Fixing $h_1,\ldots,h_{m+1}\in \mathbb B(0,\frac{\epsilon}{m+1})$,
we consider $u=u_{s}$ the solution of
\begin{align}\label{eq4}
	\begin{cases}
		\rho(t,x) \partial_t u(t,x)+\mathcal A(t)  u(t,x)+ b(t,x,u(t,x))=0  & \mbox{in}\ Q ,
		\\
		u=f_0+\displaystyle\sum_{i=1}^{m+1}s_ih_i &\mbox{on}\ \Sigma,\\
		u(0,x)=0 &\text{for } x\in\Omega.
	\end{cases}
\end{align}
Following the proof of Proposition \ref{p1}, we know that the map $s\mapsto u_s$ is lying in 
$$C^\infty\left((-1,1)^{m+1};C^{1+\frac{\alpha}{2},2+\alpha}([0,T]\times\overline{\Omega})\right),$$
then we are able to differentiate \eqref{eq4} with respect to the $s$ parameter.

Let us introduce
the solution of the first linearized problem
\begin{align}\label{eq5}
	\begin{cases}
		\rho(t,x) \partial_t v+\mathcal A(t)  v+ \partial_\mu b(t,x,u_0)v=0  & \mbox{in}\ Q ,
		\\
		v=h &\mbox{on}\ \Sigma,\\
		v(0,x)=0 &\text{for } x\in\Omega.
	\end{cases}
\end{align}
Using the facts that $u_s|_{s=0}=u_0$ and that the map $s\mapsto u_s$ is smooth, we see that that if $v_{\ell}$ is the solution of \eqref{eq5} with    $h=h_\ell$, $\ell=1,\ldots,m+1$,  then we have
\begin{align}\label{l2a}
	\left.\partial_{s_\ell}u_{s}\right|_{s=0}= v_{\ell},\quad \ell=1,\ldots,m+1,
\end{align}
in the sense of functions taking values in $C^{1+\frac{\alpha}{2},2+\alpha}([0,T]\times\overline{\Omega})$.

Now let us turn to the expression $\partial_{s_{\ell_1}}\partial_{s_{\ell_2}}u_{s}\big|_{s=0}$, $\ell_1,\ell_2=1,\ldots,m+1$.
For this purpose, we introduce the function
$w_{\ell_1,\ell_2}\in C^{1+\frac{\alpha}{2},2+\alpha}([0,T]\times\overline{\Omega})$ solving the second linearized problem
\begin{align}\label{eq6}
	\begin{cases}
		\rho(t,x) \partial_t w_{\ell_1,\ell_2}+\mathcal A(t)  w_{\ell_1,\ell_2}+ \partial_\mu b(t,x,u_0) w_{\ell_1,\ell_2}=-\partial_\mu^2 b(t,x,u_0)v_{\ell_1}v_{\ell_2}  & \mbox{in}\ Q,
		\\
		w_{\ell_1,\ell_2}=0 &\mbox{on}\ \Sigma,\\
		w_{\ell_1,\ell_2}(0,x)=0 &\text{for } x\in\Omega.
	\end{cases}
\end{align}
Repeating the above arguments, we obtain that 
\begin{align}\label{l3a}
	\left. \partial_{s_{\ell_1}}\partial_{s_{\ell_2}}u_{s}\right|_{s=0}= w_{\ell_1,\ell_2}
\end{align}
is the solution to \eqref{eq6}.

Then, one can prove by iteration the following result.

\begin{lem}[Higher order linearizations]\label{l5} Let $m\in\mathbb N$. The function 
	\begin{align}\label{l5a}
		w^{(m+1)}=\left.\partial_{s_1}\partial_{s_2}\cdots\partial_{s_{m+1}}u_{s}\right|_{s=0}
	\end{align}
	is well defined in the sense of functions taking values in $C^{1+\frac{\alpha}{2},2+\alpha}([0,T]\times\overline{\Omega})$. Moreover, $w^{(m+1)}$ solves the $(m+1)$-th order linearized problem 
	\begin{align}\label{eq7}
		\begin{cases}
			\rho(t,x) \partial_t w^{(m+1)}+\mathcal A(t)  w^{(m+1)}+ \partial_\mu b(t,x,u_0) w^{(m+1)}=H^{(m+1)}  & \mbox{in}\ Q,
			\\
			w^{(m+1)}=0 &\mbox{on}\ \Sigma,\\
			w^{(m+1)}(0,x)=0 &\mbox{for } x\in\Omega.
		\end{cases}
	\end{align}
	Here,  we have 
	\begin{align}\label{Hm}
		H^{(m+1)}=-\partial_\mu^{m+1} b(t,x,u_0)v_{1}\cdots v_{m+1} +K^{(m+1)},
	\end{align}
	where all the functions are evaluated at the point $(t,x)$ and $K^{(m+1)}(t,x)$ depends only on $a$, $\rho$, $\Omega$, $T$, 
	$\partial_\mu^k b(t,x,u_0)$,  $k=0,\ldots,m$, $v_1,\ldots, v_{m+1}$, and $w^{(k+1)}$, for $k=1,\ldots, m-1$, 
	which are respectively the solution of \eqref{eq4} with    $h=h_\ell$, $\ell=1,\ldots,m+1$.
\end{lem}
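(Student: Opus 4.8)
The plan is to prove Lemma~\ref{l5} by induction on $m$, differentiating the nonlinear equation~\eqref{eq4} in the parameters $s_1,\ldots,s_{m+1}$ and evaluating at $s=0$. The cases $m=0$ and $m=1$ are already established in~\eqref{eq5}--\eqref{l3a}, so these serve as the base of the induction and, importantly, reveal the structure to be propagated. The key analytic input that makes the differentiation legitimate is Proposition~\ref{p1}: since the solution map $s\mapsto u_s$ is $C^\infty$ from $(-1,1)^{m+1}$ into $C^{1+\frac{\alpha}{2},2+\alpha}([0,T]\times\overline{\Omega})$, all the mixed partial derivatives $w^{(m+1)}=\partial_{s_1}\cdots\partial_{s_{m+1}}u_s|_{s=0}$ exist as functions in that Hölder space, which settles the well-definedness claim immediately.

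First I would apply the operator $\partial_{s_1}\cdots\partial_{s_{m+1}}$ to the PDE $\rho\partial_t u_s+\mathcal A(t)u_s+b(t,x,u_s)=0$. The terms $\rho\partial_t u_s$ and $\mathcal A(t)u_s$ are linear in $u_s$, so they produce $\rho\partial_t w^{(m+1)}+\mathcal A(t)w^{(m+1)}$ after setting $s=0$. The only nontrivial contribution comes from differentiating the composite term $b(t,x,u_s)$, for which I would invoke the Fa\`a di Bruno formula: the $(m+1)$-fold derivative of $\mu\mapsto b(t,x,\mu)$ composed with $s\mapsto u_s$ is a sum over partitions of the index set $\{1,\ldots,m+1\}$, each summand being a product $\partial_\mu^k b(t,x,u_s)$ times factors $\partial_{s_I}u_s$ indexed by the blocks $I$ of the partition. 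Setting $s=0$, the single highest-order term from the finest partition (all indices in one block, $k=1$) yields $\partial_\mu b(t,x,u_0)\,w^{(m+1)}$, which I move to the left-hand side to form the linear parabolic operator; the coarsest partition (all singletons, $k=m+1$) contributes exactly $\partial_\mu^{m+1}b(t,x,u_0)\,v_1\cdots v_{m+1}$, matching the displayed leading term in~\eqref{Hm}, since $\partial_{s_\ell}u_s|_{s=0}=v_\ell$ by~\eqref{l2a}. All remaining partitions involve derivatives $\partial_\mu^k b$ with $1\le k\le m$ and products of strictly lower-order derivatives $\partial_{s_I}u_s|_{s=0}$, which by~\eqref{l2a}, \eqref{l3a} and the inductive hypothesis are expressible through $v_1,\ldots,v_{m+1}$ and the lower linearizations $w^{(k+1)}$, $k\le m-1$. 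Collecting these gives the remainder $K^{(m+1)}$ with precisely the claimed dependence.

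The homogeneous initial and lateral boundary conditions follow by differentiating the boundary data $u_s=f_0+\sum s_ih_i$ on $\Sigma$ and the initial condition $u_s(0,\cdot)=0$: since $m+1\ge 2$, applying $\partial_{s_1}\cdots\partial_{s_{m+1}}$ annihilates the affine boundary term, leaving $w^{(m+1)}=0$ on $\Sigma$, and similarly $w^{(m+1)}(0,x)=0$. Thus $w^{(m+1)}$ solves~\eqref{eq7}, and the linear well-posedness result already used in Proposition~\ref{p1} (namely \cite[Theorem 5.2, Chapter IV]{LSU}) identifies $w^{(m+1)}$ as \emph{the} solution of this problem, closing the induction.

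The main obstacle I anticipate is bookkeeping rather than analysis: one must verify carefully that in the Fa\`a di Bruno expansion every intermediate mixed derivative $\partial_{s_I}u_s|_{s=0}$ for a proper nonempty subset $I\subsetneq\{1,\ldots,m+1\}$ is indeed a lower-order linearization already controlled by the inductive hypothesis, and that no factor of $\partial_\mu^{m+1}b$ survives outside the designated leading term. A subtle point worth checking is that the remainder $K^{(m+1)}$ genuinely depends only on $\partial_\mu^k b(t,x,u_0)$ for $k\le m$ and on the $w^{(k+1)}$ with $k\le m-1$ (not $k=m$), which follows because any partition other than the finest one splits $\{1,\ldots,m+1\}$ into at least two blocks, each of size at most $m-1$, so the associated $w$'s are of order at most $m-1+1=m$, i.e.\ at most $w^{(m)}=w^{(k+1)}$ with $k=m-1$; this combinatorial estimate is exactly what pins down the stated index ranges. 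Everything else is the standard $C^\infty$-dependence supplied by Proposition~\ref{p1}.
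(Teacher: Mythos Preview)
Your approach is correct and is precisely the iteration the paper has in mind (the paper in fact omits the proof, remarking only that the lemma follows ``by iteration''); the Fa\`a di Bruno expansion combined with the $C^\infty$ solution map from Proposition~\ref{p1} is exactly the right mechanism. Two cosmetic slips to clean up: you have interchanged the words ``finest'' and ``coarsest'' (the single-block partition is the coarsest, the all-singletons partition is the finest), and your block-size bound should read ``each of size at most $m$'' rather than ``at most $m-1$'' (a two-block partition of $\{1,\ldots,m+1\}$ can have a block of size $m$); your conclusion that the highest-order factor is $w^{(m)}=w^{(k+1)}$ with $k=m-1$ is nonetheless correct.
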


\section{Proof of Theorem~\ref{t1}}
\label{sec_proof_thm}

In this section we will prove Theorem~\ref{t1} by admitting the proof of the following denseness result.

\begin{prop}\label{p2} Adopting all the conditions of Theorem \ref{t1}, consider $F\in C([0,T]\times\overline{\Omega})$ and $m\in\mathbb N$. Assume that the following identity 
	\begin{align}\label{p2a}
		\int_0^T\int_\Omega Fv_1\cdots v_m\s w \, dxdt=0
	\end{align}
	holds true for all $v_j\in C^{1+\frac{\alpha}{2},2+\alpha}([0,T]\times\overline{\Omega}))$ and $w\in C^{1+\frac{\alpha}{2},2+\alpha}([0,T]\times\overline{\Omega}))$ solving the following equations
	\begin{align}\label{p2b}
		\begin{cases}
			\rho(t,x) \partial_t v_j+\mathcal A(t)  v_j+ \partial_\mu b(t,x,u_0)v_j=0,  & \mbox{in}\ Q ,\\
			v_j(0,x)=0 &\mbox{for } x\in\Omega,
		\end{cases}
	\end{align}
	for $j=1,\ldots, m$, and 
	\begin{align}\label{p2c}
		\begin{cases}
			-\partial_t(\rho(t,x) w)+\mathcal A(t)  w+ \partial_\mu b(t,x,u_0)w=0,  & \mbox{in}\ Q ,
			\\
			w(T,x)=0 &\mbox{for } x\in\Omega,
		\end{cases}
	\end{align}
	respectively. Then $F\equiv 0$.
	
\end{prop}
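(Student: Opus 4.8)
The plan is to show that the products $v_1\cdots v_m\,w$ in \eqref{p2a} are rich enough to test against an arbitrary continuous $F$, and to extract from the integral identity the geodesic ray transform of $F$ along geodesics of the metrics $g(t)=\rho(t,\ccdot)a(t,\ccdot)^{-1}$. Injectivity of this transform under the simplicity Assumption \ref{assump simple} will then force $F\equiv 0$. The essential analytic input is a family of geometric optics (GO) solutions to the forward linearized equation \eqref{p2b} and to the backward adjoint equation \eqref{p2c} that concentrate, in a high-frequency limit, along a single geodesic. Since the operator $\rho\partial_t+\mathcal A(t)+\partial_\mu b(t,x,u_0)$ is parabolic rather than elliptic, these solutions cannot be produced by the usual complex-exponential ansatz; instead I would construct them via the energy-estimate scheme of \cite{Fe}, obtaining solutions of WKB / Gaussian-beam type together with a remainder controlled in $C^{1+\frac{\alpha}{2},2+\alpha}([0,T]\times\overline{\Omega})$ that is negligible in the concentration limit.

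To handle the product of $m+1$ factors I would let only two of them carry the concentration. Concretely, I would build $v_1$ and $w$ as GO solutions whose leading amplitudes concentrate onto a fixed geodesic, while the remaining $m-1$ solutions $v_2,\dots,v_m$ are chosen as solutions whose leading amplitudes converge to a fixed nonvanishing smooth function, e.g.\ approaching the constant $1$ on times bounded away from $t=0$ (the homogeneous initial condition in \eqref{p2b} forces a thin layer near $t=0$). In the limit the factors $v_2\cdots v_m$ then contribute a known nonvanishing weight, and \eqref{p2a} degenerates into an identity for the two-fold product $\int_0^T\int_\Omega \widetilde F\,v_1 w\,dx\,dt=0$, where $\widetilde F$ is $F$ times the limiting background amplitudes.

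Passing to the high-frequency limit in $\int_0^T\int_\Omega \widetilde F\,v_1 w\,dx\,dt$, the product $v_1 w$ localizes onto the chosen geodesic, and a concentration / stationary-phase computation identifies the limit with a time-integrated geodesic ray transform of $\widetilde F$ along that geodesic, the leading amplitudes of $v_1$ and $w$ being chosen to solve the associated transport equations so as to reduce to the unattenuated transform. As the geodesic and its time parameter range over all admissible choices, vanishing of the identity yields vanishing of the ray transform of $\widetilde F(t,\ccdot)$ on the simple manifold $(\overline{\Omega}_1,g(t))$ for each relevant $t$. Invoking injectivity of the geodesic ray transform on simple manifolds, together with the nonvanishing of the background amplitudes, I conclude $F\equiv 0$ first on the interior of $Q$ and then, by continuity of $F$, on all of $[0,T]\times\overline{\Omega}$.

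The main obstacle is the GO construction itself: because the operator is parabolic the symbol is not elliptic and time is distinguished, so the existence of concentrating solutions with controllable remainders is delicate and relies crucially on the energy estimates of \cite{Fe}. Moreover the metric $g(t)$ depends on $t$, so the spatial geodesic on which one concentrates varies with the time slice; one must organize the concentration so that a clean ray transform is recovered at each fixed $t$ in the limit. A secondary technical point is ensuring that the $m-1$ background factors genuinely converge to nonvanishing amplitudes despite the homogeneous initial condition in \eqref{p2b}, which I would handle by restricting the recovery to times bounded away from $t=0$ and then exhausting $Q$.
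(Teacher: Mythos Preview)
Your overall strategy---GO solutions for the linearized parabolic equation built via the energy method of \cite{Fe}, concentration on geodesics of the time-dependent metric $g(t)$, and reduction to injectivity of the geodesic ray transform on the simple manifold $(\overline{\Omega}_1,g(t))$---is exactly what the paper does. There are, however, two points where your execution diverges.

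First, the paper does not introduce ``background'' solutions $v_2,\dots,v_m$ approximating a nonvanishing function. Instead it argues by induction on $m$: the base case $m=1$ uses two GO solutions whose exponential phases $e^{\pm(\tau^2t+\tau\psi)}$ cancel exactly; for the inductive step one fixes an arbitrary $v_1$, sets $\widetilde F=Fv_1\in C([0,T]\times\overline{\Omega})$, applies the hypothesis for $m$ factors to obtain $\widetilde F\equiv 0$, and then feeds this pointwise identity back into the $m=1$ case to conclude $F\equiv 0$. This bypasses entirely the need to manufacture nonvanishing solutions of \eqref{p2b}. Your formulation ``approaching the constant $1$'' cannot work as written---any solution with zero initial data vanishes at $t=0$ and there is no parameter in which it converges to $1$---but a single strictly positive solution on $(0,T]\times\overline{\Omega}$ does exist (substitute $v=e^{Ct}\tilde v$ to make the zeroth-order coefficient nonnegative, take positive lateral data, and invoke the parabolic strong maximum principle), and with that fix your scheme goes through. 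The paper's iteration is simply cleaner and needs no such auxiliary construction.

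Second, the remainder in the GO ansatz is controlled only in $L^2(Q)$ (with an $L^2H^1$ bound), not in $C^{1+\alpha/2,2+\alpha}$; the paper first extends the identity \eqref{p2a} by density to solutions in $H^1(0,T;L^2(\Omega))\cap L^2(0,T;H^2(\Omega))$ and inserts the GO solutions there. Also, no stationary-phase computation is involved: the phases of $v_1$ and $w$ cancel identically, and the localization in $t$ is achieved by inserting an approximate identity $\chi_\delta(t)$ into the amplitudes $c_\pm$, sending first $\tau\to\infty$ and then $\delta\to 0$ to isolate the ray transform of $F(t_0,\cdot)$ at each fixed $t_0\in(0,T)$.
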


We next use the above proposition to show Theorem \ref{t1}. The proof of the proposition will be postponed to Section \ref{sec_smoothgo}.

\begin{proof}[Proof of Theorem \ref{t1}]
	We will prove this theorem in two steps. We will start by proving that the assumption
	\begin{align*}
		\mathcal N_{b_1}(f)=\mathcal N_{b_2}(f), \text{ for all }f\in \mathbb B(f_0,\epsilon), 
	\end{align*}
	implies that 
	\begin{align}\label{t1h}
		\partial_\mu^k b_1\left(t,x,u_{1,0}(t,x)\right)=\partial_\mu^k b_2\left(t,x,u_{2,0}(t,x)\right),\quad (t,x)\in [0,T]\times\overline{\Omega},
	\end{align}
	holds true for all $k\in\mathbb N$, with $u_{j,0}$ being the solution of \eqref{eq1} with $b=b_j$ and $f=f_0$, for $j=1,2$. Then, we will complete the proof by showing that \eqref{t1h} implies the claim
	\begin{align*}
		b_1=S_\varphi b_2.
	\end{align*}
	for some $\varphi\in   C^{1+\frac{\alpha}{2},2+\alpha}([0,T]\times\overline{\Omega})$ satisfying the conditions \eqref{gauge1}.
	
	\smallskip
	
	\textbf{Step 1. Determination of the Taylor coefficients} 
	
	\smallskip
	
	\noindent We will show \eqref{t1h} holds true, for all $k\in\mathbb N$, by recursion. For $j=1,2$, consider $v_{j,1}\in C^{1+\frac{\alpha}{2},2+\alpha}([0,T]\times\overline{\Omega}))$ satisfying \eqref{p2b} with $b=b_j$ and  $w\in C^{1+\frac{\alpha}{2},2+\alpha}([0,T]\times\overline{\Omega}))$ satisfying \eqref{p2b} with $b=b_1$. We assume here that
	$\left. v_{1,1}\right|_{\Sigma}=h=\left. v_{2,1}\right|_{\Sigma}$ for some $h\in\mathbb B(0,1)$. Applying the first order linearization we find 
	$$\left. \partial_{\nu} v_{1,1}\right|_{\Sigma}=\partial_s\mathcal N_{b_1}(f_0+sh)=\partial_s\mathcal N_{b_2}(f_0+sh)=\left.\partial_{\nu} v_{2,1}\right|_{\Sigma}.$$
	Thus, fixing $v_1=v_{1,1}-v_{2,1}$, we deduce that $v_1$ satisfies the following conditions
	\begin{align}\label{first line equ}
		\begin{cases}
			\rho(t,x) \partial_t v_1+\mathcal A(t)  v_1+ \partial_\mu b_1(t,x,u_0)v_1 =G(t,x)  & \mbox{in}\ Q ,
			\\
			v_1=\partial_{\nu_a}v_1=0 &\mbox{on}\ \Sigma,\\
			v(0,x)=0 &\mbox{for } x\in\Omega,
		\end{cases}
	\end{align}
	where 
	$$G(t,x)=\left(\partial_\mu b_2(t,x,u_{2,0}(t,x))-\partial_\mu b_1(t,x,u_{1,0}(t,x))\right)v_{2,1}(t,x),\quad (t,x)\in Q.$$
	Multiplying the equation \eqref{first line equ} by $w$ and integrating by parts, we obtain the identity 
	\begin{align*}
		\begin{split}
			&\int_0^T\int_{\Omega}\left(\partial_\mu b_2(t,x,u_{2,0})-\partial_\mu b_1(t,x,u_{1,0})\right)v_{2,1}w \, dx dt \\
			&= \int_{0}^{T}\int_{\Omega} \left( \rho(t,x) \partial_t v_1+\mathcal A(t)  v_1+ \partial_\mu b_1(t,x,u_0)v_1\right) w\, dxdt  \\
			&=\underbrace{\int_{0}^{T}\int_{\Omega} \left( -\partial_t (\rho w)  +\mathcal{A}(t)w+ \partial_\mu b_1(t,x,u_0)w \right) v_1 \, dxdt}_{\text{Since } 
				v_1=\partial_{\nu(a)} v_1 =0 \text{ on }\Sigma, \ v_1(0,x)=0 \text{ and } w(T,x)=0 \text{ in } \Omega.} \\
			&= 0.
		\end{split}
	\end{align*}
	Using the the fact that  $v_{2,1}$ can be seen as an arbitrary chosen element of $C^{1+\frac{\alpha}{2},2+\alpha}([0,T]\times\overline{\Omega}))$ satisfying \eqref{p2b} and applying Proposition \ref{p2}, we deduce that \eqref{t1h} holds true for $k=1$. Moreover, by the unique solvability of \eqref{eq5} 
	we deduce that $v_{1,1}=v_{2,1}$ in $Q$. 
	
	Now, let us fix $m\in\mathbb N$ and assume that, for $k=1,\ldots,m$,  \eqref{t1h} holds true and
	\begin{equation}\label{eq:sols_agree}
		w_1^{(k)}=w_2^{(k)} \text{ in } Q.
	\end{equation} 
	Consider $v_j\in C^{1+\frac{\alpha}{2},2+\alpha}([0,T]\times\overline{\Omega}))$ satisfying \eqref{p2b} with $b=b_1$ for $j=1,\ldots, m+1$, and  $w\in C^{1+\frac{\alpha}{2},2+\alpha}([0,T]\times\overline{\Omega}))$ satisfying \eqref{p2b} with $b=b_1$. We fix $h_j=\left. v_j\right|_{\Sigma}$, $j=1,\ldots,m+1$, and proceeding to the higher order linearization described in Lemma \ref{l5}, we obtain
	$$
	\left.\partial_{\nu(a)} w^{(m+1)}_j\right|_{\Sigma}=\left.\partial_{s_{1}}\ldots\partial_{s_{m+1}}\mathcal N_{b_j}(f_0+s_1h_1+\ldots+s_{m+1}h_{m+1})\right|_{s=0},$$
	with $s=(s_1,\ldots,s_{m+1})$ and $w^{(m+1)}_j$ solving \eqref{eq7} 
	as $b=b_j$, for $j=1,2$. Then the condition \eqref{t1c} implies 
	$$\left.\partial_{\nu(a)} w^{(m+1)}_1\right|_{\Sigma}=\left.\partial_{\nu(a)} w^{(m+1)}_2\right|_{\Sigma}.$$
	Fixing $w^{(m+1)}=w^{(m+1)}_1-w^{(m+1)}_2$ and applying Lemma \ref{l5}, we deduce that $w^{(m+1)}$ satisfies the following condition
	\begin{align}\label{equ K}
		\begin{cases}
			\rho(t,x) \partial_t w^{(m+1)}+\mathcal A(t)  w^{(m+1)}+ \partial_\mu b_1(t,x,u_0) w^{(m+1)}=\mathcal K  & \mbox{in}\ Q,
			\\
			w^{(m+1)}=\partial_{\nu_a} w^{(m+1)}=0 &\mbox{on}\ \Sigma,\\
			w^{(m+1)}(0,x)=0 &\mbox{for } x\in\Omega,
		\end{cases}
	\end{align}
	where $\mathcal K=\left(\partial_\mu^{m+1} b_2(t,x,u_{2,0})-\partial_\mu^{m+1} b_1(t,x,u_{1,0})\right)v_1\cdots v_{m+1}$. Here we used the assumption for this recursion argument that \eqref{t1h} and \eqref{eq:sols_agree} hold true for $k=1,\ldots,m$. Multiplying the equation \eqref{equ K} by $w$ and integrating by parts, we obtain
	$$\int_0^T\int_\Omega \left(\partial_\mu^{m+1} b_2(t,x,u_{2,0})-\partial_\mu^{m+1} b_1(t,x,u_{1,0})\right)v_1\cdots v_{m+1}\s w\, dxdt=0.$$
	Applying again Proposition \ref{p2}, we find that \eqref{t1h} holds true for $k=1,\ldots,m+1$. By unique solvability of \eqref{eq5}, we also have $w_1^{(m+1)}=w_2^{(m+1)}$ in $Q$. It follows that \eqref{t1h} holds true for all $k\in\mathbb N$.
	
	\smallskip
	
	\textbf{Step 2. Gauge invariance.} 
	
	\smallskip
	
	\noindent In this step we will show that \eqref{t1d} holds with some $\varphi\in   C^{1+\frac{\alpha}{2},2+\alpha}([0,T]\times\overline{\Omega})$ satisfying \eqref{gauge1}. We will choose here $\varphi=u_{2,0}-u_{1,0}$ and, thanks to \eqref{t1c}, we know that $\varphi$ fulfills condition \eqref{gauge1}. We fix $(t,x)\in[0,T]\times\overline{\Omega}$ and  consider the map
	$$G_j:\R\ni\mu\mapsto b_j(t,x,u_{j,0}(t,x)+\mu)-b_j(t,x,u_{j,0}(t,x)),\quad j=1,2.$$
	For $j=1,2$, using the fact that $b_j\in\mathbb A(\R;C^{\frac{\alpha}{2},\alpha}([0,T]\times\overline{\Omega}))$, we deduce that the map  $G=G_1-G_2$ is analytic with respect to $\mu\in\R$. It is clear that 
	$$
	G(0)=G_1(0)-G_2(0)=0-0=0.
	$$
	Moreover, \eqref{t1h} implies that
	$$G^{(k)}(0)=\partial_\mu^kb_1(t,x,u_{1,0}(t,x))-\partial_\mu^kb_2(t,x,u_{2,0}(t,x))=0,\quad k\in\mathbb N.$$
	Combining this with the fact that $G$ is analytic with respect to $\mu\in\R$, we deduce that there must exist $\delta>0$ such that
	$$G(\mu)=0,\quad \mu\in(-\delta,\delta).$$
	Then, the unique continuation of analytic functions implies that $G\equiv0$. It follows that, for all $\mu\in\R$, we have
	$$b_1(t,x,u_{1,0}(t,x)+\mu)-b_1(t,x,u_{1,0}(t,x))=b_2(t,x,u_{2,0}(t,x)+\mu)-b_2(t,x,u_{2,0}(t,x)).$$
	Recalling that 
	$$-b_j(t,x,u_{j,0}(t,x))=\rho(t,x) \partial_t u_{j,0}(t,x)+\mathcal A(t)u_{j,0}(t,x),\quad j=1,2,$$
	we deduce that
	\begin{align}\label{t1i}
		\begin{split}
			&b_1(t,x,u_{1,0}(t,x)+\mu)\\
			=&b_2(t,x,u_{2,0}(t,x)+\mu)+b_1(t,x,u_{1,0}(t,x))-b_2(t,x,u_{2,0}(t,x))\\
			=&b_2(t,x,u_{2,0}(t,x)+\mu)+\rho(t,x) \partial_t(u_{2,0}-u_{1,0})(t,x)+\mathcal A(t)(u_{2,0}-u_{1,0})(t,x)\\
			=&b_2(t,x,u_{2,0}(t,x)+\mu)+\rho(t,x) \partial_t \varphi(t,x)+\mathcal A(t)\varphi(t,x),\quad \mu\in\R.
		\end{split}
	\end{align}
	Considering \eqref{t1i} with $\mu_1=u_{1,0}(t,x)+\mu$, we obtain
	$$\begin{aligned}b_1(t,x,\mu_1)&=b_2(t,x,u_{2,0}(t,x)-u_{1,0}(t,x)+\mu_1)+\rho(t,x) \partial_t \varphi(t,x)+\mathcal A(t)\varphi(t,x)\\
		&=b_2(t,x,\varphi(t,x)+\mu_1)+\rho(t,x) \partial_t \varphi(t,x)+\mathcal A(t)\varphi(t,x),\quad \mu_1\in\R.\end{aligned}$$ 
	Using the fact that here $(t,x)\in[0,T]\times\overline{\Omega}$ is arbitrary chosen we deduce that \eqref{t1d} holds true with  $\varphi=u_{2,0}-u_{1,0}$. This completes the proof of the theorem.
\end{proof}


\section{Proof of Proposition \ref{p2}}
\label{sec_smoothgo}

In order to prove Proposition \ref{p2}, we need to construct special solutions, which helps us to prove the completeness of products of solutions.

\subsection{Constructions of geometric optics solutions}

For the proof of Proposition~\ref{p2} will need to consider the construction of \emph{geometrical optics} (GO in short) solutions. More precisely, fixing $q\in L^\infty(Q)$ we will consider GO solutions to the equation
\begin{align}\label{eqGO1sm}
	\begin{cases}
		\rho(t,x)\partial_tv+\mathcal A(t) v+qv =0  & \mbox{in}\ Q ,
		\\
		v(0,x)=0 &\mbox{for } x\in\Omega,
	\end{cases}
\end{align}
as well as GO solutions for the formal adjoint equation 
\begin{align}\label{eqGO2sm}
	\begin{cases}
		-\rho(t,x)\partial_tw+\mathcal A(t) w-\partial_t\rho(t,x)w+qw=0  & \mbox{in}\ Q ,
		\\
		w(T,x)=0 &\mbox{for } x\in\Omega,
	\end{cases}
\end{align}
belonging to the  space  $H^1(0,T;L^2(\Omega))\cap L^2(0,T;H^2(\Omega))$.
Following the recent construction of \cite{Fe}, in this section we give a  construction of these GO solutions that will depend on a large asymptotic positive parameter $\tau$ with $\tau\gg1$ and  concentrate on  geodesics with respect to the metric $g(t)=\rho(t,\ccdot)a(t,\ccdot)^{-1}$ in $\overline{\Omega_1}$ that passes through a point $x_0\in\partial\Omega_1$, whenever Assumption \ref{assump simple} holds. Here $\Omega_1$ is a domain satisfying Assumption \ref{assump simple} containing  $\overline{\Omega}$. Let us construct the GO solution as follows.

First, we  consider solutions of the form
\begin{align}\label{GO7}
	v(t,x)=e^{\tau^2t+\tau \psi(t,x)}\left[c_{+}(t,x)+R_{+,\tau}(t,x)\right],\quad (t,x)\in Q,
\end{align}
and 
\begin{align}\label{GO8}
	w(t,x)=e^{-\tau^2t-\tau \psi(t,x)}\left[c_{-}(t,x)+R_{-,\tau}(t,x)\right],\quad (t,x)\in Q,
\end{align}
to equations \eqref{eqGO1sm} and \eqref{eqGO2sm} respectively. The phase functions and principal terms $c_\pm$ of the GOs will be constructed  by using polar normal coordinate on the manifold $\left(\overline{\Omega_1}, g(t)\right)$, for $t\in[0,T]$.

Let us define the differential operators $L_\pm$, $P_{\tau,\pm}$ on $\Omega_1$  by
\begin{align}\label{l}
	\begin{split}
		L_{+}=&\rho(t,x)\partial_t+\mathcal A(t)+q(t,x),\\
		L_{-}=&-\rho(t,x)\partial_t+\mathcal A(t) -\partial_t\rho(t,x)+q(t,x),
	\end{split}
\end{align}
and 
\begin{align}\label{P_pm}
	P_{\tau,\pm}= e^{\mp (\tau^2t+\tau \psi(t,x))}L_{\pm}\left( e^{\pm (\tau^2t+\tau \psi(t,x))} \right).
\end{align}
Via a straightforward computation, we can write 
$$P_{\tau,\pm}v=\tau^2\, \mathcal I v+ \tau\, \mathcal J_{\pm}v + L_{\pm}v,$$
where  
\begin{align}\label{I}
	\mathcal I=\rho(t,x)-\sum_{i,k=1}^na_{ik}(t,x)\partial_{x_i}\psi\partial_{x_k}\psi,
\end{align}
and 
\begin{align}\label{J}
	\mathcal J_\pm v:=\mp2\sum_{i,k=1}^na_{ik}(t,x)\partial_{x_i}\psi\partial_{x_k} v+\left[\rho(t,x)\partial_t \psi\pm\mathcal A(t)\psi\right]v.
\end{align}

Next we want to choose $\psi$ in such a way that the eikonal equation $\mathcal I=0$ is satisfied in $Q$. Hence, after choosing $\psi$, we seek for $c_{\pm}$ solving the transport equations
\begin{align}\label{trans1}
	\mathcal J_+c_{+}=0\quad \text{ and } \quad  \mathcal J_-c_{-}=0.
\end{align}
Since for all $t\in[0,T]$  the Riemannian manifold $\left(\overline{\Omega_1}, g(t)\right)$ is assumed to be simple, the eikonal equation $\mathcal I=0$ can be solved globally on $\overline{Q}$. This is known, but let us show how it is done.  
For this, let us fix $x_0\in\partial\Omega_1$ and consider the polar normal coordinates $(r,\theta)$ on $\left(\overline{\Omega_1}, g(t)\right)$ given by $x=\exp_{x_0}(r\s \theta)$, where $r>0$ and 
$$
\theta\in S_{x_0,t}(\overline{\Omega_1}):=\left\{v\in \R^n:\, |v|_{g(t)[x_0]}=1\right\}.
$$
According to the Gauss lemma, in these coordinates the metric takes the form 
$$
dr^2+g_0(t,r,\theta),
$$ 
where $g_0(t,r,\theta)$ is a metric defined on $S_{x_0,t}(\overline{\Omega_1})$, which depends smoothly on $t$ and $r$. 
In fact, we choose 
\begin{align}
	\label{psi1}
	\psi(t,x)=\textrm{dist}_{g(t)}(x_0,x), \quad (t,x) \in [0,T]\times\overline{\Omega},
\end{align}
where $\text{dist}_{g(t)}$ is the Riemannian distance function associated with the metric $g(t)$, $t\in[0,T]$. 
As $\psi$ is given by $r$ in the polar normal coordinates,  one can easily check that $\psi$ solves $\mathcal I=0$. 

\
Let us now turn to the transport equations \eqref{trans1}. 
We write $c_{\pm}(t,r,\theta)=c_{\pm}(t,\exp_{x_0}(r\s \theta))$
and use this notation to indicate the representation in the polar normal coordinates also for other functions. 
Then, using this notation and following \cite[Section 5.1.2]{Fe}, we deduce that, in these polar normal coordinates with respect to $x_0\in\partial \Omega_1$, the equations in \eqref{trans1} become 
\begin{equation}\label{eq:transport_polar}
	\pd_rc_{\pm}+\left({\pd_r\beta \over 4\beta}\right)c_{\pm}\mp \frac{\partial_t\psi(t,r,\theta)}{2}c_{\pm}=0
\end{equation}
with $\beta(t,r,\theta)=\det \left(g_0(t,r,\theta)\right)$. Note that in this equation  there is  no differentiation in the $\theta$-variable. This fact will allow use to localized GO solutions near geodesics. We fix
$$r_0=\inf_{t\in[0,T]}\textrm{dist}_{g(t)}(\partial\Omega_1,\overline{\Omega})$$
and recall that $r_0>0$.
For any $h\in C^\infty( S_{x_0,t}(\overline{\Omega}_1))$ and $\chi\in C^\infty_0(0,T)$, 
the functions 
\begin{align}\label{b12}
	&c_{+}(t,r,\theta)=\chi(t)h(\theta)\beta(t,r,\theta)^{-1/4}\exp\left(  \int_{r_0}^{r}\frac{\partial_t\psi(t,s,\theta)}{2} \, ds\right),\\
	&c_{-}(t,r,\theta)=\chi(t)\beta(t,r,\theta)^{-1/4}\exp\left( - \int_{r_0}^{r}\frac{\partial_t\psi(t,s,\theta)}{2}\, ds\right),	
\end{align}
are respectively solutions of the transport equations \eqref{trans1}. Moreover, the regularity of the coefficients $\rho$, $a$ and the simplicity of the manifold $\left(\overline{\Omega_1}, g(t)\right)$ implies that solutions of the transport equation \eqref{eq:transport_polar} $c_\pm\in C^\infty([0,T]\times\overline{\Omega})$.

In order to complete our construction of GO solutions, we need to show that it is possible to construct the remainder terms 
$$R_{\pm,\tau}\in H^1(0,T; L^2(\Omega))\cap L^2(0,T;H^2(\Omega))$$ 
satisfying the decay property
\begin{align}\label{GO17}
	\left\| R_{\pm,\tau}\right\|_{L^2(Q)}\leq C\,|\tau|^{-1},
\end{align}
for some constant $C>0$ independent of $\tau$, positive and large enough, as well as the initial and final condition
$$
R_{+,\tau}(0,x)=R_{-,\tau}(T,x)=0,\quad x\in\Omega.$$
For this purpose, we recall that for $\psi$ given by \eqref{psi1} we have $P_{\tau,\pm}=L_{\pm}+\tau \mathcal J_\pm$ with $L_\pm$ and $\mathcal J_\pm$ defined by \eqref{l}--\eqref{J}. Then,  according to \eqref{trans1}, we have 
$$\begin{aligned}L_\pm\left[e^{\tau^2t+\tau \psi(t,x)}c_{\pm}(t,x)\right]
	=&e^{\tau^2t+\tau \psi(t,x)}P_{\tau,\pm}c_{\pm}(t,x)\\
	=&e^{\tau^2t+\tau \psi(t,x)}L_{\pm}c_{\pm}.\end{aligned}$$
Therefore, the conditions $L_{+}v=0$ and $L_-w=0$ are  fulfilled if and only if $R_{\pm,\tau}$
solves
$$P_{\tau,\pm}R_{\pm,\tau}(t,x)=-L_\pm c_{\pm}(t,x),\quad (t,x)\in(0,T)\times\Omega.$$
We will choose $R_{\pm,\tau}$ to be the solution of the following IBVP
\begin{align}\label{R+}
	\begin{cases}
		P_{\tau,+}R_{+,\tau}(t,x)  =  -L_+c_{+}(t,x) & \text{ in } 
		Q,\\
		R_{+,\tau}(t,x)  =  0  & \text{ on } \Sigma, \\  
		R_{+,\tau}(0,x)  =  0 &  \text{ for } x \in \Omega,
	\end{cases}
\end{align}
and
\begin{align}\label{R-}
	\begin{cases}
		P_{\tau,-}R_{-,\tau}(t,x)  =  -L_-c_{-}(t,x) &  \text{ in } 
		Q,\\
		R_{-,\tau}(t,x)  =  0 &   \text{ on } \Sigma, \\  
		R_{-,\tau}(T,x)  =  0 & \text{ for } x \in \Omega.
	\end{cases}
\end{align}
Note that $L_\pm c_\pm$ is independent of $\tau$. We give the following extension of the energy estimate approach under consideration \cite{Fe} for problem \eqref{R+}-\eqref{R-}.
\begin{prop}\label{p3} There exists $\tau_0>0$, depending only on $\Omega$, $T$, $a$, $\rho$, $q$, such that problem \eqref{R+}--\eqref{R-} admits a unique solution $R_{\pm,\tau}\in H^1(0,T;L^2(\Omega))\cap L^2(0,T;H^2(\Omega))$ satisfying the estimate 
	\begin{align}\label{p3b}
		\tau \left\| R_{\pm,\tau}\right\|_{L^2(Q)}+\tau^{\frac{1}{2}}\left\| R_{\pm,\tau}\right\|_{L^2(0,T;H^1(\Omega))}\leq C\left\| L_\pm c_{\pm}\right\|_{L^2(Q)},\quad \tau>\tau_0.
	\end{align}
	
\end{prop}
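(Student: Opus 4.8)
The plan is to deduce Proposition \ref{p3} from a uniform-in-$\tau$ \emph{a priori} estimate of the form \eqref{p3b} for smooth functions $R$ that satisfy the lateral condition $R|_\Sigma=0$ together with the relevant initial (for $+$) or final (for $-$) condition, and then to promote that estimate to an existence-and-uniqueness statement by a functional-analytic argument. The starting observation is that, since the phase $\psi$ in \eqref{psi1} solves the eikonal equation $\mathcal I=0$ identically on $\overline{Q}$, the $\tau^2$-terms cancel and the conjugated operator reduces to $P_{\tau,\pm}=L_\pm+\tau\mathcal J_\pm$, so the large parameter enters only through the first-order transport operator $\mathcal J_\pm$ of \eqref{J}. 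I would treat the $+$ case in detail; the $-$ case then follows by the time reversal $t\mapsto T-t$, which interchanges the initial and final conditions and flips the sign of $\rho\partial_t$.

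For the a priori estimate I would run an energy argument in the spirit of \cite{Fe}. Expanding $\|P_{\tau,+}R\|_{L^2(Q)}^2$ (equivalently, splitting $P_{\tau,+}$ into its $L^2(Q)$-symmetric part $P_s=\mathcal A(t)+(\text{multiplication})$ and its skew-symmetric part $P_a$, which carries the order-$\tau$ transport field, and integrating by parts) produces three types of contribution. The boundary terms at $t=0$ and on $\Sigma$ are disposed of using $R(0,\ccdot)=0$ and $R|_\Sigma=0$, the terminal slice at $t=T$ giving a favourable sign through the factor $\rho$. The ellipticity \eqref{ell} supplies the coercive term $c\|\nabla R\|_{L^2(Q)}^2$. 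The decisive $\tau$-dependence should come from the squared transport term $\approx 4\tau^2\big\|\sum_{i,k}a_{ik}\partial_{x_i}\psi\,\partial_{x_k}R\big\|_{L^2(Q)}^2$: because $\sum a_{ik}\partial_{x_i}\psi\,\partial_{x_k}R$ is, up to the factor $\rho$, the derivative of $R$ along the geodesics of $g(t)$ and $R$ vanishes on the part of $\Sigma$ where these geodesics enter $\Omega$, a Poincaré-type inequality along the flow (valid by Assumption \ref{assump simple}) converts this into control of $\tau^2\|R\|_{L^2(Q)}^2$. Absorbing the cross and lower-order terms by Young's inequality for $\tau>\tau_0$ should then give \eqref{p3b}, and in particular the decay \eqref{GO17} upon taking $F=-L_\pm c_\pm$ fixed and independent of $\tau$.

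Granting the a priori estimate, uniqueness is immediate: with $-L_+c_+$ replaced by $0$, \eqref{p3b} forces $R\equiv0$. For existence I would establish the same estimate for the formal adjoint (transposed) problem — which has the identical structure with initial and final conditions swapped — and then invoke it via the Hahn–Banach theorem together with the Riesz representation theorem to produce a weak solution $R\in L^2(Q)$ obeying \eqref{p3b}. Since for each fixed $\tau$ the operator $P_{\tau,\pm}$ is a genuinely uniformly parabolic second-order operator with smooth coefficients, standard parabolic regularity then upgrades $R$ to the class $H^1(0,T;L^2(\Omega))\cap L^2(0,T;H^2(\Omega))$ required in the statement.

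The hard part will be the a priori estimate itself, and within it the treatment of the lower-order terms carrying the large parameter. The zeroth-order piece of $\tau\mathcal J_+$ has coefficient $\rho\partial_t\psi+\mathcal A(t)\psi$, which is sign-indefinite; a naive test against $R$ would leave a term of size $\tau\|R\|_{L^2}^2$ of uncontrolled sign that the $O(1)$ coercive elliptic term cannot absorb, and compensating it with a time-exponential weight would force a weight of size comparable to $\tau$ and thereby destroy the clean polynomial bound. Arranging that the positive squared-transport term genuinely dominates this indefinite contribution — so that exactly the powers $\tau$ and $\tau^{1/2}$ of \eqref{p3b} are recovered with no exponential loss — is the delicate point, and it is precisely here that the parabolic energy mechanism of \cite{Fe}, exploiting both the evolution structure and the vanishing of $R$ on the inflow boundary, is needed.
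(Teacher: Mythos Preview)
Your route differs from the paper's in the key mechanism for the a priori estimate. You propose to square, i.e.\ to expand $\|P_{\tau,+}R\|_{L^2(Q)}^2$, and to extract $\tau^2\|R\|_{L^2(Q)}^2$ from the squared transport term via a Poincar\'e inequality along the geodesic flow of $g(t)$ (invoking simplicity). The paper does \emph{not} square; it tests the equation $P_{\tau,+}v=K$ against $ve^{\lambda\psi}$ for a fixed auxiliary constant $\lambda>0$ and integrates by parts. The point is that the first-order transport piece $-2\tau\sum a_{ik}\partial_{x_i}\psi\,\partial_{x_k}v$, tested against $ve^{\lambda\psi}$, produces after one integration by parts exactly $\tau\lambda\int_Q\rho\,v^2e^{\lambda\psi}\,dxdt$, thanks to the eikonal identity $\sum a_{ik}\partial_{x_i}\psi\partial_{x_k}\psi=\rho$. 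This single positive term, with $\lambda$ chosen large but \emph{independent of $\tau$}, already dominates all the indefinite $O(\tau)$ and $O(\lambda^2)$ lower-order contributions you were worried about, while the elliptic part delivers $c\int_Q|\nabla v|^2e^{\lambda\psi}$ with a $\tau$-free constant; Cauchy--Schwarz then gives \eqref{p3b}. This \emph{is} the energy mechanism of \cite{Fe} you allude to, but it is a weighted-multiplier argument, not a Carleman-style commutator expansion, and it uses only that $\psi$ solves the eikonal equation --- no Poincar\'e inequality along geodesics, hence no simplicity (the paper remarks this explicitly after the proof). Your squaring approach may be viable, but you have not supplied the cross-term analysis $2\tau(L_+R,\mathcal J_+R)$, and it would in any case rest on a geometric hypothesis the paper's argument does not need.

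On existence you overshoot: since for each fixed $\tau$ the problem \eqref{R+} is a standard linear parabolic IBVP with smooth coefficients and homogeneous lateral/initial data, classical theory already yields a unique solution in $H^1(0,T;L^2(\Omega))\cap L^2(0,T;H^2(\Omega))$; the Hahn--Banach/Riesz machinery is unnecessary, and the paper simply cites this and focuses entirely on the estimate.
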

\begin{proof}
	The proof of this proposition is based on arguments similar to \cite[Proposition 4.1]{Fe} that we adapt to problem \eqref{R+}--\eqref{R-} whose equations are more general than the ones under consideration in \cite{Fe}. For this reason and for sake of completeness we give the full proof of this proposition. We only show the result for $R_{+,\tau}$, the same property for $R_{-,\tau}$ can be deduced by applying similar arguments. Let us first observe that from the classical theory of existence of solutions for linear PDEs one can check that \eqref{R+} admits a unique solution $R_{+,\tau}\in H^1(0,T;L^2(\Omega))\cap L^2(0,T;H^2(\Omega))$ and we only need to check estimate \eqref{p3b}.

	In all this proof, we set 
	$$
	v=R_{+,\tau}, \quad  K=-L_+c_{+},
	$$
	and without loss of generality we assume that both $v$ and $K$ are real valued.
	We fix $\lambda>0$ and we multiply \eqref{R+} by $ve^{\lambda \psi}$ in order to get
	\begin{align*}
		\int_Q \left(-L_+c_{+} \right) e^{\lambda \psi} \, dxdt =&\int_Q Kve^{\lambda \psi}\, dxdt \\
		=&\int_Q \left(\rho\partial_tv+\mathcal A(t)v+\tau\, \mathcal J_{+}v+qv\right)ve^{\lambda \psi}\, dxdt\\
		:=&I+II+III,
	\end{align*}
	where
	\begin{align*}
		I&=\int_Q \left(\rho\partial_tv+qv+\tau \rho(t,x)\partial_t \psi v\right)ve^{\lambda \psi}\, dxdt, \\
		II&=\int_Q (\mathcal A(t)v)ve^{\lambda \psi}\, dxdt, \\
		III&=\tau \int_Q\left(-2 \sum_{i,k=1}^na_{ij}(t,x)\partial_{x_i}\psi\partial_{x_k} v+\mathcal A(t)\psi v\right)ve^{\lambda \psi}\, dxdt.
	\end{align*}

	For $I$, using the fact that $v_{|t=0}=0$ and integrating by parts, we get
	\begin{align*}
		I=&\frac{1}{2}\int_\Omega \rho(T,x)v(T,x)^2e^{\lambda \psi(T,x)}\, dx-\frac{1}{2}\int_Q\partial_t\left(\rho e^{\lambda \psi}\right)v^2\, dxdt \\
		&+\int_Q \left(qv+\tau \rho(t,x)\partial_t \psi v\right)ve^{\lambda \psi}\, dxdt\\
		\geq& -\frac{1}{2}\int_Q\partial_t\left(\rho e^{\lambda \psi}\right)v^2\, dxdt+\int_Q \left(qv+\tau \rho(t,x)\partial_t \psi v\right)ve^{\lambda \psi}\, dxdt\\
		\geq &-\left(C_1+C_2\tau+C_3\lambda\right) \int_Qv^2e^{\lambda \psi}\, dxdt,
	\end{align*}
	with $C_1,C_2, C_3$ three positive constants independent of $\lambda$ and $\tau$.

	For $II$, using the fact that $v|_{\Sigma}=0$, applying \eqref{ell} and integrating by parts, we find
	\begin{align*}
		II&=\int_Q \left(\sum_{i,k=1}^na_{ik}(t,x)\partial_{x_i}v\partial_{x_k} v\right)e^{\lambda \psi}\, dxdt+\frac{1}{2}\int_Q \sum_{i,k=1}^na_{ik}(t,x)\partial_{x_i}(v^2)\partial_{x_k} \left(e^{\lambda \psi}\right)dxdt\\
		&\geq c\int_Q |\nabla_x v|^2e^{\lambda \psi}\, dxdt-\frac{1}{2}\int_Q v^2\mathcal A(t)\left(e^{\lambda \psi}\right)dxdt\\
		&\geq c\int_Q |\nabla_x v|^2e^{\lambda \psi}\, dxdt-\left(C_4\lambda+C_5\lambda^2\right) \int_Qv^2e^{\lambda \psi}\, dxdt,
	\end{align*}
	with constants $c, C_4,C_5>0$  independent of $\lambda$ and $\tau$.

	Finally, for $III$, using the that $\mathcal I=0$, with $\mathcal I$ defined by \eqref{I}, we find
	\begin{align*}
		III=&-\tau \int_Q \sum_{i,k=1}^na_{ik}(t,x)\partial_{x_i}\psi\partial_{x_k} (v^2)e^{\lambda \psi}\, dxdt+\tau \int_Q(\mathcal A(t)\psi) v^2e^{\lambda \psi}\, dxdt\\
		=& \tau \int_Q \sum_{i,k=1}^na_{ik}(t,x)\partial_{x_i}\psi\partial_{x_k}\left(e^{\lambda \psi}\right) v^2\, dxdt\\
		=& \tau\lambda  \int_Q \left(\sum_{i,k=1}^na_{ik}(t,x)\partial_{x_i}\psi\partial_{x_k}\psi\right) v^2e^{\lambda \psi}\, dxdt\\
		=& \tau\lambda  \int_Q \rho(t,x) v^2e^{\lambda \psi}\, dxdt\\
		\geq &\left(\inf_{(t,x)\in Q}\rho(t,x)\right)\tau\lambda\int_Q  v^2e^{\lambda \psi}\, dxdt\\
		:=&C_6\tau\lambda\int_Q  v^2e^{\lambda \psi}\, dxdt.
	\end{align*}
	Combining these estimates of $I$, $II$ and $III$, we find
	\begin{align*}
		\int_Q Kve^{\lambda \psi}\, dxdt\geq &c\int_Q |\nabla_x v|^2e^{\lambda \psi}\, dxdt \\
		&+\left(-C_1-C_2\tau-C_3\lambda-C_4\lambda-C_5\lambda^2+C_6\tau\lambda\right)\int_Q  v^2e^{\lambda \psi}\,dxdt.
	\end{align*}
	Choosing $\lambda=\frac{3C_2}{C_6}$ and 
	$$\tau_0=\frac{3\left(\frac{C_1}{\lambda}+C_3+C_5\lambda\right)}{C_6},$$
	we deduce that
	$$\int_Q Kve^{\lambda \psi}\, dxdt\geq c\int_Q |\nabla_x v|^2e^{\lambda \psi}\, dxdt+\frac{C_6}{3}\tau\lambda\int_Q  v^2e^{\lambda \psi}\, dxdt,\quad \tau>\tau_0.$$
	Applying Cauchy-Schwarz inequality, for $\tau>\tau_0$, we get
	$$ c\int_Q |\nabla_x v|^2e^{\lambda \psi}\, dxdt+\frac{C_6}{3}\tau\lambda\int_Q  v^2e^{\lambda \psi}\, dxdt\leq \left(\tau^{-1}\int_Q K^2e^{\lambda \psi}\, dxdt\right)^{\frac{1}{2}}\left(\tau\int_Q v^2e^{\lambda \psi}\, dxdt\right)^{\frac{1}{2}},$$
	which implies that
	$$\tau^{\frac{1}{2}}\int_Q |\nabla_x v|^2\, dxdt+\tau\int_Q  v^2\, dxdt\leq C\int_Q K^2\, dxdt,\quad \tau>\tau_0,$$
	where $C>0$ is a constant independent of $\tau$. From this last estimate, we deduce \eqref{p3b}.
\end{proof}

Note that the the energy estimate  \eqref{p3b} is only subjected to the requirement that $\psi$ solves the eikonal equation $\mathcal I=0$ in $Q$. For this result the simplicity assumption is not required.

Applying Proposition \ref{p3}, we deduce the existence of $R_{\pm,\tau}$ fulfilling  condition \eqref{R+}--\eqref{R-} and the decay estimate \eqref{GO17}.  Armed with these class of GO solutions we are now in position to complete the proof of Proposition \ref{p2}.

\subsection{Completion of the proof of Proposition \ref{p2}}

We will show Proposition \ref{p2} by iteration. 
\begin{proof}[Proof of Proposition \ref{p2}]
	We start by showing that \eqref{p2a} holds true for $m=1$. Note that by density, we can extend  \eqref{p2a} to $v_1\in H^1(0,T; L^2(\Omega))\cap L^2(0,T;H^2(\Omega))$ and $w\in H^1(0,T; L^2(\Omega))\cap L^2(0,T;H^2(\Omega))$ satisfying \eqref{p2b}--\eqref{p2c}. We fix $x_0\in\partial\Omega_1$, $t_0\in(0,T)$ and for $\chi_*\in C^\infty_0((-1,1)$ satisfying 
	$$\int_\R \chi_*(t)^2\, dt=1,$$
	and we set 
	\begin{align*}
		\chi_\delta (t) =\delta^{-\frac{1}{2}}\chi_*\left(\delta^{-1}(t-t_0)\right), \quad  \text{ for }\delta\in \left(0,\min(T-t_0,t_0)\right).
	\end{align*} 
	We consider $v_1$ (resp. $w$) of the form  \eqref{GO7} (resp. \eqref{GO8}) satisfying \eqref{eqGO1sm} (resp. \eqref{eqGO2sm}) with $q(t,x)=\partial_\mu b_1(t,x,u_{1,0}(t,x))$ (resp. $q(t,x)=\partial_\mu b_2(t,x,u_{2,0}(t,x))$)  and $R_{+,\tau}$ (resp. $R_{-,\tau}$) satisfying the decay property \eqref{GO17}. Here we choose $\chi=\chi_\delta$ in the expression of the function $v_1$ and $w$.
	Then, condition \eqref{p2a} implies that 
	\begin{align}\label{p2d}
		\lim_{\tau\to+\infty}\int_0^T\int_\Omega F(t,x)v_1w \, dx dt=0=\int_0^T\int_\Omega F(t,x)c_{+,0}c_{-,0}\, dx dt.
	\end{align}
	From now on, for  $t\in[0,T]$, we denote by 
	$\pd _+S_t(\overline{\Omega_1})$ the unit sphere bundle 
	$$
	\pd _+S_t(\overline{\Omega_1}):=\left\{(x,\theta)\in S_t(\overline{\Omega_1}):\, x\in\pd \Omega_1,\ \left\langle \theta,\nu_t(x)\right\rangle_{g(t)}<0\right\},$$
	where $\nu_t$ denotes the outward unit normal vector of $\partial\Omega_1$ with respect to the metric $g(t)$. We also denote for any $(y,\theta)\in \pd _+S_t(\overline{\Omega_1})$ by $\ell_{t,+}(y,\theta)$ the time of existence in $\overline{\Omega_1}$ of the maximal geodesic $\gamma_{y,\theta}$, with respect to the metric $g(t)$, satisfying $\gamma_{y,\theta}(0)=y$ and $\gamma_{y,\theta}'(0)=\theta$.

	Consider $\tilde{F}\in L^\infty((0,T)\times\Omega_1)$ defined by 
	\begin{align*}
		\tilde{F}(t,x)=\begin{cases}
			(\det(g(t))^{-\frac{1}{2}}F(t,x), &\text{ for }(t,x)\in Q\\
			0,  & \text{ for }(t,x)\in (0,T)\times (\Omega_1\setminus\Omega)
		\end{cases},
	\end{align*}
	then we have
	$$\int_0^T\int_{\overline{\Omega_1}} \tilde{F}(t,x)c_{+,0}c_{-,0}\, dV_{g(t)}(x)  dt=\int_0^T\int_{\Omega_1} \tilde{F}(t,x)c_{+,0}c_{-,0}\sqrt{\det(g(t))}\, dx dt=0,$$
	where $dV_{g(t)}$ is the Riemannian volume of $\left(\overline{\Omega_1},g(t)\right)$. Passing to polar normal coordinate, we obtain
	$$\int_0^T\chi_\delta(t)^2\int_0^{\ell_{t,+}(y,\theta)}\int_{S_{x_0,t}(\overline{\Omega_1})}h(\theta)\tilde{F}(t,r,\theta)\, drd\theta dt=0.$$
	Using the fact that $F\in C([0,T]\times\overline{\Omega})$, we deduce that $\tilde{F}\in C([0,T];L^\infty(\Omega_1))$ and sending $\delta\to0$, we obtain
	$$\int_0^{\ell_{t_0,+}(x_0,\theta)}\int_{S_{x_0,t_0}(\overline{\Omega_1})}h(\theta)\tilde{F}(t_0,r,\theta)\, drd\theta=0.$$
	Applying the fact that in this identity $h\in C^\infty( S_{x_0,t_0}(\overline{\Omega_1}))$ is arbitrary chosen, we deduce that
	$$\int_0^{\ell_{t_0,+}(x_0,\theta)}\tilde{F}(t_0,\gamma_{x_0,\theta}(s))\, ds=\int_0^{\ell_{t_0,+}(x_0,\theta)}\tilde{F}(t_0,r,\theta)\, dr=0,\quad (x_0,\theta)\in \pd _+S_{t_0}(\overline{\Omega_1}).$$
	Combining this  with the facts that in this identity $x_0\in\partial\Omega_1$ was arbitrary chosen, the manifold $\left(\overline{\Omega_1},g(t_0)\right)$ is assumed to be simple and  that the geodesic ray transform is injective 
	on simple manifolds, we deduce that $\tilde{F}(t_0,\ccdot)\equiv0$ on $\Omega_1$. Thus $F(t_0,\ccdot)\equiv0$. Combining this with the fact that here $t_0\in(0,T)$ is arbitrary chosen and $F\in C([0,T]\times\overline{\Omega})$, we deduce that $F\equiv 0$.
	
	Now let us fix $m\geq1$, and assume that \eqref{p2a} for this $m$  implies that $F\equiv0$. Fix $G\in C([0,T]\times\overline{\Omega})$  and assume that 
	$$\int_0^T\int_\Omega Gv_1\cdots v_{m+1}\s w\, dxdt=0$$
	for all $v_1,\ldots, v_{m+1}\in C^{1+\frac{\alpha}{2},2+\alpha}([0,T]\times\overline{\Omega}))$ satisfying \eqref{p2b} and all $w\in C^{1+\frac{\alpha}{2},2+\alpha}([0,T]\times\overline{\Omega}))$ satisfying \eqref{p2c}. 
	Fixing $F=Gv_1$, we deduce that $F\equiv 0$ and multiplying $F$ by an arbitrary chosen $w\in C^{1+\frac{\alpha}{2},2+\alpha}([0,T]\times\overline{\Omega}))$ satisfying \eqref{p2c}, and integrating, we deduce that
	$$\int_0^T\int_\Omega Gv_1 w\, dxdt=0$$
	for all $v_1\in C^{1+\frac{\alpha}{2},2+\alpha}([0,T]\times\overline{\Omega}))$ satisfying \eqref{p2b} and all $w\in C^{1+\frac{\alpha}{2},2+\alpha}([0,T]\times\overline{\Omega}))$ satisfying \eqref{p2c}. Then, the above argumentation implies that $G\equiv0$. This proves the assertion.
\end{proof}

\section{Proof of Theorem \ref{t5}}\label{sec_proof thm 1.2}
In all this section, we assume that $n\geq3$ and $\psi(x)=|x-x_0|$, $x\in\Omega$,  for $x_0\in\R^n\setminus\overline{\Omega}$. Note that the function $\psi$ satisfies the eikonal equation 
\begin{align}\label{eikonal psi}
	\left|\nabla_x \psi(x)\right|=1, \text{ for }x\in \Omega.
\end{align}
We start by considering the following new Carleman estimate whose proof is postponed to Appendix \ref{sec_app}.

\begin{lem}\label{l7} Let $q\in L^\infty(Q)$ and $v\in H^1(Q)\cap L^2(0,T;H^2(\Omega))$ satisfy the condition 
	\begin{equation}\label{l7a}v|_{\Sigma}=0,\quad v|_{t=0}=0.\end{equation}
	Then, there exists $\tau_0>0$ depending on $T$, $\Omega$ and $\norm{q}_{L^\infty(Q)}$ such that  for all $\tau>\tau_0$ the following estimate 
	\begin{align}\label{l7b} \begin{split}
			&\tau\int_0^T\int_{\Gamma_{+}(x_0)}e^{-2(\tau^2t+\tau\psi(x))}\abs{\partial_\nu v}^2\abs{\partial_\nu\psi(x) } d\sigma(x)dt+\tau^2\int_Qe^{-2(\tau^2t+\tau\psi(x))}\abs{v}^2dxdt\\
			\leq & C\left(\int_Qe^{-2(\tau^2t+\tau\psi(x))}\abs{(\partial_t-\Delta_x+q)v}^2\, dxdt\right. \\
			& \qquad \left. +\tau\int_0^T\int_{\Gamma_{-}(x_0)}e^{-2(\tau^2t++\tau\psi(x))}\abs{\partial_\nu v}^2\abs{\partial_\nu\psi(x) }d\sigma(x)\, dt\right)
	\end{split}\end{align}
	holds true.
\end{lem}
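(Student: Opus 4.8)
The plan is to prove \eqref{l7b} by the standard conjugation-plus-positive-commutator scheme, adapted to the weight $\varphi(t,x):=\tau^2 t+\tau\psi(x)$ and organized so as to extract the lateral boundary terms. First I would set $w=e^{-\varphi}v$ and compute the conjugated operator $\mathcal L_\varphi w:=e^{-\varphi}(\partial_t-\Delta_x+q)(e^\varphi w)$. A direct computation gives $\mathcal L_\varphi w=\partial_t w-\Delta_x w-2\tau\nabla\psi\cdot\nabla w-\tau(\Delta\psi)w+qw$, where the a priori dominant zeroth order term $(\partial_t\varphi-|\nabla\varphi|^2)w=(\tau^2-\tau^2)w$ cancels thanks to the eikonal equation \eqref{eikonal psi}. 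Since $v|_\Sigma=0$ and $w|_\Sigma=0$ force $\partial_\nu w=e^{-\varphi}\partial_\nu v$ on $\Sigma$, while $|w|^2=e^{-2\varphi}|v|^2$, the claim \eqref{l7b} reduces to an estimate for $w$, namely $\tau\int_{\Gamma_+}(\partial_\nu w)^2\partial_\nu\psi+\tau^2\|w\|_{L^2(Q)}^2\lesssim\|\mathcal L_\varphi w\|_{L^2(Q)}^2+\tau\int_{\Gamma_-}(\partial_\nu w)^2|\partial_\nu\psi|$. Here I would use that $\partial_\nu\psi(x)=(x-x_0)\cdot\nu(x)/|x-x_0|$, so that $\mathrm{sign}(\partial_\nu\psi)$ is exactly the sign defining $\Gamma_\pm(x_0)$; this is what routes the front and back boundary contributions to the correct sides of the inequality.

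Second, I would obtain the interior gain and the boundary terms by testing the conjugated equation against well-chosen multipliers, in the spirit of the energy method of Proposition \ref{p3}. Testing against $w\,e^{\lambda\psi}$ with a fixed (large) auxiliary parameter $\lambda>0$ and integrating by parts, the transport term $-2\tau\nabla\psi\cdot\nabla w$ produces, once the gradient falls on $e^{\lambda\psi}$ and \eqref{eikonal psi} is used, a positive bulk contribution of size $\tau\lambda\int_Q|w|^2e^{\lambda\psi}$; a Young inequality on the source pairing $\int_Q \mathcal L_\varphi w\cdot w\,e^{\lambda\psi}$ weighted by $\tau$ then upgrades this to the desired $\tau^2\|w\|_{L^2}^2$, while the lower order term $qw$ is absorbed for $\tau$ large (this is where $\tau_0$ comes to depend on $\|q\|_{L^\infty(Q)}$). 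To create the lateral boundary terms, which the multiplier $w\,e^{\lambda\psi}$ cannot see because $w|_\Sigma=0$, I would additionally pair $\mathcal L_\varphi w$ with the Rellich-type multiplier $\nabla\psi\cdot\nabla w$: the computation of $2\int_Q(-\Delta_x w)(\nabla\psi\cdot\nabla w)$ yields precisely the boundary term $2\tau\int_\Sigma(\partial_\nu w)^2\partial_\nu\psi$, together with the interior expressions $-4\tau\int_Q(\nabla^2\psi\,\nabla w)\cdot\nabla w+2\tau\int_Q(\Delta\psi)|\nabla w|^2$.

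Third, I would check the signs. For $\psi=|x-x_0|$ one has $\nabla^2\psi=|x-x_0|^{-1}(\mathrm{Id}-\hat r\otimes\hat r)\geq0$ and $\Delta\psi=(n-1)/|x-x_0|$, both smooth and bounded on $\overline\Omega$ since $x_0\notin\overline\Omega$. The interior gradient combination then equals $2\tau\int_Q|x-x_0|^{-1}\big[(n-3)|\nabla w|^2+2(\hat r\cdot\nabla w)^2\big]$, which is nonnegative exactly when $n\geq3$; this is the origin of the dimensional restriction stated in the paper. The time boundary contributions are favorable: the terms at $t=0$ vanish by \eqref{l7a}, while those at $t=T$ (such as $\|\nabla w(T,\cdot)\|_{L^2(\Omega)}^2$) are nonnegative and can be discarded. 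Combining the two multiplier identities, moving the $\Gamma_+$ boundary term to the left and the $\Gamma_-$ term to the right, and absorbing the remaining lower order terms into $\tau^2\|w\|^2$ for $\tau>\tau_0$, gives \eqref{l7b}; all integrations by parts are justified for $v\in H^1(Q)\cap L^2(0,T;H^2(\Omega))$ by a density argument.

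The main obstacle I expect is the simultaneous bookkeeping of the two multipliers: the test function $w\,e^{\lambda\psi}$ delivers the coercive $\tau^2\|w\|^2$ but no boundary data, whereas the Rellich multiplier $\nabla\psi\cdot\nabla w$ delivers the correctly signed $\Gamma_\pm$ boundary terms but an indefinite interior remainder. The delicate point is to form a single linear combination, with weights calibrated in $\tau$ and $\lambda$, for which the positive $\tau^2$ term dominates, the gradient remainder keeps the good sign enforced by $n\geq3$, and the mixed zeroth and first order cross terms (e.g.\ those involving $\Delta\psi$ and $\nabla\Delta\psi$, which are only $O(\tau)$) are controlled by the leading positive quantities. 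Because the present weight is not a limiting Carleman weight, no single clean positive commutator is available, so it is precisely this calibration, rather than any one integration by parts, that carries the content of the argument.
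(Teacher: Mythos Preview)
Your strategy differs from the paper's in a substantive way. The paper does \emph{not} work with the bare weight $\varphi=\tau^2t+\tau\psi$; it convexifies, setting $w=e^{-(\tau^2t+\tau\psi-s\psi^2/2)}v$ with an auxiliary fixed parameter $s>0$, and then expands $\|P_{\tau,s}w\|^2\geq 2\langle P_{\tau,s,+}w,P_{\tau,s,-}w\rangle$ with $\partial_t$ and the transport $-2\tau\nabla\psi\cdot\nabla$ placed together in $P_{\tau,s,-}$. The coercive $\tau^2\|w\|^2$ term there comes from the cross of the zeroth order piece $2s\tau\psi$ (generated by the convexification) with the transport, namely $-8s\tau^2\int_Q\psi\, w\,\nabla\psi\cdot\nabla w=4ns\tau^2\int_Qw^2$. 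Your route to $\tau^2\|w\|^2$ via the multiplier $w\,e^{\lambda\psi}$ (as in Proposition~\ref{p3}) is a legitimate alternative that avoids the convexification, and your Hessian computation showing the nonnegativity for $n\geq3$ is correct and in fact sharper than the bound the paper records.

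There is, however, a concrete gap in your Rellich step. Pairing $\mathcal L_\varphi w$ directly with $\nabla\psi\cdot\nabla w$ produces the term $\int_Q\partial_t w\,(\nabla\psi\cdot\nabla w)\,dxdt$. This is \emph{not} a time-boundary contribution: if you integrate by parts in $t$ and then in $x$ you return to the same integral (the computation is circular), so it cannot be converted into data at $t=T$. Nor is it one of the ``$O(\tau)$ cross terms'' you list; it involves $\partial_t w$, which is not controlled by $\|w\|$, $\|\nabla w\|$, or $\|\mathcal L_\varphi w\|$ alone. The paper sidesteps precisely this term by putting $\partial_t$ and $-2\tau\nabla\psi\cdot\nabla$ in the \emph{same} factor $P_{\tau,s,-}$, so they never cross in $2\langle P_+,P_-\rangle$; the only $\partial_t$ cross is $-2\int_Q\partial_t w\,\Delta w=\int_\Omega|\nabla w(T)|^2\geq0$, which is the $\|\nabla w(T,\cdot)\|^2$ you mention. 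To repair your argument you would either need to replace the naked Rellich multiplier by the full antisymmetric part $P_-w=\partial_tw-2\tau\nabla\psi\cdot\nabla w$ (i.e.\ do the squaring for the boundary extraction and then superimpose your $w\,e^{\lambda\psi}$ identity to supply the missing $\tau^2\|w\|^2$), or else introduce an additional multiplier that controls $\partial_t w$. As written, the ``single linear combination'' you propose cannot close because the $\partial_t w\cdot(\nabla\psi\cdot\nabla w)$ contribution has no absorbing counterpart.
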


Armed with these results we are know in position to complete the proof of Theorem \ref{t5}.

\begin{proof}[Proof of Theorem \ref{t5}]
	Following the proof of Theorem \ref{t1}, we only need to prove that \eqref{t1h} holds true. We will prove this by a recursion argument. Let us first observe that since $\tilde{\Gamma}$ is a neighborhood of $\Gamma_-(x_0)$, there exists $\epsilon>0$ such that $B(x_0,\epsilon)\cap \overline{\Omega}=\emptyset$ and for all $y\in B(x_0,\epsilon)$, we have 
	$$
	\Gamma_-(y,\epsilon):=\left\{x\in\partial\Omega:\ (x-y)\cdot\nu(x)\leq\epsilon\right\}\subset \tilde{\Gamma}.
	$$
	We start by considering \eqref{t1h} for $k=1$. For $j=1,2$, consider $v_{j,1}\in C^{1+\frac{\alpha}{2},2+\alpha}([0,T]\times\overline{\Omega}))$ satisfying \eqref{p2b} with $b=b_j$ and  $w\in C^{1+\frac{\alpha}{2},2+\alpha}([0,T]\times\overline{\Omega}))$ satisfying \eqref{p2c} with $b=b_1$. We assume here that
	$v_{1,1}|_{\Sigma}=h=v_{2,1}|_{\Sigma}$ for some $h\in \mathcal K_0$.  Fixing $v_1=v_{1,1}-v_{2,1}$, we deduce that $v_1$ satisfies the following conditions 
	\begin{align*}
		\begin{cases}
			\partial_t v_1-\Delta  v_1+ \partial_\mu b_1(t,x,u_{1,0})v_1 =F(t,x)  & \mbox{in}\ Q ,
			\\
			v_1=\partial_{\nu}v_1=0 &\mbox{on}\ (0,T)\times \tilde \Gamma,\\
			v(0,x)=0 &\mbox{for } x\in\Omega,
		\end{cases}
	\end{align*}
	with
	$$F(t,x)=\left(\partial_\mu b_2(t,x,u_{2,0}(t,x))-\partial_\mu b_1(t,x,u_{1,0}(t,x))\right)v_{2,1}(t,x),\quad (t,x)\in Q.$$
	Multiplying the above equation by $w$ and integrating by parts, we obtain
	$$\int_0^T\int_{\Omega}\left(\partial_\mu b_2(t,x,u_{2,0})-\partial_\mu b_1(t,x,u_{1,0})\right)v_{2,1}w \,  dx dt-\int_{\Sigma}\partial_\nu v_1(t,x) w(t,x)\, d\sigma(x)dt=0.$$
	Moreover, applying the first order linearization we find
	\begin{align*}
		\left.\partial_{\nu} v_{1,1}\right|_{(0,T)\times \Gamma_-(y,\epsilon)}=\left.\partial_s\mathcal N_{b_1}(f_0+sh)\right|_{(0,T)\times \Gamma_-(y,\epsilon)}
		=&\left.\partial_s\mathcal N_{b_2}(f_0+sh)\right|_{(0,T)\times \Gamma_-(y,\epsilon)}\\
		=&\left.\partial_{\nu} v_{2,1}\right|_{(0,T)\times \Gamma_-(y,\epsilon)}.
	\end{align*}
	and it follows that 
	\begin{equation}\label{t5e}
		\begin{split}
			&\int_0^T\int_{\Omega}(\partial_\mu b_2(t,x,u_{2,0})-\partial_\mu b_1(t,x,u_{1,0}))v_{2,1}w \, dx dt\\
			=&\int_0^T\int_{\partial\Omega\setminus \Gamma_-(y,\epsilon)}\partial_\nu v_1(t,x) w(t,x) \, d\sigma(x) dt
		\end{split}
	\end{equation}		
	with  $v_{2,1}$ (resp. $w$) an arbitrary chosen element of $C^{1+\frac{\alpha}{2},2+\alpha}([0,T]\times\overline{\Omega}))$ satisfying \eqref{p2b} (resp. \eqref{p2c}).

	By density we can extend this identity to $v_{2,1}$ and $w$ two GO solutions of the form \eqref{GO7} and \eqref{GO8} with $\psi(x)=|x-y|$ and $c_\pm$ given by 
	\begin{align*}
		c_{+}(t,x)=&\chi_\delta(t)h\left(\frac{x-y}{|x-y|}\right)|x-y|^{-(n-1)/2},\\
		c_{-}(t,x)=&\chi_\delta(t)|x-y|^{-(n-1)/2},
	\end{align*}
	for $(t,x)\in[0,T]\times(\R^n\setminus\{y\})$ with $h\in C^\infty(\mathbb S^{n-1})$ and $$\chi_\delta (t) =\delta^{-\frac{1}{2}}\chi_*(\delta^{-1}(t-t_0)), \text{ for }\delta\in(0,\min(T-t_0,t_0)),$$ where $\chi_*\in C^\infty_0(0,T)$ satisfies
	$$\int_\R \chi_*(t)^2\, dt=1.$$
	Note that the construction of such GO solutions is a consequence of the fact that, we can find $\Omega_2$ an open neighborhood of $\overline{\Omega}$ such that $\psi\in C^\infty(\overline{\Omega_2})$  solves the eikonal equation
	$$\left|\nabla_x\psi(x)\right|^2=1,\quad x\in\Omega_2,$$
	as well as an application of Proposition \ref{p3}. In addition, we  built this class of GO solutions by following the arguments used in Section 5.1 where the polar normal coordinates will be replaced by polar coordinates centered at $y$. Note that in such coordinates $\psi=r$ and the transport equations \eqref{eq:transport_polar} are just 
	$$
	\partial_r c_\pm + \left(\frac{\partial_r\beta}{4\beta}\right) c_\pm=0,
	$$
	where $\beta$ is an angle dependent multiple of $r^{2(n-1)}$.

	With this choice of the functions $v_{2,1}$ and $w$, we obtain by Cauchy-Schwarz inequality that
	\begin{align}\label{t5f}
		\begin{split}
			&\abs{\int_0^T\int_{\partial\Omega\setminus \Gamma_-(y,\epsilon)}\partial_\nu v_1(t,x) w(t,x)\, d\sigma(x)dt} \\
			\leq & C\left(\int_0^T\int_{\partial\Omega\setminus \Gamma_-(y,\epsilon)}|\partial_\nu v_1(t,x)|^2e^{-2(\tau^2t+\tau\psi(x))} d\sigma(x)dt\right)^{\frac{1}{2}}.
		\end{split}
	\end{align}
	In addition, the Carleman estimate \eqref{l7b} and the fact that $\left. \partial_\nu v_1\right|_{(0,T)\times\Gamma_-(y)}=0$ imply that, for $\tau>0$ sufficiently large, we have 
	$$\begin{aligned}&\int_0^T\int_{\partial\Omega\setminus \Gamma_-(y,\epsilon)}|\partial_\nu v_1(t,x)|^2e^{-2(\tau^2t+\tau\psi(x))} \, d\sigma(x)dt\\
		\leq &C\epsilon^{-1}\int_0^T\int_{\partial\Omega\setminus \Gamma_-(y,\epsilon)}|\partial_\nu v_1(t,x)|^2e^{-2(\tau^2t+\tau\psi(x))}\partial_\nu\psi(x) \, d\sigma(x)dt\\
		\leq &C\epsilon^{-1}\int_0^T\int_{ \Gamma_+(y)}|\partial_\nu v_1(t,x)|^2e^{-2(\tau^2t+\tau\psi(x))}\partial_\nu\psi(x) \, d\sigma(x)dt\\
		\leq &\underbrace{C\tau^{-1}\int_Q \left|\partial_t v_1-\Delta  v_1+ \partial_\mu b_1(t,x,u_0)v_1\right|^2e^{-2(\tau^2t+\tau\psi(x))}\, dxdt}_{\text{Here we use the Carleman estimate \eqref{l7b} with }\left.\partial_\nu v_1\right|_{(0,T)\times\Gamma_-(y)}=0.}\\
		\leq &C\tau^{-1}\int_Q \left|(\partial_\mu b_2(t,x,u_0)-\partial_\mu b_1(t,x,u_0))v_{2,1}\right|^2e^{-2(\tau^2t+\tau\psi(x))}\, dxdt\\
		\leq &C\tau^{-1}\int_Q\left|\partial_\mu b_2(t,x,u_0)-\partial_\mu b_1(t,x,u_0)\right|^2|c_+|^2\, dxdt\\
		\leq &C\tau^{-1},\end{aligned}$$
	where $C>0$ is a constant independent of $\tau$. Therefore, for $\tau>0$ sufficiently large, we obtain 
	$$\abs{\int_0^T\int_{\partial\Omega\setminus \Gamma_-(y,\epsilon)}\partial_\nu v_1(t,x) w(t,x)\, d\sigma(x) dt}\leq C\tau^{-\frac{1}{2}}$$
	and in a similar way to Proposition \ref{p2}, sending $\tau\to+\infty$, we find 
	$$\int_Q(\partial_\mu b_2(t,x,u_0)-\partial_\mu b_1(t,x,u_0))c_+c_-\, dxdt=0.$$
	By using the polar coordinates, sending $\delta\to0$ and repeating the arguments of Proposition \ref{p2}, one can get
	$$\int_0^{+\infty}\int_{\mathbb S^{n-1}} G(t,y+r\theta)h(\theta)\, d\theta dr=0,\quad t\in(0,T),$$
	where $G:=\partial_\mu b_2(t,x,u_{2,0})-\partial_\mu b_1(t,x,u_{1,0})$ in $Q$  extended to  $(0,T)\times\R^n$ by zero.

	Using the fact that $h\in C^\infty(\mathbb S^{n-1})$ is arbitrary chosen, we get
	$$
	\int_0^{+\infty} G(t,y+r\theta)\, dr=0,\quad t\in(0,T),\ \theta\in\mathbb S^{n-1},
	$$
	and the condition on the support of $G$ implies that
	$$\int_\R G(t,y+s\theta) \, ds=0,\quad t\in(0,T),\ \theta\in\mathbb S^{n-1}.$$
	Since this last identity holds true for all $y\in B(x_0,\epsilon)$, we obtain
	\begin{equation}\label{t5g}\int_\R G(t,y+s\theta)\, ds=0,\quad t\in(0,T),\ \theta\in\mathbb S^{n-1},\ y\in B(x_0,\epsilon).\end{equation}
	In addition, since $G=0$ on 	$(0,T)\times\R^n\setminus Q$, we know that
	$$G(t,x)=0,\quad t\in(0,T),\ x\in B(x_0,\epsilon)$$
	and, combining this with \eqref{t5g}, we are in position to apply \cite[Theorem 1.2]{IM} in order to deduce that, for all $t\in(0,T)$, $G(t,\cdot)\equiv0$. It follows that  $G\equiv0$ and \eqref{t1h} holds true for $k=1$.
	
	Now, let us fix $m\in\mathbb N$ and assume that \eqref{t1h} holds true for $k=1,\ldots,m$. Consider $v_1,\ldots,v_{m+1}\in C^{1+\frac{\alpha}{2},2+\alpha}([0,T]\times\overline{\Omega}))$ satisfying \eqref{p2b} with $b=b_1$ and  $w\in C^{1+\frac{\alpha}{2},2+\alpha}([0,T]\times\overline{\Omega}))$ satisfying \eqref{p2b} with $b=b_1$. We fix $h_j=v_j|_{\Sigma}$, $j=1,\ldots,m+1$, and proceeding to the higher order linearization described in Lemma \ref{l5}, we obtain
	$$
	\left.\partial_{\nu(a)} w^{(m+1)}_j\right|_{\Sigma}=\left.\partial_{s_{1}}\ldots\partial_{s_{m+1}}\mathcal N_{b_j}(f_0+s_1h_1+\ldots+s_{m+1}h_{m+1})\right|_{s=0},$$
	with $s=(s_1,\ldots,s_{m+1})$ and $w^{(m+1)}_j$ solving \eqref{eq6} with $b=b_j$ for $j=1,2$. Then, \eqref{t5a} implies 
	$$\left.\partial_{\nu(a)} w^{(m+1)}_1\right|_{(0,T)\times \tilde{\Gamma}}=\left.\partial_{\nu(a)} w^{(m+1)}_2\right|_{(0,T)\times \tilde{\Gamma}}$$
	and, fixing $w^{(m+1)}=w^{(m+1)}_1-w^{(m+1)}_2$ in $Q$, and applying Lemma \ref{l5}, we deduce that $w^{(m+1)}$ satisfies the following condition
	\begin{align*}
		\begin{cases}
			\partial_t w^{(m+1)}-\Delta  w^{(m+1)}+ \partial_\mu b_1(t,x,u_0) w^{(m+1)}=\mathcal K  & \mbox{ in } Q,
			\\
			w^{(m+1)}=0 &\mbox{ on }\Sigma, \\
			\partial_{\nu} w^{(m+1)}=0 &\mbox{ on }(0,T)\times \tilde{\Gamma}, \\
			w^{(m+1)}(0,x)=0 &\mbox{ for } x\in\Omega,
		\end{cases}
	\end{align*}
	where $\mathcal K=\left(\partial_\mu^{m+1} b_2(t,x,u_{2,0})-\partial_\mu^{m+1} b_1(t,x,u_{1,0})\right)v_{1}\cdot\ldots\cdot v_{m+1}$. Multiplying this equation by $w$ and integrating by parts, we obtain
	$$\begin{aligned}&\int_0^T\int_\Omega (\partial_\mu^{m+1} b_2(t,x,u_{2,0})-\partial_\mu^{m+1} b_1(t,x,u_{1,0}))v_1\cdot\ldots\cdot v_{m+1}\cdot w\, dxdt\\
		&\quad -\int_0^T\int_{\partial\Omega\setminus \tilde{\Gamma}}\partial_\nu w^{(m+1)} w(t,x)d\sigma(x)\, dt=0.\end{aligned}$$
	We choose $v_{m+1}$ and $w$ two GO solutions of the form \eqref{GO7} and \eqref{GO8} with $\psi(x)=|x-y|$, $y\in B(x_0,\epsilon)$, and we fix 
	$$H(t,x)=\left(\partial_\mu^{m+1} b_2(t,x,u_{2,0})-\partial_\mu^{m+1} b_1(t,x,u_{1,0})\right)v_{1}\cdot\ldots\cdot v_{m}(t,x),\quad (t,x)\in Q$$
	that we extend by zero to $(0,T)\times\R^n$. Applying  Cauchy-Schwarz inequality and the fact that $\Gamma_-(y,\epsilon)\subset\tilde{\Gamma}$, we find
	\begin{align*}
		\begin{split}
			&\abs{\int_0^T\int_{\partial\Omega\setminus \tilde{\Gamma}}\partial_\nu w^{(m+1)}(t,x) w(t,x)\, d\sigma(x)dt} \\
			\leq & C\left(\int_0^T\int_{\partial\Omega\setminus \Gamma_-(y,\epsilon)}|\partial_\nu w^{(m+1)}(t,x)|^2e^{-2(\tau^2t+\tau\psi(x))} d\sigma(x)dt\right)^{\frac{1}{2}}.
		\end{split}
	\end{align*}
	and, applying the  Carleman estimate \eqref{l7b} and repeating the above argumentation, we have
	\begin{align*}
		\begin{split}
			&\abs{\int_0^T\int_{\partial\Omega\setminus \Gamma_-(y,\epsilon)}\partial_\nu w^{(m+1)}(t,x) w(t,x)\, d\sigma(x)dt}^2 \\
			\leq & C\tau^{-1}\int_Q|H(t,x)|^2|v_{m+1}(t,x)|^2e^{-2(\tau^2t+\tau\psi(x))} dxdt\\
			\leq & C\tau^{-1}.
		\end{split}
	\end{align*}
	Thus, we obtain 
	$$\lim_{\tau\to+\infty}\int_Q H(t,x) v_{m+1}(t,x) w(t,x)\, dxdt=0$$
	and repeating the above argumentation, we get
	$$\left(\partial_\mu^{m+1} b_2(t,x,u_{2,0})-\partial_\mu^{m+1} b_1(t,x,u_{1,0})\right)v_{1}\cdot\ldots\cdot v_{m}(t,x)=H(t,x)=0,\quad (t,x)\in Q.$$
	Multiplying this expression by any $w\in C^{1+\frac{\alpha}{2},2+\alpha}([0,T]\times\overline{\Omega}))$ satisfying \eqref{p2b} with $b=b_1$, we obtain 
	$$\int_Q \left(\partial_\mu^{m+1} b_2(t,x,u_{2,0})-\partial_\mu^{m+1} b_1(t,x,u_{1,0})\right)v_{1}\cdot\ldots\cdot v_{m}w\, dxdt=0$$
	and applying Proposition \ref{p2} we can conclude that \eqref{t1h} holds true for $k=1,\ldots,m+1$. It follows that \eqref{t1h} holds true for all $k\in\mathbb N$ and repeating the arguments used in the second step of the proof of Theorem \ref{t1} we can conclude that \eqref{t5a} implies \eqref{t1d} with the function $\varphi=u_{2,0}-u_{1,0}$ satisfying \eqref{t5b}-\eqref{t5bb}.	
\end{proof}

\section{Breaking the gauge class}\label{sec_gau_breaking}

This section is devoted to the proof of the positive answers that we  give to problem (IP2) in the theorems and corollaries of Section \ref{sec 2.2}. 

\begin{proof}[Proof of Corollary \ref{c1}]

	We start by assuming that the condition of Theorem \ref{t1} are fulfilled and by proving that \eqref{t1c} implies $b_1=b_2$. By Theorem \ref{t1}, condition \eqref{t1c} implies that there exists $\varphi\in   C^{1+\frac{\alpha}{2},2+\alpha}([0,T]\times\overline{\Omega})$ satisfying \eqref{gauge1} such that
	\begin{align}\label{c1b}
		b_1(t,x,\mu)=b_2(t,x,\mu+\varphi(t,x))+\rho(t,x) \partial_t \varphi(t,x)+\mathcal A(t)  \varphi(t,x),\quad (t,x,\mu)\in Q\times\R.
	\end{align}
	We will prove that $\varphi\equiv0$ which implies that $b_1=b_2$.
	Choosing $\mu=\kappa(t,x)$ and applying \eqref{c1a}, we obtain
	$$\begin{aligned}&\rho(t,x) \partial_t \varphi(t,x)+\mathcal A(t)  \varphi(t,x)+b_2(t,x,\kappa(t,x)+\varphi(t,x))-b_2(t,x,\kappa(t,x))\\
		=&b_1(t,x,\kappa(t,x))-b_2(t,x,\kappa(t,x))=0,\quad (t,x)\in Q.\end{aligned}$$
	Moreover, we have
	$$\begin{aligned}
		&b_2(t,x,\kappa(t,x)+\varphi(t,x))-b_2(t,x,\kappa(t,x))\\
		=&\left(\int_0^1\partial_\mu b_2(t,x,\kappa(t,x)+s\varphi(t,x))ds\right)\varphi(t,x)\\
		:=&q(t,x)\varphi(t,x),\quad \text{ for } (t,x)\in Q.\end{aligned}$$
	Therefore, $\varphi$ fulfills the following condition
	\begin{align}\label{c2ff}
		\begin{cases}
			\rho(t,x) \partial_t \varphi(t,x)+\mathcal A(t)  \varphi(t,x)+q(t,x)\varphi(t,x)  =  0 & \text{ in }
			Q,\\ 
			\varphi(t,x) =  0 &  \text{ on } \Sigma,\\
			\varphi(0,x)  =  0 & \text{ for } x \in \Omega,
		\end{cases}
	\end{align}
	and the uniqueness of solutions  for this problem implies that $\varphi\equiv0$. Thus, \eqref{c1b} implies $b_1=b_2$.

	Using similar arguments one can check that, by assuming  the conditions of Theorem \ref{t5},  \eqref{t5a} implies also that $b_1=b_2$.
\end{proof}

\begin{proof}[Proof of Corollary \ref{c2}]
	Let us  assume that the conditions of Theorem \ref{t1} and \eqref{t1c} are fulfilled. By Theorem \ref{t1} there exists $\varphi\in   C^{1+\frac{\alpha}{2},2+\alpha}([0,T]\times\overline{\Omega})$ satisfying \eqref{gauge1} such that 
	\eqref{c1b} is fulfilled. In particular, by choosing $\mu=0$, we have
	$$b_1(t,x,0)-b_2(t,x,0)=b_2(t,x,-\varphi(t,x))-b_2(t,x,0)+\rho(t,x) \partial_t \varphi(t,x)+\mathcal A(t)  \varphi(t,x)\text{ in }  Q.$$
	Combining this with  \eqref{c2b} and \eqref{gauge1}, we deduce that $\varphi$ fulfills the following condition
	\begin{align}\label{c2f}
		\begin{cases}
			\rho(t,x) \partial_t \varphi(t,x)+\mathcal A(t)  \varphi(t,x)+q(t,x)\varphi(t,x)  =  h(x)G(t,x) & \text{ in } 
			Q,\\ 
			\varphi(t,x)=\partial_{\nu(a)}\varphi(t,x)  =  0 & \text{ on }\Sigma,\\
			\varphi(0,x)  =  0 & \text{ for }x \in \Omega,
		\end{cases}
	\end{align}
	with
	$$q(t,x)=\int_0^1\partial_\mu b_2(t,x,s\varphi(t,x))\, ds,\quad (t,x)\in Q.$$
	Moreover, following the proof of Theorem \ref{t1}, we know that $\varphi=u_{2,0}-u_{1,0}$ and the additional assumption \eqref{c2c},
	\[
	u_{1,0}(\theta,x)=u_{2,0}(\theta,x),
	\]
	implies that
	$$\varphi(\theta,x)=0,\quad x\in\Omega.$$
	Combining this with \eqref{c2a}, 
	\[
	\inf_{x\in \Omega}\abs{G(\theta,x)}>0,
	\]
	\eqref{c2f} and applying  \cite[Theorem 3.4]{IY},  we obtain $f\equiv0$. Then the source term in \eqref{c2f} is zero and uniqueness of solutions implies that $\varphi\equiv0$. Thus, \eqref{c1b} implies $b_1=b_2$. The last statement of the corollary can be deduced from similar arguments.
\end{proof}

We are ready to prove Theorem \ref{c3}.
\begin{proof}[Proof of Theorem \ref{c3}]
	We assume first that  the conditions of Theorem \ref{t1} and \eqref{t1c} are fulfilled. Let us first prove that we can assume that $N_1=N_2$. Indeed, assuming that  $N_1\neq N_2$, we may assume without loss of generality that $N_1>N_2$. From \eqref{c1b} we deduce that 
	$$\begin{aligned}b_{1,N_1}(t,x)&=\lim_{\lambda\to+\infty}\frac{b_1(t,x,\lambda)}{\lambda^{N_1}}\\
		&=\lim_{\lambda\to+\infty}\frac{b_2(t,x,\lambda+\varphi(t,x))+\rho(t,x) \partial_t \varphi(t,x)+\mathcal A(t)  \varphi(t,x)}{\lambda^{N_1}}=0,\end{aligned}$$
	for $(t,x)\in Q$. In the same way, we can prove by iteration that 
	$$
	b_{1,N_1}=b_{1,N_1-1}=\cdots=b_{1,N_2+1}\equiv0.
	$$
	Therefore, from now on we assume that $N_1=N_2=N$. In view of \eqref{c1b}, for all $(t,x,\mu)\in Q\times\R$, we get by renumbering the sums 
	$$\begin{aligned}\sum_{k=0}^Nb_{1,k}(t,x)\mu^k
		=&\sum_{k=1}^Nb_{2,k}(t,x)\left(\sum_{j=1}^k\left(\begin{array}{l}k\\ j\end{array}\right)\varphi(t,x)^{k-j}\mu^j\right) \\
		&+\rho(t,x) \partial_t \varphi(t,x)+\mathcal A(t)  \varphi(t,x)+b_2(t,x,\varphi(t,x))\\
		=&\sum_{j=1}^N\left(\sum_{k=j}^Nb_{2,k}(t,x)\left(\begin{array}{l}k\\ j\end{array}\right)\varphi(t,x)^{k-j}\right)\mu^j \\
		&+\rho(t,x) \partial_t \varphi(t,x)+\mathcal A(t)  \varphi(t,x)+b_2(t,x,+\varphi(t,x)).\end{aligned}$$
	It follows that
	\begin{align}\label{c3h}
		b_{1,j}(t,x)=\sum_{k=j}^Nb_{2,k}(t,x)\left(\begin{array}{l}k\\ j\end{array}\right)\varphi(t,x)^{k-j},\quad (t,x)\in Q,\ j=1,\ldots,N,
	\end{align}
	and 
	\begin{align}\label{c3i}
		\begin{split}
			b_{1,0}(t,x)-b_{2,0}(t,x)
			=\rho(t,x) \partial_t \varphi(t,x)+\mathcal A(t)  \varphi(t,x)+b_2(t,x,\varphi(t,x))-b_2(t,x,0),
		\end{split}
	\end{align}
	for $(t,x)\in Q$.

	Applying \eqref{c3h} with $j=N$ and $j=N-1$, we obtain
	\begin{align}\label{c3j}
		b_{1,N}=b_{2,N},\quad b_{1,N-1}=b_{2,N}\varphi+b_{2,N-1}.
	\end{align}
	Moreover, the fact that the condition \eqref{c3b} holds true on the dense set $J$ combined with the fact that $b_{j,k}\in C([0,T]\times\overline{\Omega})$, $j=1,2$ and $k=N-1,N$, imply that
	\begin{align*}
		\min \left(\left|(b_{1,N-1}-b_{2,N-1})(t,x)\right|, \ \sum_{j=1}^2\left|(b_{j,N}-b_{j,N-1})(t,x)\right|\right)=0,\ (t,x)\in (0,T)\times\omega.
	\end{align*}
	This condition implies that for all $(t,x)\in(0,T)\times\omega$ we have either $b_{2,N-1}(t,x)=b_{1,N-1}(t,x)$ or $b_{j,N}(t,x)=b_{j,N-1}(t,x)$, $j=1,2$. Combining this with \eqref{c3j} and the assumption that $|b_{1,N}(t,x)|>0$ for $(t,x)\in J$,
	we deduce that $\varphi=0$ on $(0,T)\times\omega$. Thus \eqref{c3i} implies
	$$\left(b_{1,0}-b_{2,0}\right)(t,x)=\rho \partial_t \varphi(t,x)+\mathcal A(t)  \varphi(t,x)+b_2(t,x,\varphi(t,x))-b_2(t,x,0)=0,$$
	for $(t,x)\in (0,T)\times\omega$.
	Finally, the assumption \eqref{c3d} implies that $b_{1,0}=b_{2,0}$ everywhere on $Q$. Thus  $\varphi$ satisfies \eqref{c2ff} with
	$$q(t,x):=\int_0^1\partial_\mu b_2(t,x,s\varphi(t,x))\, ds,\quad (t,x)\in Q.$$
	Therefore, the uniqueness of solutions of \eqref{c2ff} implies that $\varphi\equiv0$ and it follows that $b_1=b_2$. The last statement of the theorem can be proved with similar arguments.
\end{proof}

\begin{proof}[Proof of Theorem \ref{c4}]
	We assume that the conditions of Theorem \ref{t1} and \eqref{t1c} are fulfilled.  We start by showing that $\partial_\mu h_1=\partial_\mu h_2$. For this purpose, combining \eqref{gauge1} and \eqref{c1b} with \eqref{c4a} we obtain
	$$b_{1,1}(t,x)h_1(t,\mu)+b_{1,0}(t,x)=b_{2,1}(t,x)h_2(t,\mu)+b_{2,0}(t,x)+\mathcal A(t)  \varphi(t,x),\quad (t,x,\mu)\in\Sigma\times\R.$$
	Differentiating both sides of this identity with respect to $\mu$, we get
	$$b_{1,1}(t,x)\partial_\mu h_1(t,\mu)=b_{2,1}(t,x)\partial_\mu h_2(t,\mu) ,\quad (t,x,\mu)\in\Sigma\times\R$$
	and \eqref{c4d} implies that
	$$b_{1,1}(t,x_t)(\partial_\mu h_1(t,\mu)-\partial_\mu h_2(t,\mu))=0 ,\quad (t,\mu)\in(0,T)\times\R,$$
	with $b_{1,1}(t,x_t)\neq0$, $t\in(0,T)$. It follows that $\partial_\mu h_1=\partial_\mu h_2$.
	
	Now let us show that the function $\varphi$ of \eqref{c1b} is identically zero. Fixing $t\in(0,T)$ and applying the derivative at order $n_t$ with respect to $\mu$  on both side of \eqref{c1b}, we get
	$$b_{1,1}(t,x)\partial_\mu^{n_t} h_1(t,\mu)=b_{2,1}(t,x)\partial_\mu^{n_t} h_2(t,\mu+\varphi(t,x))=b_{2,1}(t,x)\partial_\mu^{n_t} h_1(t,\mu+\varphi(t,x)) ,$$
	for $(x,\mu)\in \Omega\times\R$. Fixing $\mu=\mu_t+\varphi(t,x)$ and applying \eqref{c4b}, we get
	$$b_{1,1}(t,x)\partial_\mu^{n_t} h_1(t,\mu_t-\varphi(t,x))=b_{2,1}(t,x)\partial_\mu^{n_t} h_1(t,\mu_t)=0 ,\quad x\in \Omega$$
	and \eqref{c4c} implies
	$$\partial_\mu^{n_t} h_1(t,\mu_t-\varphi(t,x))=0 ,\quad x\in \Omega.$$
	On the other hand, since $\R\ni\mu\mapsto\partial_\mu^{n_t} h_1(t,\mu)$ is analytic either it is uniformly vanishing or its zeros are isolated. By \eqref{c4b} we have that $\partial_\mu^{n_t} h_1(t,\ccdot)\not\equiv0$ for  $t\in(0,T)$. Thus the zeros of $\R\ni\mu\mapsto\partial_\mu^{n_t} h_1(t,\mu)$ are isolated. Using the fact that $\overline{\Omega}\ni x\mapsto\varphi(t,x)$ is continuous, 
	we deduce 
	that the map $\overline{\Omega}\ni x\mapsto\varphi(t,x)$ is constant. 
	Then, recalling that $\varphi(t,x)=0$ for $x\in\partial\Omega$, we deduce that $\varphi(t,\ccdot)\equiv0$. Since here $t\in(0,T)$ is arbitrary chosen we deduce that $\varphi\equiv0$ and it follows that $b_1=b_2$.
	The last statement of the theorem can be proved with similar arguments. 
\end{proof}

\begin{proof}[Proof of Theorem \ref{c5}]
	We will only consider the other statement of the theorem since the last statement can be deduced from similar arguments. Namely, we will prove  that \eqref{t1c} and the conditions of Theorem \ref{t1} imply that $b_1=b_2$. We start by showing that $\partial_\mu h_1=\partial_\mu h_2$. For this purpose, combining \eqref{gauge1} and \eqref{c1b} with \eqref{c5a} we obtain
	$$b_{1,1}(t,x)h_1(t,b_{1,2}(t,x)\mu)+b_{1,0}(t,x)=b_{2,1}(t,x)h_2(t,b_{2,2}(t,x)\mu)+b_{2,0}(t,x)+\mathcal A(t)  \varphi(t,x),$$
	for $(t,x,\mu)\in\Sigma\times\R$.
	Differentiating both sides of this identity with respect to $\mu$, we get
	$$b_{1,2}(t,x)b_{1,1}(t,x)\partial_\mu h_1(t,b_{1,2}(t,x)\mu)=b_{2,2}(t,x)b_{2,1}(t,x)\partial_\mu h_2(t,b_{2,2}(t,x)\mu) ,\quad (t,x,\mu)\in\Sigma\times\R$$
	and \eqref{c5d} implies that
	$$b_{1,1}(t,x_t)b_{1,2}(t,x_t)(\partial_\mu h_1(t,b_{1,2}(t,x_t)\mu)-\partial_\mu h_2(t,b_{1,2}(t,x_t)\mu))=0 ,\quad (t,\mu)\in(0,T)\times\R,$$
	with $b_{1,1}(t,x_t)\neq0$ and $b_{1,2}(t,x_t)\neq0$, $t\in(0,T)$. It follows that $\partial_\mu h_1=\partial_\mu h_2$.
	
	Now let us show that the function $\varphi$ of \eqref{c1b} is identically zero. Fixing $t\in(0,T)$ and applying the derivative at order $n_t$ with respect to $\mu$  on both side of \eqref{c1b}, we get
	$$\begin{aligned}&b_{1,1}(t,x)(b_{1,2}(t,x))^{n_t}\partial_\mu^{n_t} h_1(t,b_{1,2}(t,x)\mu)\\
		&= b_{2,1}(t,x))(b_{2,2}(t,x))^{n_t}\partial_\mu^{n_t} h_2(t,b_{2,2}(t,x)(\mu+\varphi(t,x)))\\
		&= b_{2,1}(t,x)(b_{2,2}(t,x))^{n_t}\partial_\mu^{n_t} h_1(t,b_{2,2}(t,x)(\mu+\varphi(t,x))) ,\quad (x,\mu)\in \Omega\times\R.\end{aligned}$$
	Fixing $\mu=-\varphi(t,x)$ and applying \eqref{c5d}, we get
	$$b_{1,1}(t,x)(b_{1,2}(t,x))^{n_t}\partial_\mu^{n_t} h_1(t,-b_{1,2}(t,x)\varphi(t,x))=b_{2,1}(t,x)(b_{2,2}(t,x))^{n_t}\partial_\mu^{n_t} h_1(t,0)=0 ,\quad x\in \Omega$$
	and \eqref{c5c} implies
	$$\partial_\mu^{n_t} h_1(t,-b_{1,2}(t,x)\varphi(t,x)))=0 ,\quad x\in \Omega.$$
	On the other hand, since $\R\ni\mu\mapsto\partial_\mu^{n_t} h_1(t,\mu)$ is analytic and $\overline{\Omega}\ni x\mapsto b_{1,2}(t,x)\varphi(t,x)$ is continuous, we deduce that  the map $\overline{\Omega}\ni x\mapsto b_{1,2}(t,x)\varphi(t,x)$ is constant. In view of \eqref{c5b}, we can conclude that  $\partial_\mu^{n_t} h_1(t,\ccdot)\not\equiv0$ and the map $\overline{\Omega}\ni x\mapsto b_{1,2}(t,x)\varphi(t,x)$ is constant. Then, recalling that $\varphi(t,x)=0$ for $x\in\partial\Omega$ and applying \eqref{c5c}, we deduce that $\varphi(t,\ccdot)\equiv0$. Since here $t\in(0,T)$ is arbitrary chosen we deduce that $\varphi\equiv0$ and it follows that $b_1=b_2$.
\end{proof}

\begin{proof}[Proof of Corollary \ref{c7}]
	Again, we will only consider the first statement of the corollary as the other statement follows similarly. Namely, we will prove  that \eqref{t1c} and the conditions of Theorem \ref{t1} imply that $b_1=b_2$. For this purpose, we only need to prove that that the function $\varphi$ of \eqref{c1b} is identically zero. We start by  assuming that condition (i) is fulfilled. Fixing $x\in\Omega$ and applying the derivative at order $n_x$ with respect to $\mu$  on both side of \eqref{c1b}, we get
	$$\begin{aligned}&b_{1,1}(t,x)(b_{1,2}(t,x))^{n_x}\partial_\mu^{n_x} G(x,b_{1,2}(t,x)\mu)\\
		&= b_{2,1}(t,x)(b_{2,2}(t,x))^{n_x}\partial_\mu^{n_x} G(x,b_{2,2}(t,x)(\mu+\varphi(t,x)))\\
		&= b_{2,1}(t,x)(b_{2,2}(t,x))^{n_x}\partial_\mu^{n_x} G(x,b_{2,2}(t,x)(\mu+\varphi(t,x)))\\
		&= b_{2,1}(t,x)(b_{1,2}(t,x))^{n_x}\partial_\mu^{n_x} G(x,b_{1,2}(t,x)(\mu+\varphi(t,x))) ,\quad (t,\mu)\in (0,T)\times\R.\end{aligned}$$
	Applying \eqref{c5c}, fixing $\mu=\frac{\mu_x}{b_{1,2}(t,x)}-\varphi(t,x)$ and using \eqref{c7b}, we get
	$$b_{1,1}(t,x)(b_{1,2}(t,x))^{n_x}\partial_\mu^{n_x} G(x,\mu_x+b_{1,2}(t,x)\varphi(t,x))=b_{2,1}(t,x)(b_{1,2}(t,x))^{n_x}\partial_\mu^{n_x} G(x,\mu_x)=0 ,$$
	for $t\in(0,T)$.
	Then, \eqref{c5c} implies
	$$ \partial_\mu^{n_x} G(x,\mu_x+b_{1,2}(t,x)\varphi(t,x))=0 ,\quad t\in (0,T).$$
	On the other hand, since $\R\ni\mu\mapsto\partial_\mu^{n_x} G(x,\mu)$ is analytic and $[0,T]\ni t\mapsto b_{1,2}(t,x)\varphi(t,x)$ is continuous, we deduce that  either  $\partial_\mu^{n_x} G(x,\ccdot)\equiv0$ or  that the map $[0,T]\ni t\mapsto b_{1,2}(t,x)\varphi(t,x)$ is constant. Since  $\partial_\mu^{n_x} G(x,\ccdot)\not\equiv0$ we deduce that the map $[0,T]\ni t\mapsto b_{1,2}(t,x)\varphi(t,x)$ is constant. Then, recalling that $\varphi(0,\ccdot)=0$  and applying \eqref{c5c}, we deduce that, for all $t\in[0,T]$, $\varphi(t,x)=0$. Since here $x\in\Omega$ is arbitrary chosen we deduce that $\varphi\equiv0$ and it follows that $b_1=b_2$.
	
	Combining the above argumentation and the arguments used for the proof of Theorem \ref{c5}, one can easily check that \eqref{t1c} implies also that $b_1=b_2$ when  condition (ii) is fulfilled. This completes the proof of the corollary.
\end{proof}

\begin{proof}[Proofs of Corollary \ref{cor: unique_poten_partial}]
	With the conclusion of Theorem \ref{t5} at hand, combined with $b_j(t,x,\mu)=q_j(t,x)\mu$ for $(t,x,\mu)\in Q\times \R$ and $j=1,2$, both \eqref{t1d} and \eqref{t1d_new} yield that 
	\begin{align}\label{unique of potentials}
		q_1(x,t)\mu=S_\varphi (q_2(x,t)\mu) 
		=q_2(t,x)(\mu+\varphi(t,x))+\rho(t,x) \partial_t \varphi(t,x)+\mathcal A(t)  \varphi(t,x),
	\end{align}
	for any $(t,x,\mu)\in Q\times \R$. In particular, plugging $\mu=0$ into \eqref{unique of potentials}, the function $\varphi$ satisfies the IBVP
	\begin{align*}
		\begin{cases}
			\rho(t,x) \partial_t \varphi(t,x)+\mathcal A(t)  \varphi(t,x)+q_2(t,x)\varphi(t,x)=0 & \text{ in }Q, \\
			\varphi(t,x)=0  & \text{ on }\Sigma,\\
			\varphi(0,x)=0 & \text{ in }\Omega.
		\end{cases}
	\end{align*}
	By the uniqueness of the above IBVP, we must have $\varphi\equiv0$ in $Q$.
	Now, by using \eqref{unique of potentials} again, we have $q_1(t,x)\mu=q_2(t,x)\mu$, for all $\mu \in \R$, which implies $q_1=q_2$ as desired. This completes the proof.
\end{proof}

\section{Application to the simultaneous determination of nonlinear and source terms}\label{sec_simul}

One of the important application of our results is to inverse source problems, where the aim is to recover both the source and nonlinear terms simultaneously. In this section, we consider the following IBVP 
\begin{align}\label{eq100}
	\begin{cases}
		\rho(t,x) \partial_t u(t,x)+\mathcal A(t)  u(t,x)+ d(t,x,u(t,x))  =  F(t,x) & \text{ in } Q,\\
		u(t,x) = f(t,x) & \text{ on } \Sigma, \\  
		u(0,x)  =  0 & \text{ for }x \in \Omega,
	\end{cases}
\end{align}
with $d\in \mathbb A(\R;C^{\frac{\alpha}{2},\alpha}([0,T]\times\overline{\Omega}))$ and $F\in C^{\frac{\alpha}{2},\alpha}([0,T]\times\overline{\Omega})$ satisfying the conditions 
\begin{align}\label{F}
	F(0,x)=0,\quad x\in\partial\Omega,
\end{align}
\begin{align}\label{d}
	d(t,x,0)=0,\quad (t,x)\in Q.
\end{align}
The latter condition is just for presentational purposes and can be removed by redefining $F$.
In a similar way to the problem studied above we assume here that there exists $f=f_0\in\mathcal K_0$ such that \eqref{eq100} admits a unique solution for $f=f_0$. Then, applying Proposition \ref{p1}, we can prove  that there exists $\epsilon>0$, depending on  $a$, $\rho$, $d$, $F$, $f_0$, $\Omega$, $T$, such that, for all $f\in \mathbb B(f_0,\epsilon)$,  \eqref{eq100} admits a unique solution $u_f\in  C^{1+\frac{\alpha}{2},2+\alpha}([0,T]\times\overline{\Omega})$ that lies in a sufficiently small neighborhood of the solution $u_0$ of \eqref{eq100}  when $f=f_0$.
Using these  properties, we can define the parabolic DN map
$$\mathcal M_{(d,F)}:\mathbb B(f_0,\epsilon)\ni f\mapsto \left. \partial_{\nu(a)} u(t,x)\right|_{\Sigma},$$
where $u$ solves \eqref{eq100}.

We consider in this section the inverse problem of determining simultaneously the nonlinear term $d$ and the source term $F$ appearing in \eqref{eq100}. Similarly to the problem (IP), there will be a gauge invariance for this inverse problem. Indeed, fix $\varphi\in   C^{1+\frac{\alpha}{2},2+\alpha}([0,T]\times\overline{\Omega})$ satisfying \eqref{gauge1} and consider the map $U_\varphi$ mapping $ C^\infty(\R;C^{\frac{\alpha}{2},\alpha}([0,T]\times\overline{\Omega}))\times C^{\frac{\alpha}{2},\alpha}([0,T]\times\overline{\Omega})$ into itself and
defined by $U_\varphi (d,F)=(d_\varphi,F_\varphi)$ with 
\begin{align}\label{gauge3}
	\begin{split}
		d_\varphi(t,x,\mu)=&d(t,x,\mu+\varphi(t,x))-d(t,x,\varphi(t,x)),\\
		F_\varphi(t,x)=&F(t,x)-\rho(t,x) \partial_t \varphi(t,x)-\mathcal A(t)  \varphi(t,x)- d(t,x,\varphi(t,x)),
	\end{split}
\end{align}
for $ (t,x,\mu)\in Q\times\R$.
Then, one can easily check that $\mathcal M_{(d,F)}=\mathcal M_{U_\varphi(d,F)}$. Note that \eqref{gauge3} is equivalent to \eqref{gauge2} for
$$b(t,x,\mu)=d(t,x,\mu)-F(t,x),\quad (t,x,\mu)\in Q\times\R.$$
Following, this property, in general the best one can expect for our inverse problem is the determination of  $U_\varphi(d,F)$ from $\mathcal M_{(d,F)}$ for some $\varphi\in   C^{1+\frac{\alpha}{2},2+\alpha}([0,T]\times\overline{\Omega})$ satisfying \eqref{gauge1}. Our first result will be stated in that sense.

\begin{prop}\label{p6} Let $a:=(a_{ik})_{1 \leq i,k \leq n} \in C^\infty([0,T]\times\overline{\Omega};\R^{n \times n})$ satisfy \eqref{ell},  
	$\rho \in C^\infty([0,T]\times\overline{\Omega};\R_+)$ and, for $j=1,2$, let $d_j\in\mathbb A(\R;C^{\frac{\alpha}{2},\alpha}([0,T]\times\overline{\Omega}))\cap C^\infty([0,T]\times\overline{\Omega}\times\R)$ and $F_j\in C^\infty([0,T]\times\overline{\Omega})$ satisfy \eqref{F}-\eqref{d}  with $d=d_j$ and $F=F_j$. We assume also that there exists $f_0\in \mathcal K_0$ such that problem \eqref{eq100}, with $f=f_0$ and $d=d_j$, $F=F_j$, $j=1,2$, is  well-posed.
	Then,  the condition
	\begin{align}\label{p6a}
		\mathcal M_{(d_1,F_1)}=\mathcal M_{(d_2,F_2)} 
	\end{align}
	implies that there exists $\varphi\in   C^{1+\frac{\alpha}{2},2+\alpha}([0,T]\times\overline{\Omega})$ satisfying \eqref{gauge1} such that
	\begin{align}\label{p6b}
		(d_1,F_1)=U_\varphi(d_2,F_2),
	\end{align}
	where $U_\varphi$ is the map defined by \eqref{gauge3}.
\end{prop}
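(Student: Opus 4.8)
The plan is to reduce Proposition~\ref{p6} directly to Theorem~\ref{t1} through the substitution indicated immediately after \eqref{gauge3}, namely by setting
\[
b_j(t,x,\mu) := d_j(t,x,\mu)-F_j(t,x),\quad j=1,2.
\]
First I would check that each $b_j$ meets the hypotheses of Theorem~\ref{t1}. Since $F_j\in C^\infty([0,T]\times\overline{\Omega})\subset C^{\frac{\alpha}{2},\alpha}([0,T]\times\overline{\Omega})$ is independent of $\mu$, the map $\mu\mapsto b_j(\ccdot,\mu)$ inherits its analyticity from $d_j$, so $b_j\in\mathbb A(\R;C^{\frac{\alpha}{2},\alpha}([0,T]\times\overline{\Omega}))$. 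Condition \eqref{b} holds because $b_j(0,x,0)=d_j(0,x,0)-F_j(0,x)=0$ on $\partial\Omega$, using \eqref{d} and \eqref{F}.

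Next I would observe that the IBVP \eqref{eq100} with data $(d_j,F_j)$ coincides term by term with the IBVP \eqref{eq1} for the nonlinearity $b_j$, because $\rho\partial_t u+\mathcal A(t)u+d_j(t,x,u)=F_j$ is the same equation as $\rho\partial_t u+\mathcal A(t)u+b_j(t,x,u)=0$. Hence the two problems have identical solutions, and therefore identical Dirichlet-to-Neumann maps, $\mathcal M_{(d_j,F_j)}=\mathcal N_{b_j}$, on a common ball $\mathbb B(f_0,\epsilon)$ (taking $\epsilon$ to be the smaller of the two radii produced by Proposition~\ref{p1}). Thus the assumption \eqref{p6a} becomes $\mathcal N_{b_1}=\mathcal N_{b_2}$, and Theorem~\ref{t1} supplies a function $\varphi\in C^{1+\frac{\alpha}{2},2+\alpha}([0,T]\times\overline{\Omega})$ satisfying \eqref{gauge1} with $b_1=S_\varphi b_2$.

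The last step is to unpack $b_1=S_\varphi b_2$ into the two components of \eqref{p6b}. Written out, this identity reads
\[
d_1(t,x,\mu)-F_1(t,x)=d_2(t,x,\mu+\varphi(t,x))-F_2(t,x)+\rho(t,x)\partial_t\varphi(t,x)+\mathcal A(t)\varphi(t,x)
\]
for all $(t,x,\mu)\in Q\times\R$. Here the normalization \eqref{d} is what makes the splitting unambiguous: evaluating at $\mu=0$ and using $d_1(t,x,0)=0$ isolates
\[
F_1(t,x)=F_2(t,x)-\rho(t,x)\partial_t\varphi(t,x)-\mathcal A(t)\varphi(t,x)-d_2(t,x,\varphi(t,x)),
\]
which is exactly $F_\varphi$ from \eqref{gauge3}; subtracting this relation back from the previous display leaves $d_1(t,x,\mu)=d_2(t,x,\mu+\varphi(t,x))-d_2(t,x,\varphi(t,x))$, which is $d_\varphi$. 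Together these give \eqref{p6b}.

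I do not anticipate any serious obstacle: the substance of the argument is the algebraic correspondence between the gauge $S_\varphi$ acting on $b$ and the gauge $U_\varphi$ acting on the pair $(d,F)$. The only point genuinely requiring care is that a single relation $b_1=S_\varphi b_2$ determines a \emph{unique} pair $(d_1,F_1)=U_\varphi(d_2,F_2)$ rather than merely the difference $d_1-F_1$, and this uniqueness is exactly what the normalization $d_j(t,x,0)=0$ in \eqref{d} guarantees.
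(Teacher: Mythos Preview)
Your proposal is correct and follows exactly the same approach as the paper: set $b_j=d_j-F_j$, identify $\mathcal M_{(d_j,F_j)}$ with $\mathcal N_{b_j}$, apply Theorem~\ref{t1}, and then unpack $b_1=S_\varphi b_2$ into the pair identity \eqref{p6b}. Your write-up is in fact more detailed than the paper's (which just says \eqref{t1d} ``clearly implies'' \eqref{p6b}), and your explicit verification of \eqref{b} and the use of \eqref{d} to split the $\mu$-independent part is exactly the right justification.
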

\begin{proof} Fixing 
	$$b_j(t,x,\mu)=d_j(t,x,\mu)-F_j(t,x),\quad (t,x,\mu)\in Q\times\R$$
	one can check that $\mathcal N_{b_j}=\mathcal M_{(d_j,F_j)}$ and \eqref{p6a} implies \eqref{t1c}. Then, applying Theorem \ref{t1}, we deduce that \eqref{t1d} holds true which clearly implies \eqref{p6b}.\end{proof}

It is well known that when $\mu\mapsto d(t,x,\mu)$ is linear, $(t,x)\in Q$, there is no hope to determine general class of source terms $F\in C^\infty([0,T]\times\overline{\Omega})$ satisfying the condition of Proposition \ref{p6} from the knowledge of the map $\mathcal M_{(d,F)}$, see Example \ref{rmk: example} or e.g. \cite[Appendix A]{KSXY}. Nevertheless, this invariance breaks for several class of nonlinear terms $d$ for which we can prove the simultaneous determination of $d$ and $F$ from $\mathcal M_{(d,F)}$. More precisely, applying Corollary \ref{c1} and Theorems \ref{c3}, \ref{c4}, \ref{c5}, we can show the following.

\begin{corollary}\label{c6} Let the condition of Proposition \ref{p6} be fulfilled and assume that, for $j=1,2$, the nonlinear term $d_j$ satisfies one of the following conditions:
	\begin{itemize}
		\item[(i)] There exists $\kappa\in C^{\frac{\alpha}{2},\alpha}([0,T]\times\overline{\Omega})$ such that 
		$$ d_1(t,x,\kappa(t,x))-d_2(t,x,\kappa(t,x))=F_1(t,x)-F_2(t,x),\quad (t,x)\in [0,T]\times\overline{\Omega}.$$
		
		\item[(ii)] There exists $N_j\geq2$ such that
		$$d_j(t,x,\mu)=\sum_{k=1}^{N_j} d_{j,k}(t,x)\mu^k,\quad (t,x,\mu)\in[0,T]\times\overline{\Omega}\times\R,\ j=1,2.$$
		Moreover, for $N=\min(N_1,N_2)$ and $J$ a dense subset of $Q$, we have
		$$\min \left(|(d_{1,N-1}-d_{2,N-1})(t,x)|,\, \sum_{j=1}^2|(d_{j,N}-d_{j,N-1})(t,x)|\right)=0,\ (t,x)\in J,$$
		$$d_{1,N}(t,x)\neq0,\quad (t,x)\in J.$$
		
		\item[(iii)] There exists $h_j\in \mathbb A(\R;C^{\frac{\alpha}{2}}([0,T]))$ such that
		$$d_j(t,x,\mu)=q_{j}(t,x)h_j(t,\mu),\quad (t,x,\mu)\in[0,T]\times\overline{\Omega}\times\R,\ j=1,2.$$
		Assume also that,  for all $t\in(0,T)$,  there exist $\mu_t\in\R$ and $n_t\in\mathbb N$ such that
		$$\partial_\mu^{n_t} h_1(t,\ccdot)\not\equiv0,\quad\partial_\mu^{n_t} h_1(t,\mu_t)=0,\quad t\in(0,T).$$
		Moreover, we assume that 
		$$  q_{1}(t,x)\neq0,\quad  (t,x)\in Q$$
		and that for all $t\in(0,T)$ there exists $x_t\in\partial\Omega$ such that
		$$ q_{1}(t,x_t)=q_{2}(t,x_t)\neq0,\quad  t\in(0,T).$$
		
		\item[(iv)] There exists $h_j(t,\cdot)\in \mathbb A(\R;C^{\frac{\alpha}{2}}([0,T]))$ such that
		$$d_j(t,x,\mu)=q_{j,1}(t,x)h_j(t,q_{j,2}(t,x)\mu),\quad (t,x,\mu)\in[0,T]\times\overline{\Omega}\times\R,\ j=1,2.$$
		Assume also that,  for all $t\in(0,T)$,  there exists  $n_t\in\mathbb N$ such that
		$$\partial_\mu^{n_t} h_1(t,\ccdot)\not\equiv0,\quad\partial_\mu^{n_t} h_1(t,0)=0,\quad t\in(0,T).$$
		Moreover, we assume that 
		$$  q_{1,1}(t,x)\neq0,\quad q_{1,2}(t,x)\neq0\quad  (t,x)\in Q$$
		and that for all $t\in(0,T)$ there exists $x_t\in\partial\Omega$ such that
		$$ q_{1,1}(t,x_t)=q_{2,1}(t,x_t)\neq0,\quad q_{1,2}(t,x_t)=q_{2,2}(t,x_t)\neq0,\quad  t\in(0,T).$$
	\end{itemize}

	Then \eqref{p6a} implies that $d_1=d_2$ and $F_1=F_2$.
\end{corollary}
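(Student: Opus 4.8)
The plan is to reduce the simultaneous determination of the pair $(d_j,F_j)$ to the gauge-breaking results of Section \ref{sec_gau_breaking} by absorbing the source into the nonlinearity, exactly as in the proof of Proposition \ref{p6}. Setting
\[
b_j(t,x,\mu):=d_j(t,x,\mu)-F_j(t,x),\quad (t,x,\mu)\in Q\times\R,\ j=1,2,
\]
one has $\mathcal N_{b_j}=\mathcal M_{(d_j,F_j)}$, so the hypothesis \eqref{p6a} is precisely the coincidence \eqref{t1c} of DN maps for the equations with nonlinearities $b_1,b_2$. Hence in each of the four cases it suffices to produce the single equality $b_1=b_2$ on $Q\times\R$, after which the separation into $d$ and $F$ will be carried out using the normalization \eqref{d}.

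First I would run through the cases (i)--(iv), checking that each is, after the above substitution, verbatim the hypothesis of one of the earlier theorems. In case (i), subtracting $F_1,F_2$ turns the displayed identity into $b_1(t,x,\kappa)=b_2(t,x,\kappa)$, which is \eqref{c1a}, so Corollary \ref{c1} gives $b_1=b_2$. In case (ii), since $d_j(t,x,0)=0$ by \eqref{d}, the polynomial $b_j$ has the form \eqref{c3a} with $b_{j,k}=d_{j,k}$ for $k\geq1$ and $b_{j,0}=-F_j$; choosing $\omega=\Omega$ (so that $\Omega\setminus\overline{\omega}=\emptyset$ renders \eqref{c3d} vacuous) and observing that \eqref{c3b}--\eqref{c3c} only involve the top coefficients $b_{j,N}=d_{j,N}$ and $b_{j,N-1}=d_{j,N-1}$, Theorem \ref{c3} applies. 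In case (iii), $b_j$ has the form \eqref{c4a} with $b_{j,1}=q_j$ and $b_{j,0}=-F_j$, and the assumptions on $h_1,q_1,q_2$ are exactly \eqref{c4b}--\eqref{c4d}, so Theorem \ref{c4} applies. In case (iv), $b_j$ takes the form \eqref{c5a} with $b_{j,1}=q_{j,1}$, $b_{j,2}=q_{j,2}$, $b_{j,0}=-F_j$, matching \eqref{c5b}--\eqref{c5d}, so Theorem \ref{c5} applies. In every instance the conclusion is $b_1=b_2$.

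Finally I would split the recovered identity. From $b_1=b_2$ we obtain
\[
d_1(t,x,\mu)-F_1(t,x)=d_2(t,x,\mu)-F_2(t,x),\quad (t,x,\mu)\in Q\times\R.
\]
Evaluating at $\mu=0$ and using \eqref{d}, so that $d_1(t,x,0)=d_2(t,x,0)=0$, yields $F_1=F_2$ in $Q$, and substituting this back gives $d_1=d_2$. The genuinely new content of the corollary beyond the cited theorems is precisely this separation step, which is what upgrades the determination of $b$ to the simultaneous determination of $(d,F)$. The one place requiring care, and the step I expect to be the main obstacle, is the bookkeeping in case (ii): the source occupies the constant slot $b_{j,0}=-F_j$, and Theorem \ref{c3} imposes the matching condition \eqref{c3d} on that slot over $(0,T)\times(\Omega\setminus\overline{\omega})$; since no agreement of $F_1,F_2$ is assumed a priori, it is essential to take $\omega=\Omega$ so that \eqref{c3d} becomes empty, and the agreement $F_1=F_2$ is then recovered only a posteriori from the $\mu=0$ evaluation above.
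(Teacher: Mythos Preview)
Your proof is correct and follows the same approach as the paper: absorb the source into the nonlinearity via $b_j=d_j-F_j$, invoke Corollary \ref{c1} and Theorems \ref{c3}--\ref{c5} in the four cases to get $b_1=b_2$, and then separate using $\mu=0$ and \eqref{d}. Your writeup is in fact more careful than the paper's, which simply cites the four results without spelling out the identifications; in particular your observation that one must take $\omega=\Omega$ in case (ii) to render \eqref{c3d} vacuous is a point the paper's proof leaves implicit.
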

\begin{proof} Fixing 
	$$b_j(t,x,\mu)=d_j(t,x,\mu)-F_j(t,x),\quad (t,x,\mu)\in Q\times\R$$
	one can check that $\mathcal N_{b_j}=\mathcal M_{(d_j,F_j)}$ and \eqref{p6a} implies \eqref{t1c}. Then, applying Corollary \ref{c1} and Theorem \ref{c3}, \ref{c4}, \ref{c5}, we deduce that for semilinear terms $d_j$ satisfying one of the conditions (i), (ii), (iii), (iv) we have
	$$d_1(t,x,\mu)-F_1(t,x)=b_1(t,x,\mu)=b_2(t,x,\mu)=d_2(t,x,\mu)-F_2(t,x),\quad (t,x,\mu)\in Q\times\R.$$
	Choosing $\mu=0$ in the above identity and applying \eqref{d}, we deduce that $F_1=F_2$ and then $d_1=d_2$.\end{proof}

\appendix

\section{Carleman estimates}\label{sec_app}

In the end of this paper, we prove the Carleman estimate in Section \ref{sec_proof thm 1.2}. For the sake of convenience, we also state the result as follows.

\begin{lem} Let $n\geq3$, $q\in L^\infty(Q)$ and $v\in H^1(Q)\cap L^2(0,T;H^2(\Omega))$ satisfy the condition 
	\begin{equation}\label{eq:v_bndr_cond_proof}
		v|_{\Sigma}=0,\quad v|_{t=0}=0.
	\end{equation}
	Then, there exists $\tau_0>0$ depending on $T$, $\Omega$ and $\norm{q}_{L^\infty(Q)}$ such that  for all $\tau>\tau_0$ the following estimate 
	\begin{align}\label{eq:carl_Estim_proof} \begin{split}
			&\tau\int_0^T\int_{\Gamma_{+}(x_0)}e^{-2(\tau^2t+\tau\psi(x))}\abs{\partial_\nu v}^2\abs{\partial_\nu\psi(x) } d\sigma(x)dt+\tau^2\int_Qe^{-2(\tau^2t+\tau\psi(x))}\abs{v}^2dxdt\\
			\leq & C\left(\int_Qe^{-2(\tau^2t+\tau\psi(x))}\abs{(\partial_t-\Delta_x+q)v}^2\, dxdt\right. \\
			& \qquad \left. +\tau\int_0^T\int_{\Gamma_{-}(x_0)}e^{-2(\tau^2t+\tau\psi(x))}\abs{\partial_\nu v}^2\abs{\partial_\nu\psi(x) }d\sigma(x)\, dt\right)
	\end{split}\end{align}
	holds true.
\end{lem}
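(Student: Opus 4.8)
The plan is to establish \eqref{eq:carl_Estim_proof} by the standard conjugation-and-integration-by-parts method for Carleman estimates, reducing first to the case $q=0$ and then restoring the zeroth order term as a perturbation. Indeed, once the estimate is known for $\partial_t-\Delta_x$, writing $(\partial_t-\Delta_x)v=(\partial_t-\Delta_x+q)v-qv$ and using $\int_Q e^{-2(\tau^2t+\tau\psi)}\abs{qv}^2\,dxdt\leq\norm{q}_{L^\infty(Q)}^2\int_Q e^{-2(\tau^2t+\tau\psi)}\abs{v}^2\,dxdt$, the extra term is absorbed by the bulk term $\tau^2\int_Q e^{-2(\tau^2t+\tau\psi)}\abs{v}^2\,dxdt$ on the left, provided $\tau_0$ is chosen large depending on $\norm{q}_{L^\infty(Q)}$. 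This accounts for the stated dependence of $\tau_0$.

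For the principal part, I would set $\phi(t,x)=\tau^2t+\tau\psi(x)$ and $w=e^{-\phi}v$, so that $w|_\Sigma=0$, $w|_{t=0}=0$, and on $\Sigma$ one has $\partial_\nu v=e^{\phi}\partial_\nu w$, which turns the weighted boundary integrals of $\abs{\partial_\nu v}^2$ into unweighted integrals of $\abs{\partial_\nu w}^2$. Computing the conjugated operator $L_\phi:=e^{-\phi}(\partial_t-\Delta_x)e^{\phi}$ and using $\partial_t\phi=\tau^2$, $\abs{\nabla_x\phi}^2=\tau^2\abs{\nabla_x\psi}^2=\tau^2$ (the eikonal equation \eqref{eikonal psi}) and $\Delta_x\phi=\tau\Delta_x\psi$, I obtain $L_\phi w=\partial_t w-\Delta_x w-2\tau\nabla_x\psi\cdot\nabla_x w-\tau(\Delta_x\psi)w$; the two contributions of order $\tau^2 w$ cancel, which is precisely the purpose of the time weight $\tau^2 t$.

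Next I would split $L_\phi=P_s+P_a$ into a formally self-adjoint part $P_s w=-\Delta_x w$ and a skew-adjoint part $P_a w=\partial_t w-2\tau\nabla_x\psi\cdot\nabla_x w-\tau(\Delta_x\psi)w$ (relative to $L^2(Q)$ with the conditions $w|_\Sigma=0$, $w|_{t=0}=0$), and expand $\norm{L_\phi w}_{L^2(Q)}^2=\norm{P_s w}_{L^2(Q)}^2+\norm{P_a w}_{L^2(Q)}^2+2\langle P_s w,P_a w\rangle_{L^2(Q)}$. The cross term is treated through a Rellich--Pohozaev identity with the vector field $\nabla_x\psi$: integrating $\int_Q(\Delta_x w)(\nabla_x\psi\cdot\nabla_x w)\,dxdt$ by parts and using $\nabla_x w=(\partial_\nu w)\nu$ on $\Sigma$ produces the boundary term $2\tau\int_\Sigma(\partial_\nu\psi)\abs{\partial_\nu w}^2$, whose sign is governed by $\partial_\nu\psi(x)=(x-x_0)\cdot\nu(x)/\abs{x-x_0}$; this splits exactly into the nonnegative contribution over $\Gamma_+(x_0)$, kept on the left, and that over $\Gamma_-(x_0)$, moved to the right. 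The interior contributions involve the Hessian of $\psi$: since $\psi=\abs{x-x_0}$ has $\Delta_x\psi=(n-1)/r$ and $\nabla_x^2\psi=\tfrac1r(\mathrm{Id}-\hat r\otimes\hat r)$ with $r=\abs{x-x_0}$ and $\hat r=(x-x_0)/r$, the relevant combination $\tfrac12(\Delta_x\psi)\abs{\nabla_x w}^2-\nabla_x^2\psi(\nabla_x w,\nabla_x w)=\tfrac{n-3}{2r}\abs{\nabla_x w}^2+\tfrac1r\abs{\hat r\cdot\nabla_x w}^2$ is nonnegative exactly when $n\geq3$, which is where the dimensional restriction enters.

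Finally, to produce the bulk term $\tau^2\int_Q\abs{w}^2$ I would retain the positive square $\norm{P_a w}^2$, which contains $4\tau^2\int_Q\abs{\nabla_x\psi\cdot\nabla_x w}^2=4\tau^2\int_Q\abs{\hat r\cdot\nabla_x w}^2$, and convert this into control of $\tau^2\int_Q\abs{w}^2$ via a radial Poincar\'e inequality: since $x_0\notin\overline{\Omega}$ and $w$ vanishes on $\partial\Omega$, integrating $w$ along the rays issuing from $x_0$ yields $\int_Q\abs{w}^2\leq C\int_Q\abs{\hat r\cdot\nabla_x w}^2$ with $C$ depending only on $\Omega$. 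Assembling the nonnegative interior gradient terms, the radial Poincar\'e bound, the boundary integral over $\Gamma_+(x_0)$, and fixing $\tau_0$ large to absorb the lower order terms and to dominate the $O(\tau^2)$ wrong-signed $\abs{w}^2$ contribution of $\norm{P_a w}^2$ (which carries the bounded factor $1/r^2$), proves the estimate for $w$; undoing the conjugation restores the weights $e^{-2(\tau^2t+\tau\psi)}$, the factor $\abs{\partial_\nu\psi}$ on the boundary, and the term $q$. The hard part will be exactly this last balancing: ensuring that the net coefficient of $\tau^2\int_Q\abs{w}^2$ is positive requires reconciling the self-adjoint/skew-adjoint cross term against the wrong-signed interior contributions, and it is here that both the convexity of the Euclidean distance (through $n\geq3$) and the precise form of the time weight are indispensable.
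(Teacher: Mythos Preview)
Your reduction to $q=0$, the conjugation, and the Rellich--Pohozaev computation for the boundary term are all correct and match the paper. The genuine gap is in how you propose to obtain the bulk term $\tau^2\int_Q e^{-2(\tau^2t+\tau\psi)}\abs{v}^2$. Because $\psi(x)=|x-x_0|$ satisfies $|\nabla\psi|=1$, the two order-$\tau^2$ contributions in the conjugated operator cancel (as you note), so $L_\phi$ contains no $\tau^2$ zeroth-order term. In your decomposition $P_s=-\Delta$ carries no $\tau$, hence the cross term $2\langle P_s w,P_a w\rangle$ contributes nothing at order $\tau^2$ in $\int_Q|w|^2$. Your plan is therefore to extract $4\tau^2\int_Q|\hat r\cdot\nabla w|^2$ from $\|P_a w\|^2$ and convert it by a radial Poincar\'e inequality. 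But $\|P_a w\|^2$ also contains the mixed term $-4\tau\int_Q\partial_t w\,(\hat r\cdot\nabla w)$, which is not an exact derivative and cannot be controlled without sacrificing either all of $\int_Q|\partial_t w|^2$ or all of the good radial term; you have no independent source of $\int_Q|\partial_t w|^2$. More to the point, your final sentence proposes to ``fix $\tau_0$ large'' to dominate an $O(\tau^2)$ wrong-signed contribution by an $O(\tau^2)$ good one: this is impossible, since both scale identically in $\tau$ and the competition is between fixed geometric constants.

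The paper resolves this by \emph{convexifying} the spatial weight: it conjugates by $e^{-(\tau^2t+\tau\psi-\frac{s}{2}\psi^2)}$ with an auxiliary parameter $s>0$. The extra $-\frac{s}{2}\psi^2$ puts a new zeroth-order piece $2s\tau\psi$ into the ``plus'' part of the split operator, and its cross term with $-2\tau\nabla\psi\cdot\nabla$ yields, after integrating by parts and using $\nabla\psi\cdot\nabla(\psi w^2)$, the clean contribution $4ns\tau^2\int_Q w^2$. This is where the $\tau^2$ bulk term actually comes from, and it relies only on $\psi\nabla\psi=x-x_0$ and $w|_\Sigma=0$; the role of $n\ge3$ is then exactly the one you identified, to keep the Rellich-type gradient terms nonnegative. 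Without the convexification (or some equivalent device producing a genuine $\tau^2$ zeroth-order term in the conjugated operator), the balancing you describe cannot be closed.
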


\begin{proof}
	Recall that $\psi(x)=|x-x_0|$, $x\in\Omega$, for $x_0\in\R^n\setminus\overline{\Omega}$, then $\psi$ satisfies the eikonal equation $\left|\nabla_x \psi(x)\right|=1$   for $x\in \Omega$. 
	Without loss of generality we assume that $u$ is real valued and $q=0$.
	In order to prove the estimate \eqref{eq:carl_Estim_proof}, we fix $v\in C^2(\overline{Q})$ satisfying \eqref{eq:v_bndr_cond_proof}, $s>0$ and  we set 
	$$
	w=e^{-(\tau^2t+\tau\psi(x)-s\frac{\psi(x)^2}{2})}v
	$$   
	in such a way that
	\begin{equation}\label{l7c}e^{-(\tau^2t+\tau\psi(x)-s\frac{\psi(x)^2}{2})}(\pd_t-\Delta_x) v=P_{\tau,s}w,\end{equation}
	where   $P_{\tau,s}$ is given by
	$$P_{\tau,s}=\pd_t-\Delta +2s\tau\psi-\tau \Delta \psi+s\psi\Delta\psi-s^2\psi^2+s-2\tau\nabla\psi\cdot\nabla+2s\psi \nabla\psi\cdot\nabla.$$
	Here we used the fact that the function $\psi$ satisfies the eikonal equation.

	We next decompose $P_{\tau,s}$ into two parts $P_{\tau,s}=P_{\tau,s,+}+P_{\tau,s,-}$ with
	\begin{align*}
		P_{\tau,s,+}:=&-\Delta +2s\tau\psi-\tau \Delta \psi +s\psi \Delta\psi-s^2\psi^2+s   ,\\
		P_{\tau,s,-}:=&\pd_t-2\tau\nabla\psi\cdot\nabla+2s\psi \nabla\psi\cdot\nabla.
	\end{align*}
	Then, it follows that
	\begin{equation}\label{l7d} 
		\begin{split}
			\norm{P_{\tau,s}w}_{L^2(Q)}^2\geq &2\int_Q \left( P_{\tau,s,+}w\right) \left( P_{\tau,s,-}w\right) dxdt\\
			:=&I+II+III+IV+V+VI+VII,
		\end{split}
	\end{equation}
	where  
	$$I=-2\int_Q\pd_tw\Delta w\, dxdt,\quad II=4\tau\int_Q\Delta w\nabla\psi\cdot\nabla w\, dxdt,$$
	$$III=4s\tau\int_Q\psi w \pd_tw \, dxdt,\quad IV=-8s\tau^2\int_Q\psi w\nabla\psi\cdot\nabla w\, dxdt,$$ 
	$$
	V=-4s \int_{Q} \Delta w \psi \nabla \psi \cdot \nabla w \, dxdt, \quad VI=8s^2 \tau\int_{Q} \psi^2  w\nabla \psi \cdot \nabla w \, dxdt,
	$$
	and 
	$$VII= 2\int_Q\left[-\tau\Delta \psi+s\psi\Delta\psi-s^2\psi^2+s\right]w \left( P_{\tau,s,-}w\right) dxdt.$$

	Recalling that $w|_\Sigma=0$ and $w|_{t=0}=0$, fixing 
	$$c_*=\inf_{x\in\Omega}\psi(x)>0$$
	and integrating by parts, we find 
	\begin{align*}
		\begin{split}
			I&=2\int_Q\pd_t\nabla w\cdot\nabla w\, dxdt=\int_Q\pd_t|\nabla w|^2\, dxdt=\int_\Omega |\nabla w(T,x)|^2\, dx\geq0, \\
			III&=2s\tau\int_Q\psi\pd_t(w^2)\, dxdt=2s\tau\int_\Omega \psi(x) w(T,x)^2\, dx\geq 2c_*s\tau\int_\Omega  w(T,x)^2\, dx\geq0, \\
			IV&=-4s\tau^2\int_Q(x-x_0)\cdot\nabla (w^2)\,dxdt =4s\tau^2\int_Q\textrm{div}(x-x_0) w^2\, dxdt\\
			&=4ns\tau^2\int_Qw^2\, dxdt\geq0,
		\end{split}
	\end{align*}
	and similarly,
	$$
	VI=-4s^2 \tau \int_{Q} \textrm{div}(\psi^2 \nabla \psi) w^2 \, dxdt = -4(n+1)s^2 \tau \int_Q |x-x_0| w^2 \, dxdt,
	$$
	where we utilized the fact that $\psi^2 \nabla \psi = |x-x_0|(x-x_0)$. 

	Now, let us consider $II$. Integrating by parts and using the fact that $w|_\Sigma=0$, we get
	$$\begin{aligned}II&=4\tau\int_\Sigma \pd_\nu w\nabla\psi\cdot\nabla w\, d\sigma(x)dt-4\tau\int_Q\nabla w\cdot\nabla(\nabla\psi\cdot\nabla w)\, dxdt\\
		&=4\tau\int_\Sigma (\pd_\nu w)^2\pd_\nu\psi \, d\sigma(x)dt-4\tau\int_Q D^2\psi(\nabla w,\nabla w)\, dxdt-2\tau \int_Q\nabla\psi\cdot\nabla (|\nabla w|^2)\, dxdt\\
		&=2\tau\int_\Sigma (\pd_\nu w)^2\pd_\nu\psi \, d\sigma(x) dt-4\tau\int_Q D^2\psi(\nabla w,\nabla w)\, dxdt+2\tau \int_Q\Delta\psi|\nabla w|^2\, dxdt\\
		&=2\tau\int_\Sigma (\pd_\nu w)^2\pd_\nu\psi \, d\sigma(x)dt-4\tau\int_Q D^2\psi(\nabla w,\nabla w)\, dxdt+2\tau \int_Q\frac{n-1}{|x-x_0|}|\nabla w|^2\, dxdt.\end{aligned}$$
	On the other hand, one can check that $$D^2\psi(x)=|x-x_0|^{-3}\left(|x-x_0|^2\mathrm{Id}_{\R^{n\times n}}-N(x)\right),\quad x\in\Omega$$
	with $N(x)=\left((x_i-x_0^i)(x_j-x_0^j)\right)_{1\leq i,j\leq n}$ where $x=(x_1,\ldots,x_n)$ and $x_0=(x_0^1,\ldots,x_0^n)$. Moreover, it can be proved that $N(x)$ is a symmetric matrix whose eigenvalues are either $0$ or $|x-x_0|^2$. 
	Thus, we get
	$$0\leq D^2\psi(x)(\nabla w(t,x),\nabla w(t,x))\leq \frac{|\nabla w(t,x)|^2}{|x-x_0|},\quad \text{ for }(t,x)\in Q$$
	and it follows that
	\begin{align}\label{II estimate}
		\begin{split}
			II&\geq 2\tau\int_\Sigma (\pd_\nu w)^2\pd_\nu\psi \, d\sigma(x)dt+\tau \int_Q\frac{2(n-3)}{|x-x_0|}|\nabla w|^2\, dxdt\\
			&\geq 2\tau\int_\Sigma (\pd_\nu w)^2\pd_\nu\psi \, d\sigma(x)+c_1(n-3)\tau \int_Q|\nabla w|^2\, dxdt,
		\end{split}
	\end{align}
	where $c_1=\inf_{x\in\Omega}\frac{2}{|x-x_0|}$.

	Applying similar computations for $V$, one has that 
	\begin{align}\label{V estimate}
		\begin{split}
			V=&-2s \int_{Q} \Delta w  \nabla \psi^2 \cdot \nabla w \, dxdt \\
			=&-2s\int_\Sigma \pd_\nu w\nabla\psi^2\cdot\nabla w\, d\sigma(x)dt +2s\int_Q\nabla w\cdot\nabla(\nabla\psi^2\cdot\nabla w)\, dxdt\\
			=&-2s\int_\Sigma (\pd_\nu w)^2\pd_\nu\psi^2 \, d\sigma(x)dt+2s\int_Q D^2\psi^2(\nabla w,\nabla w)\, dxdt +s\int_Q\nabla\psi^2\cdot\nabla (|\nabla w|^2)\, dxdt\\
			=&-s\int_\Sigma (\pd_\nu w)^2\pd_\nu\psi^2 \, d\sigma(x) dt+2s\int_Q D^2\psi^2(\nabla w,\nabla w)\, dxdt- s \int_Q\Delta\psi^2|\nabla w|^2\, dxdt\\
			=&-s\int_\Sigma (\pd_\nu w)^2\pd_\nu\psi^2 \, d\sigma(x)dt+2s\int_Q D^2\psi^2 (\nabla w,\nabla w)\, dxdt-sn\int_Q|\nabla w|^2\, dxdt,\\
			\geq &\underbrace{-s\int_0^T\int_{\Gamma_{+}(x_0)} 2\psi(\pd_\nu w)^2\pd_\nu\psi \, d\sigma(x)dt+s(4-n)\int_Q|\nabla w|^2\, dxdt}_{\text{Here we use }\Delta \psi^2 = \Delta |x-x_0|^2 =n \text{ and }D^2\psi^2(\nabla w, \nabla w) = 2 |\nabla w|^2.}.
		\end{split}
	\end{align} 
	In addition, the sum of the last two terms in the right hand side of \eqref{II estimate} and \eqref{V estimate} is 
	\begin{align}\label{II+V grad estimate}
		\begin{cases}
			s\int_Q|\nabla w|^2\, dxdt  &\text{ for }n=3,\\
			c_1(n-3)\tau \int_Q|\nabla w|^2\, dxdt+s(4-n)\int_Q|\nabla w|^2\, dxdt & \text{ for }n\geq 4.
		\end{cases}
	\end{align}
	By choosing $\tau  \geq \frac{s(4-n)}{c_1(n-3)}$ for $n\geq 4$, both cases appear in \eqref{II+V grad estimate} are nonnegative.   
	Now fixing $c_2=2\sup_{x\in\partial\Omega}\psi(x)$, and choosing 
	\begin{align*}
		\tau >\tau_1(s):=\begin{cases}
			c_2 s &\text{ for }n=3,\\
			\max\left( c_2s,\frac{s(4-n)}{c_1(n-3)} \right) &\text{ for }n \geq 4,
		\end{cases}
	\end{align*}
	we can deduce that
	$$II+V\geq \tau\int_0^T\int_{\Gamma_{+}(x_0)} (\pd_\nu w)^2|\pd_\nu\psi| \, d\sigma(x)dt-2\tau\int_0^T\int_{\Gamma_{-}(x_0)} (\pd_\nu w)^2|\pd_\nu\psi| \, d\sigma(x)dt .$$

	In addition, repeating the above argumentation, it is not hard to check that there exists a constant $c_0>0$ independent of $s$ and $\tau$ such that
	$$VI+VII\geq -c_0\left((\tau^2+s^2\tau+s^2)\int_Qw^2\, dxdt+(\tau+s^2)\int_\Omega  w(T,x)^2\, dx\right).$$
	Thus, choosing $s=c_0(1+c_*^{-1})+1$, $\tau_0=\max\left( \tau_1(s),\frac{sn}{c_1},3s^2+1, \frac{sc_0}{c_*}\right)$ and applying the above estimates, for all $\tau>\tau_0$, we obtain
	\begin{multline*}
		\norm{P_{\tau,s}w}_{L^2(Q)}^2
		\geq2\int_QP_{\tau,s,+}wP_{\tau,s,-}w\, dxdt
		\\
		\geq c_1\tau^2\int_Qw^2\, dxdt+\tau\int_0^T\int_{\Gamma_{+}(x_0)} (\pd_\nu w)^2|\pd_\nu\psi| \, d\sigma(x)dt-2\tau\int_0^T\int_{\Gamma_{-}(x_0)} (\pd_\nu w)^2|\pd_\nu\psi| \, d\sigma(x)dt,
	\end{multline*}
	with $c_1>0$ depending only on $\Omega$. From this last estimate and the fact that 
	$$\pd_\nu\psi(x)=\frac{(x-x_0)\cdot\nu}{|x-x_0|},\quad x\in\partial\Omega,$$
	we deduce easily \eqref{eq:carl_Estim_proof}.\end{proof}

\medskip 

\noindent\textbf{Acknowledgment.} T.L. was supported by the Academy of Finland (Centre of
Excellence in Inverse Modeling and Imaging, grant numbers 284715 and 309963). 
Y.-H. Lin is partially supported by the National Science and Technology Council (NSTC) Taiwan, under the projects 111-2628-M-A49-002 and 112-2628-M-A49-003. 

\bibliography{refs} 

\bibliographystyle{alpha}

\end{document}